\tikzstyle{edge}=[very thick]
\definecolor{bostonuniversityred}{rgb}{0.8, 0.0, 0.0}
\definecolor{arsenic}{rgb}{0.23, 0.27, 0.29}
\tikzstyle{diredge}=[postaction={decorate,decoration={markings,
\newcommand{\defPt}[3]{
	\def \pt {(#1, #2)}
	\coordinate [at = \pt, name = #3];
}
\tikzset{
   conn/.pic={
     \defPt{0.2}{-0.5}{q0}
     \defPt{-1}{-1.5}{q5}
    \defPt{1}{1.2}{q1}
    \defPt{1}{2.7}{q6}
    \defPt{1.25}{-1.2}{q2}
    \defPt{2.5}{0.6}{q3}
    \defPt{2.5}{-0.6}{q4}
  
        \draw[line width=1 pt] (q0) -- (q1) -- (q3) -- (q4);
        \draw[line width=1 pt] (q2) -- (q3);
        \draw[line width=1 pt] (q0) -- (q5);
        \draw[line width=1 pt] (q1) -- (q6);
  }
}
\newcommand{\fitellipsis}[3] 
{\draw []let \p1=(#1), \p2=(#2), \n1={atan2(\y2-\y1,\x2-\x1)}, \n2={veclen(\y2-\y1,\x2-\x1)}
    in ($ (\p1)!0.5!(\p2) $) ellipse [ x radius=\n2/2+0.3cm+#3cm, y radius=#3cm, rotate=\n1];
}
\newcommand{\fitellipsiss}[3] 
{\draw [fill=white]let \p1=(#1), \p2=(#2), \n1={atan2(\y2-\y1,\x2-\x1)}, \n2={veclen(\y2-\y1,\x2-\x1)}
    in ($ (\p1)!0.5!(\p2) $) ellipse [ x radius=\n2/2+#3cm, y radius=#3cm, rotate=\n1];
}
\newcommand{\fitellipsisss}[3] 
{\draw []let \p1=(#1), \p2=(#2), \n1={atan2(\y2-\y1,\x2-\x1)}, \n2={veclen(\y2-\y1,\x2-\x1)}
    in ($ (\p1)!0.5!(\p2) $) ellipse [ x radius=\n2/2+#3cm, y radius=#3cm, rotate=\n1];
}
\theoremstyle{plain}
\newtheorem*{thm*}{Theorem}
\newtheorem{thm}{Theorem}
\Crefname{thm}{Theorem}{Theorems}
\numberwithin{thm}{section}
\newtheorem*{lem*}{Lemma}
\newtheorem{lem}[thm]{Lemma}
\Crefname{lem}{Lemma}{Lemmas}
\newtheorem*{claim*}{Claim}
\Crefname{claim}{Claim}{Claims}
\Crefname{claim}{Claim}{Claims}
\Crefname{prop}{Proposition}{Propositions}
\newtheorem{cor}[thm]{Corollary}
\Crefname{cor}{Corollary}{Corollaries}
\Crefname{conj}{Conjecture}{Conjectures}
\Crefname{qn}{Question}{Questions}
\Crefname{obs}{Observation}{Observations}
\Crefname{ex}{Example}{Examples}
\theoremstyle{definition}
\Crefname{prob}{Problem}{Problems}
\Crefname{defn}{Definition}{Definitions}
\newtheorem*{defn*}{Definition}
\theoremstyle{remark}
\renewenvironment{proof}[1][]{\begin{trivlist}
\item[\hspace{\labelsep}{\bf\noindent Proof#1.\/}] }{\qed\end{trivlist}}
\newenvironment{cla_proof}[1][]{\begin{trivlist}
\item[\hspace{\labelsep}{\noindent \emph{Proof#1.}\/}] }{\qed\end{trivlist}}
\newcommand{\ceil}[1]{
    \left\lceil #1 \right\rceil
}
\newcommand{\floor}[1]{
    \left\lfloor #1 \right\rfloor
}
\newcommand{\eps}{\varepsilon}
\def\expandafter\normalsize\expandafter{%
    \normalsize
    \setlength\abovedisplayskip{8pt}
    \setlength\belowdisplayskip{8pt}
    \setlength\abovedisplayshortskip{4pt}
    \setlength\belowdisplayshortskip{4pt}
}
 \setlist[itemize]{leftmargin=*}
\DeclareFontFamily{OT1}{pzc}{}
\DeclareFontShape{OT1}{pzc}{m}{it}{<-> s * [1.10] pzcmi7t}{}
\DeclareMathAlphabet{\mathpzc}{OT1}{pzc}{m}{it}
\newcommand{\R}{\ensuremath{\mathbb{R}}}
\newcommand{\Ce}{\ensuremath{\mathscr{C}}}
\newcommand{\one}{\ensuremath{\mathbbm{1}}}
\newcommand{\bangle}[1]{\left\langle #1 \right\rangle}
\newcommand{\inprod}[2]{\bangle{#1, #2}}
\DeclareMathOperator{\rk}{rank}
\DeclareMathOperator{\tr}{tr}
\DeclareMathOperator{\MC}{MC}
\title{\vspace{-0.8cm} 
Equiangular lines via improved eigenvalue multiplicity}
\author{Igor Balla\thanks{Department of Mathematics, Masaryk University -- Brno, Czech Republic. Email: \href{mailto:iballa1990@gmail.com} {\nolinkurl{iballa1990@gmail.com}}. Research supported by the MUNI Award in Science and Humanities (MUNI/I/1677/2018).}
\and
Matija Buci\'c\thanks{Department of Mathematics, Princeton University -- Princeton, USA. Email: \href{mailto:mb5225@princeton.edu} {\nolinkurl{mb5225@princeton.edu}}. Research supported in part by an NSF Grant DMS--2349013.}
}
 \date{}
\begin{document}

\maketitle

\vspace{-0.5cm}
\begin{abstract}
A family of lines passing through the origin in an inner product space is said to be \emph{equiangular} if every pair of lines defines the same angle. In 1973, Lemmens and Seidel raised what has since become a central question in the study of equiangular lines in Euclidean spaces. 
They asked for the maximum number of equiangular lines in $\mathbb{R}^r$ with a common angle of $\arccos{\frac{1}{2k-1}}$ for any 
integer $k \geq 2$. We show that the answer equals $r-1+\floor{\frac{r-1}{k-1}},$ provided that $r$ is at least exponential in a polynomial in $k$. This improves upon a recent breakthrough of Jiang, Tidor, Yao, Zhang, and Zhao~[Ann.\ of Math.\ (2) 194 (2021), no. 3, 729--743], who showed that this holds for $r$ at least doubly exponential in a polynomial in $k$. We also show that for any common angle $\arccos{\alpha}$, the answer equals $r+o(r)$ already when $r$ is superpolynomial in $1/\alpha \to \infty$. 

The key new ingredient underlying our results is an improved upper bound on the multiplicity of the second-largest eigenvalue of a graph. In one of the regimes, this improves and significantly extends a result of McKenzie, Rasmussen, and Srivastava [STOC 2021, pp. 396--407]. 
\end{abstract} 

\section{Introduction} 

\subsection{Equiangular lines}
A family of lines passing through the origin in an inner product space is said to be \emph{equiangular} if every pair defines the same angle. The study of large families of equiangular lines dates back to 1947 and it has since found numerous interesting connections and applications in a wide variety of different areas. These include elliptic geometry \cite{HS47, B53}, polytope theory \cite{C73}, frame theory \cite{HS03, CKP13, HC17}, Banach spaces \cite{B19, KLL83, FS17, DL23}, and quantum information theory \cite{Z11, RBSC04, FHS17, FS19, CFS02, FS03} with connections to algebraic number theory and Hilbert's twelfth problem \cite{B17, AFMY17, AFMY20, ABGHM22}. 

Determining the maximum number of equiangular lines in $\R^r$ is widely considered as one of the founding problems of algebraic graph theory \cite{GR01} and features in most introductory courses on the subject. The well-known absolute bound (see e.g.\ \cite{LS73}) states that the answer is at most $\binom{r+1}{2}$, which is tight up to a multiplicative constant and it is a difficult open problem to determine whether there are infinitely many $r$ for which it holds with equality.

The more refined version of this question, where we additionally specify the common angle between the lines, was first posed by Lemmens and Seidel \cite{LS73} in 1973. 
Given $\alpha \in [0,1)$, we will refer to a family of lines passing through the origin as \emph{$\alpha$-equiangular} if the acute angle between any two lines equals $\arccos{\alpha}$. Lemmens and Seidel also gave a good partial answer to their question when $r \leq 1/\alpha^2 - 2$, but until recently, our understanding of the complementary regime $r \geq 1/\alpha^2 - 2$ has been much more limited. 
A series of recent breakthroughs have led to an almost complete solution to this problem when $\alpha$ is fixed and $r$ is sufficiently large. The first result in this series was due to Bukh \cite{B16}, who showed that the answer is $\Theta(r)$ when $r$ is at least exponential in $1/\alpha^2$. Afterward, the first author together with Dr\"{a}xler, Keevash, and Sudakov \cite{igor-felix-paper} determined that the maximum over all $\alpha$ is $2r-2$ in this exponential regime, with equality if and only if $\alpha = 1/3$. In particular, they developed a framework which opened the way for subsequent works to answer this question for specific $\alpha$. Indeed, further improvements for certain $\alpha$ were then obtained by Jiang and Polyanskii \cite{JP20} and in a remarkable work, Jiang, Tidor, Yao, Zhang, and Zhao \cite{yufei-paper} determined the answer for any $\alpha$ and $r$ sufficiently large, provided that there exists a finite graph with largest eigenvalue equal to $\frac{1-\alpha}{2\alpha}$.
If $k$ denotes the smallest order of such a graph (called the spectral radius order of $\frac{1-\alpha}{2\alpha}$), their approach requires $r$ to be at least doubly exponential in $k/\alpha$ and naturally, they asked how large $r$ needs to be for their result to hold. See also a recent survey by Zhao on the topic \cite{notices}.

In the subexponential regime, besides the aforementioned bound of Lemmens and Seidel from 1973, the only upper bounds previous known for general $\alpha$ were obtained by Yu \cite{Y17} and Glazyrin and Yu \cite{GY18} using semidefinite programming and properties of Gegenbauer polynomials. In particular, Yu's bound gives the right order of magnitude for infinitely many $\alpha$ and it is tight whenever the absolute bound is met in dimension $1/\alpha^2 - 2$, but it only applies when $r = \Theta(1/\alpha^2)$. In \cite{igor-paper}, the second author extended the bound of Yu to all $r \le r_{\alpha} \approx \frac{1}{4\alpha^4}$ and showed that the linear bound of \cite{igor-felix-paper} holds for all $r\ge r_{\alpha}$, thereby extending it from the exponential to the polynomial regime.

Our first result determines the answer up to lower order terms provided $r$ is superpolynomial in $1/\alpha \to \infty$.
\begin{thm}\label{thm:superpolynomial}
    If $\alpha \rightarrow 0$ and $r\ge {1/\alpha^{\omega(1)}}$, then the maximum number of $\alpha$-equiangular lines in $\mathbb{R}^r$ is
$$r+o(r).$$
\end{thm}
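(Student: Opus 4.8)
Here is how I would approach the theorem. The content is almost entirely in the upper bound, the lower bound being elementary.

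My plan is to prove a matching lower and upper bound. The lower bound is immediate: for any $\alpha\in(0,1)$ and any $r\ge 2$, the vectors $v_i=\sqrt{1-\alpha}\,e_i+\sqrt{\alpha}\,e_r\in\R^r$ for $1\le i\le r-1$ are linearly independent unit vectors with $\langle v_i,v_j\rangle=\alpha$ for $i\ne j$, so they span $r-1=r-o(r)$ lines at common angle $\arccos\alpha$. It thus remains to show that $N$ $\alpha$-equiangular lines in $\R^r$ satisfy $N\le r+o(r)$. Fix such lines, pick unit representatives $v_1,\dots,v_N$, and assume $N\ge r+2$. The Gram matrix $G$ is positive semidefinite of rank at most $r$ with diagonal entries $1$ and off-diagonal entries $\pm\alpha$, so $G=(1-\alpha)I+\alpha J-2\alpha A$ where $A$ is the adjacency matrix of the graph $H$ on $[N]$ in which $ij$ is an edge exactly when $\langle v_i,v_j\rangle=-\alpha$. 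Evaluating the form of $G$ on vectors $x$ orthogonal to the all-ones vector $\mathbf 1$ gives $x^{\top}Ax\le\lambda\|x\|^2$ there, where $\lambda:=\tfrac{1-\alpha}{2\alpha}$, hence $\lambda_2(A)\le\lambda$; and since $\dim\ker G\ge N-r$ and every $x\in\ker G$ with $x\perp\mathbf 1$ satisfies $Ax=\lambda x$, the eigenvalue $\lambda$ of $A$ has multiplicity $m\ge N-r-1$. All of this is invariant under switching (changing the signs of the $v_i$), so I may use any graph in the switching class of $H$.

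The next step is to use the component structure. Since $\lambda_2(A)\le\lambda$, at most one component of $H$, call it $H_1$, can have spectral radius exceeding $\lambda$; every other component has spectral radius at most $\lambda$, so $\lambda$ appears in it (if at all) only as its Perron eigenvalue, necessarily with multiplicity $1$ and only if it has at least $\lambda+1\ge\tfrac1{2\alpha}$ vertices. Hence at most $2\alpha N$ components other than $H_1$ contribute to the $\lambda$-eigenspace of $A$, and inside $H_1$ the eigenvalue $\lambda$ (if present) is exactly the second one. Therefore $m\le m_1+2\alpha N$, where $m_1$ is the multiplicity of $\lambda$ in $H_1$ (with $m_1:=0$ if $H_1$ does not exist or $\lambda$ is not an eigenvalue of $H_1$). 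Combined with $m\ge N-r-1$, this yields $m_1\ge N-r-1-2\alpha N$; if $m_1=0$ we already get $N\le\tfrac{r+1}{1-2\alpha}=r+o(r)$, so from now on assume $m_1>0$, so that $H_1$ is a connected graph with $\lambda_2(H_1)=\lambda$.

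To close the argument, suppose for contradiction that $N\ge(1+\delta)r$ for some fixed $\delta>0$; as $\alpha\to 0$ this forces $r\to\infty$. Then $m_1\ge N-r-1-2\alpha N\ge\tfrac{\delta}{2}r-1$ once $\alpha$ is small, so $n_1:=|V(H_1)|\ge m_1\ge\tfrac{\delta}{2}r-1$. Since $\lambda\le\tfrac1\alpha$ and $r\ge 1/\alpha^{\omega(1)}$, the order $n_1$ is superpolynomial in $\lambda$. Applying our upper bound on the multiplicity of the second-largest eigenvalue to the connected graph $H_1$ then gives $m_1=o(n_1)=o(N)$. Substituting back, $N-r-1-2\alpha N\le o(N)$, i.e.\ $N(1-o(1))\le r+1$, which contradicts $N\ge(1+\delta)r$ for $r$ large. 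Hence $N=r+o(r)$, and together with the lower bound this proves the theorem.

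The whole argument hinges on the eigenvalue multiplicity statement used at the very end: that a connected graph on $n$ vertices has second-eigenvalue multiplicity $o(n)$ once $n$ is superpolynomial in the value $\lambda$ of that eigenvalue, and crucially \emph{with no bounded-degree hypothesis}. This is the improvement of and extension beyond McKenzie--Rasmussen--Srivastava, and its proof — pushing the relevant Cheeger/interlacing-type arguments into the unbounded-degree, $\lambda\to\infty$ regime — is where I expect the real difficulty to lie; the rest is essentially bookkeeping. Two smaller points still need care: verifying that $H_1$ meets the exact hypotheses of that estimate (connectedness is automatic and the size bound was just obtained), and carefully handling the components of spectral radius exactly $\lambda$ — these are the ``tiled'' configurations behind the extra $\lfloor(r-1)/(k-1)\rfloor$ lines when $\lambda$ has small spectral radius order $k$, a situation that cannot arise here since $\lambda\to\infty$ forces $k\to\infty$.
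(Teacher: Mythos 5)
Your reduction to a multiplicity statement is sound and in fact closely parallels the paper's \Cref{lem:lines-to-multiplicity}: the Gram-matrix identity, the rank--nullity count giving multiplicity at least $N-r-1$ for $\lambda=\frac{1-\alpha}{2\alpha}$, and the component analysis (at most one component of spectral radius above $\lambda$, every other component contributing at most one $\lambda$-eigenvector and having at least $\lambda+1$ vertices) are all correct. The gap is the final step. You invoke a bound of the form ``a connected $n$-vertex graph with second eigenvalue $\lambda$ has $m_G(\lambda)=o(n)$ once $n$ is superpolynomial in $\lambda$, with no bounded-degree hypothesis.'' No such theorem is proved in this paper, and none is known: every multiplicity bound here (\Cref{thm:multiplicity-main}, \Cref{thm:multiplicity-bound}, \Cref{thm:multiplicity-bound-dense}, \Cref{thm:multiplicity-combined}) degrades with the maximum degree through $\log_{\Delta} n$ and becomes trivial once $\Delta = n^{\Omega(1)}$; the paper explicitly states that its result is the first non-trivial bound for arbitrary graphs with maximum degree larger than $\log n$, and the concluding remarks pose it as an open question whether even $\Delta \le n^{o(1)}$ suffices for the bound \eqref{eq:dream} (which, if true, would already improve the equiangular-lines results further). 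You also mischaracterize the improvement over McKenzie--Rasmussen--Srivastava: what is removed is their \emph{regularity} assumption, not the dependence on the degree. Since you never control the maximum degree of $H_1$, it could a priori be comparable to $n_1$, and then nothing available yields $m_1=o(n_1)$.

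The missing ingredient is the switching step, which you mention in passing (``I may use any graph in the switching class'') but never exploit. The paper's \Cref{lem:max-deg-upr-bnd} shows one can choose the unit vector along each line so that the corresponding graph has maximum degree at most $6/\alpha^4$; this is precisely what makes the degree-dependent multiplicity bound usable, since then $\log_{\Delta} n \gtrsim \log_{1/\alpha} r = \omega(1)$ under the hypothesis $r\ge 1/\alpha^{\omega(1)}$, and \Cref{thm:multiplicity-combined} gives
$m_1 \le 5 n \max\left\{ \frac{\log(\lambda+1)}{\lambda},\, \frac{\log^2(1+\log_{\Delta} n)}{\log_{\Delta} n}\right\} = o(n)$,
which is exactly how \Cref{thm:superpolynomial-precise} closes the argument. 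Without establishing (or citing) a bounded-degree representative in the switching class, your final step is unsupported, and that is where the real content of the theorem lies; the rest of your write-up, including the lower-bound construction, is fine.
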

We note that the assumption that $1/\alpha$ is growing is necessary due to the standard construction in the classical case when $1/\alpha$ is an odd integer, which we describe at the end of \Cref{sec:equiangular}. 
Furthermore, we note that the results of \cite{yufei-paper} establish this theorem when $r$ is at least doubly exponential in $1/\alpha^2$, so the main new contribution of the above theorem is an improved requirement on $r$ from double exponential to only super polynomial in $1/\alpha$. When $r$ is at most exponential in $1/\alpha$, our bound is of the form $r+r/(\log_{1/\alpha} r)^{1-o(1)}$ and in fact, it already beats the previously best known (linear) bound when $r$ is a large polynomial in $1/\alpha$. When $r$ is at least exponential in $1/\alpha$, our bound is of the form $r+r\alpha^{1-o(1)}$. 

In the exponential regime, i.e.\ when $r$ is at least exponential in $1/\alpha^{O(1)}$, we can prove an even more precise result which turns out to be tight. For context, note that here \cite{yufei-paper} gives an upper bound of $$r-1+ (r-1)\max\left\{\frac{C/\alpha}{\log \log r},\frac{1}{k-1}\right\},$$
for some absolute constant $C>0$, where $k$ is the spectral radius order of $\frac{1-\alpha}{2\alpha}$, defined above. Note that unless $r$ is doubly exponential in $1/\alpha$, the first term in this bound is worse than the linear bounds from \cite{igor-felix-paper,JP20}. In \cite{igor-paper}, by replacing the use of Ramsey's theorem in the argument from \cite{yufei-paper} with new tools from spectral graph theory, the two bounds are unified by showing an improved upper bound of 
\begin{equation}\label{eq:igor-bound}
r-1+ (r-1)\max\left\{\frac{C\log (1/\alpha)}{\log \log r},\frac{1}{k-1}\right\}.
\end{equation}
Our main result is the following improvement to \eqref{eq:igor-bound}, giving essentially the correct dependency on $\alpha$ in this regime. 
 
\begin{thm}\label{thm:main}
Given $0< \alpha <1 $, if $r\ge 2^{1/\alpha^{O(1)}}$, then the number of $\alpha$-equiangular lines in $\mathbb{R}^r$ is at most
$$r-1+\floor{(r-1)\cdot \frac{2\alpha}{1-\alpha}}.$$
\end{thm}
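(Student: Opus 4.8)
The plan is to combine the by-now-standard reduction of equiangular lines to a spectral graph theoretic statement with the paper's new ingredient: a near-optimal upper bound on the multiplicity of the second eigenvalue of a graph.

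\emph{Reduction.} Suppose $v_1,\dots,v_N\in\R^r$ are unit vectors spanning $N$ $\alpha$-equiangular lines, so the Gram matrix can be written as $M=I+\alpha S$ with $S$ a symmetric $\{0,\pm1\}$-matrix, zero on the diagonal (a Seidel matrix); note $M\succeq0$ and $\rk M\le r$. Let $G$ be the graph on $[N]$ with $ij\in E(G)$ precisely when $S_{ij}=-1$, so $S=J-I-2A(G)$ and $M=(1-\alpha)I+\alpha J-2\alpha A(G)$. Restricting $M\succeq0$ to $\mathbf1^{\perp}$ gives $A(G)\big|_{\mathbf1^{\perp}}\preceq\lambda I$, where $\lambda:=\tfrac{1-\alpha}{2\alpha}$ (so that $\tfrac{2\alpha}{1-\alpha}=1/\lambda$), and in particular $\lambda_2(G)\le\lambda$; meanwhile $\rk M\le r$ together with the identity $\ker M\cap\mathbf1^{\perp}=\ker\bigl(A(G)-\lambda I\bigr)\cap\mathbf1^{\perp}$ shows that $\lambda$ is an eigenvalue of $A(G)$ of multiplicity at least $N-r-1$. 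Discarding the case $N\le r+3$ (in which the asserted bound is trivial, as $r+3$ is far below $r-1+\floor{(r-1)/\lambda}$ for $r$ in our range), the eigenvalue $\lambda$ has multiplicity $\ge2$ and hence coincides with the \emph{second}-largest eigenvalue $\lambda_2(G)$; and we may assume $G$ is connected, since otherwise the multiplicity bound below is applied to the single component whose adjacency matrix realizes $\lambda_2$, using that every other component contributing the eigenvalue $\lambda$ has at least $\lambda+1$ vertices. Being careful about whether the all-ones vector lies in the relevant eigenspace — following the refinements of \cite{yufei-paper,igor-paper}, which replace Ramsey's theorem by spectral tools — one can arrange that $\lambda_2(G)$ has multiplicity at least $N-r$ (up to an additive constant) and that $G$ is, for the purposes of the estimate, close to regular, so that the all-ones direction carries the Perron eigenvector and no multiplicity is lost; this is what makes the constants come out sharp.

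\emph{The key lemma and the conclusion.} The heart of the matter is the sharp bound: if $H$ is a connected graph on $n$ vertices with second-largest eigenvalue $\lambda_2>0$, then
\[
\operatorname{mult}_H(\lambda_2)\;\le\;\frac{n}{\lambda_2+1}\,\bigl(1+o(1)\bigr),
\]
where the error tends to $0$ as $n\to\infty$ with $\lambda_2$ fixed; more precisely one wants a clean inequality of the shape $\operatorname{mult}_H(\lambda_2)\cdot(\lambda_2+1)\le n-\Theta(\lambda_2)$, tight enough that the extremal equiangular-line configurations are matched up to the floor. Feeding the reduction into this — with $n\approx N$, multiplicity $\ge N-r$, and $\lambda_2=\lambda$ — and performing the resulting elementary arithmetic while tracking the floor function yields exactly
\[
N\;\le\;r-1+\floor{(r-1)/\lambda}\;=\;r-1+\floor{(r-1)\cdot\tfrac{2\alpha}{1-\alpha}}.
\]
The hypothesis $r\ge 2^{1/\alpha^{O(1)}}$ is precisely what makes the lower-order error term in the multiplicity bound — which decays slowly in $n$ (hence in $r$) and whose size is governed by a function of $\lambda\approx1/\alpha$ — negligible against the $\Theta(1)$ rounding losses; a weaker version of this bound, valid already for superpolynomial $r$ but with a worse (still $o(n)$) error, is what underlies \Cref{thm:superpolynomial}.

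\emph{Proving the key lemma; the main obstacle.} The difficulty — and where essentially all the work lies — is obtaining the \emph{optimal} constant $1/(\lambda_2+1)$ rather than the $1/\operatorname{polylog}$-type savings available from the McKenzie--Rasmussen--Srivastava machinery. The plan is to analyze the orthogonal projector $\Pi$ onto the $\lambda_2$-eigenspace: its trace is the multiplicity, and for a vertex $v$ the diagonal entry $\Pi_{vv}$ is nonzero exactly when deleting $v$ strictly lowers the multiplicity (Cauchy interlacing), so the eigen-directions are carried by a structured set of vertices. Using the eigenvalue equation together with orthogonality to the positive Perron eigenvector of $\lambda_1>\lambda_2$, each $\lambda_2$-eigenvector must be spread over an induced subgraph of spectral radius at least $\lambda_2$, hence over at least $\lambda_2+1$ vertices; the crux is that distinct eigenvectors can overlap in their supports, and extracting the sharp constant requires controlling this overlap — refining the effective-resistance and local-spectral estimates of McKenzie--Rasmussen--Srivastava, and, in the regime where $\lambda_2$ is large, substituting a more direct argument. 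The remaining points — that the bound is genuinely in terms of the real number $\lambda_2$ (so that $\floor{(r-1)/\lambda}$, and not its integer-truncated analogue, appears), the reduction to connected $H$ together with the regularization step, and the precise shape of the additive slack so that the final bookkeeping closes — are where the delicate but more routine effort goes.
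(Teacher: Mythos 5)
Your reduction follows the paper's general route (Gram matrix, graph on the negative inner products, $\lambda_2\le\frac{1-\alpha}{2\alpha}$ with multiplicity at least $n-r-1$, passing to the component realizing $\lambda_2$), but it has a concrete gap already at this stage: every multiplicity bound in play requires the corresponding graph to have maximum degree small compared to $n$ (the paper arranges $\Delta\le 6/\alpha^4$ so that $\log_\Delta n$ exceeds a large power of $\lambda_2$), and this is achieved by a switching argument choosing the sign of each unit vector (\Cref{lem:max-deg-upr-bnd}). Your sketch never performs this step; the assertion that $G$ is ``close to regular'' is neither proved nor the property that is actually needed. Relatedly, the case $\lambda_1(G)=\frac{1-\alpha}{2\alpha}$, which is where the extremal construction lives and which produces the first term $(r-1)\cdot\frac{2\alpha}{1-\alpha}$ in \Cref{lem:lines-to-multiplicity}, is treated only in passing, although it is handled by a separate component-counting argument rather than by any multiplicity bound.

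The more serious gap is that the heart of the matter --- the sharp multiplicity bound --- is stated but not proved. You correctly note that $m_H(\lambda_2)\le\frac{n}{\lambda_2+1}(1+o(1))$ is too weak to absorb the additive slack coming from $\rk(M-\alpha J_n)\le r+1$, and you ask for a strengthened inequality; but that strengthening is precisely the paper's main new theorem, and your proposed route to it (analyzing the spectral projector, bounding the overlap of eigenvector supports, ``refining the effective-resistance and local-spectral estimates'' of McKenzie--Rasmussen--Srivastava) is left entirely unresolved at its crux and is not how the paper proceeds. The paper instead proves, for connected graphs with $\log_\Delta n\ge\lambda_2^{O(1)}$, the bound $m_G(\lambda_2)\le\frac{n}{B-1}$ of \Cref{thm:multiplicity-approx-bound}, applied with radius $4$ and $B=\lambda_2+3$ to get $m\le\frac{n}{\lambda_2+2}$; its proof removes the centers of a maximal family of disjoint large balls, observes that all uncovered vertices have degree at most $\lambda_2-1$, and then combines the eigenvalue-increment lemma for partially covered balls (\Cref{lem:partial-net}), the small-subgraph lemma (\Cref{lem:small-eval}), and a trace/closed-walk count. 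Indeed, the paper explicitly remarks that its general bound $\frac{n}{\lambda_2+1}+n^{o(1)}$ is ``just slightly insufficient'' for \Cref{thm:main} and that connectivity-based expansion is what rescues the argument; your proposal essentially hopes for a bound of the kind the paper shows is not enough, and then gestures at improving it without an argument. So the proposal identifies the correct statement to aim for, but leaves the paper's main new content unproved.
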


The standard construction in the classical case, when the angle is one over an odd integer, shows that our theorem is tight. In particular, we get the following corollary. 

\begin{cor}\label{cor:main}
Given an integer $k \geq 2$, if $r\ge 2^{k^{O(1)}}$, then the maximum number of $\frac{1}{2k-1}$-equiangular lines in $\R^r$ equals
    $$r-1 + \floor{\frac{r-1}{k-1}}.$$
\end{cor}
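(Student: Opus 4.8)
The plan is to derive \Cref{cor:main} from \Cref{thm:main}, which supplies the upper bound, together with a matching construction for the lower bound; the substantive work has already gone into \Cref{thm:main}. Setting $\alpha=\frac{1}{2k-1}$, the quantity appearing in \Cref{thm:main} simplifies,
\[
\frac{2\alpha}{1-\alpha}=\frac{2/(2k-1)}{2(k-1)/(2k-1)}=\frac{1}{k-1},
\]
so its bound becomes exactly $r-1+\floor{\frac{r-1}{k-1}}$, while its hypothesis $r\ge 2^{1/\alpha^{O(1)}}=2^{(2k-1)^{O(1)}}$ follows from $r\ge 2^{k^{O(1)}}$ after enlarging the implicit constant (using $2k-1\le k^2$ for $k\ge2$). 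It then remains to exhibit $r-1+\floor{\frac{r-1}{k-1}}$ equiangular lines in $\R^r$ at angle $\arccos\frac1{2k-1}$.

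For the lower bound I would use the standard construction described at the end of \Cref{sec:equiangular}. Write $r-1=q(k-1)+s$ with $0\le s\le k-2$, so $\floor{\frac{r-1}{k-1}}=q$ and the target count is $qk+s=(r-1)+q$. Let $G$ be the disjoint union of $q$ copies of $K_k$ and $s$ isolated vertices, let $B$ be the Seidel matrix of $G$ ($0$ on the diagonal, $-1$ on edges, $+1$ on non-edges), and put $M=I+\frac1{2k-1}B$. If $M$ is positive semidefinite with $\rk M=r$, then $M$ is the Gram matrix of $qk+s$ unit vectors in $\R^r$ all of whose pairwise inner products lie in $\{\pm\frac1{2k-1}\}$; as no inner product equals $\pm1$, these vectors span $\R^r$ and determine $qk+s$ distinct equiangular lines with common angle $\arccos\frac1{2k-1}$, as required. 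To verify the claim about $M$, I would diagonalize $B$ via the equitable partition of $G$ into its $q$ cliques and the class of $s$ isolated vertices: mean-zero vectors supported on a single clique give eigenvalue $+1$ (total multiplicity $q(k-1)$); mean-zero vectors on the isolated vertices give eigenvalue $-1$ (multiplicity $s-1$); vectors that are constant on each clique, vanish on the isolated vertices, and sum to zero over the cliques give eigenvalue $-(2k-1)$ (multiplicity $q-1$); and the remaining two eigenvalues are the roots of an explicit $2\times2$ characteristic polynomial whose value at $-(2k-1)$ equals $2kq(k-1)>0$, which (together with the product of the two roots being negative) forces both of them to exceed $-(2k-1)$. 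Hence $\lambda_{\min}(B)=-(2k-1)$ with multiplicity exactly $q-1$, so $M\succeq0$ and $\rk M=(qk+s)-(q-1)=q(k-1)+s+1=r$, as needed.

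Since \Cref{thm:main} does the heavy lifting, there is no genuine obstacle in the corollary itself, only routine bookkeeping: the identity $\frac{2\alpha}{1-\alpha}=\frac1{k-1}$, the comparison $2^{(2k-1)^{O(1)}}\le2^{k^{O(1)}}$ of the two dimension thresholds, and the equitable-partition eigenvalue computation for $qK_k\cup\overline{K_s}$. The only mild points requiring care are that the Gram vectors furnished by $M$ determine $qk+s$ \emph{distinct} lines (automatic, as no inner product is $\pm1$) and the degenerate small cases $q\le1$, which do not arise here since $r\ge2^{k^{O(1)}}$ is large and can in any event be checked directly.
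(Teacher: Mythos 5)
Your proposal is correct and takes essentially the same route as the paper: the upper bound is read off from \Cref{thm:main} via $\frac{2\alpha}{1-\alpha}=\frac{1}{k-1}$, and the lower bound is exactly the paper's construction (disjoint copies of $K_k$ plus isolated vertices, with Gram matrix $(1-\alpha)I+\alpha J-2\alpha A$, which is your $I+\tfrac{1}{2k-1}B$). The only cosmetic difference is that the paper certifies positive semidefiniteness and rank at most $r$ by a one-line quadratic-form estimate using $\lambda_1(A)=k-1$, whereas you carry out the full equitable-partition eigenvalue computation of the Seidel matrix.
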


This improves upon the previously best-known doubly exponential constraint on $r$ from \cite{yufei-paper,igor-paper}. We note that the case where $1/\alpha$ is an odd integer has historically been the primary focus in this area, originating with the seminal work of Lemmens and Seidel in 1973 \cite{LS73}, where they resolved the problem for $k=2$ and sufficiently large $r$. Indeed, they were particularly interested in this case due to a result of Neumann (see \cite{LS73}), who showed that an upper bound of $2r$ holds whenever $1/\alpha$ is not an odd integer. 
Finally, we note that for the case of $k=3$, Lemmens and Seidel conjectured how large $r$ needs to be for the answer given in \Cref{cor:main} to kick in. This conjecture has been settled recently by Cao, Koolen, Lin, and Yu \cite{CKLY22}, building on the work of Neumaier \cite{N89}.

\subsection{Second eigenvalue multiplicity} \label{sec:second-eigenvalue}
A crucial ingredient in the quest to solve the equiangular lines problem, which was brought to the table in \cite{yufei-paper}, is a relation between it and the multiplicity of the second largest eigenvalue of a certain auxiliary graph. Our proofs build on this concept, as well as a slightly weaker version of the degree bound obtained in \cite{igor-paper}, while making use of the general framework introduced in \cite{igor-felix-paper}. We note that our main novel contribution is an improved bound for the second eigenvalue multiplicity. However, the nature of the improvement requires a slight change to the general approach. 

In addition to the relation to the equiangular lines problem, the question of bounding the multiplicity of the second eigenvalue of a graph has a number of other connections and is a very interesting problem in spectral graph theory in its own right. For example, the case of Cayley graphs is already quite interesting due to it having connections to deep results in Riemannian geometry. For instance, following the approach of Colding and Minicozzi \cite{colding} on harmonic functions on manifolds and Kleiner’s proof \cite{kleiner} of Gromov’s theorem on groups of polynomial growth \cite{gromov}, Lee and Makarychev \cite{lee2008eigenvalue} showed that in groups with bounded doubling constant, the second eigenvalue multiplicity is bounded. 
Owing to the fact that many algorithmic problems become much easier on graphs with few large eigenvalues, there are also interesting connections to computer science. For instance, McKenzie, Rasmussen, and Srivastava \cite{tcs-paper} do an excellent job of motivating the problem from this perspective and give a detailed history. In addition, they mention further connections to higher-order Cheeger inequalities, typical support size of random walks, and the properties of the Perron eigenvector. Finally, the second eigenvalue multiplicity question is also relevant in the study of Schr\"odinger operators on two-dimensional Riemannian manifolds. Indeed, the connection between the spectral theory of graphs and Schr\"odinger operators on surfaces is a point of view emphasized by Colin de Verdi\`ere and is the underlying motivation for the definition of his graph parameter. In particular, Letrouit and Machado \cite{LM24} were recently able to make progress on a conjecture of de Verdi\`ere \cite{C87} by extending the ideas of \cite{yufei-paper} to Laplacians of negatively curved surfaces.

Our main result here is the following bound on the second eigenvalue multiplicity of an arbitrary graph. 
\begin{thm}\label{thm:multiplicity-main}
    Let $G$ be an $n$-vertex graph with second eigenvalue $\lambda_2$ and maximum degree $\Delta\ge 2.$  Then, 
    $$m_G\left(\lambda_2\right)\le \max\left\{\frac{n}{\lambda_2^{1-o(1)}},\frac{n}{(\log_{\Delta} n)^{1-o(1)}}\right\}.$$
    Moreover, if $\log_{\Delta} n \ge \lambda_2^{O(1)}$, we have $$m_G\left(\lambda_2\right)\le \frac{n}{\lambda_2+1}+n^{o(1)}.$$
\end{thm}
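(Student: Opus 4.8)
The plan is to work throughout with the orthogonal projection $P$ onto the $\lambda_2$-eigenspace $W$ and the associated frame $x_v:=Pe_v\in W$. One first reduces to the case that $G$ is connected with $\lambda_1>\lambda_2$: for disconnected $G$ one sums the component-wise estimates, noting that each component whose spectral radius equals $\lambda_2$ is connected and hence contributes exactly $1$ to $m_G(\lambda_2)$ while having at least $\lambda_2+1$ vertices, so that the $n/(\lambda_2+1)$ term and the error term are additive (the argument below in fact bounds the multiplicity of any eigenvalue strictly below the spectral radius, which is what is needed for the remaining components). For connected $G$ one records the identities $\sum_{u\sim v}x_u=\lambda_2 x_v$ (as $W$ is $A$-invariant), $\langle x_u,x_v\rangle=P_{uv}$, $\sum_v x_vx_v^{\top}=I_W$ (so $\{x_v\}$ is a tight frame for $W$), $\|x_v\|^2=P_{vv}\le 1$, and $\sum_v\|x_v\|^2=\tr P=m_G(\lambda_2)$. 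The goal becomes an upper bound on the average eigenspace mass $\tfrac1n\sum_v\|x_v\|^2$: roughly $\lambda_2^{-1+o(1)}$ or $(\log_\Delta n)^{-1+o(1)}$ in general, and $\tfrac1{\lambda_2+1}+n^{-1+o(1)}$ once $\log_\Delta n\ge\lambda_2^{O(1)}$.

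The core is a local-to-global estimate at a scale $t$, in the spirit of the closed-walk / support method of McKenzie, Rasmussen and Srivastava~\cite{tcs-paper} that is refined in~\cite{yufei-paper,igor-paper}. I would cover $V(G)$ by balls $B(v,t)$ with $v$ ranging over a maximal $t$-separated net $N$ (so the balls $B(v,\lceil t/2\rceil)$ are pairwise disjoint). On a ball $S=B(v,t)$, the restriction $w|_S$ of any $w\in W$ is an \emph{approximate} $\lambda_2$-eigenvector of the induced graph $G[S]$: the equation $A_{G[S]}(w|_S)=\lambda_2\, w|_S$ fails only on the sphere of radius $t$ around $v$, with discrepancy controlled by the values of $w$ at distance $t$. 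Hence the part of $W$ genuinely localized on $S$ is close to the span of eigenvectors of $A_{G[S]}$ with eigenvalue near $\lambda_2$, and a stability version of the elementary fact that a nonzero $\lambda_2$-eigenvector of a graph has support of size at least $\lfloor\lambda_2\rfloor+1$ (obtained by looking at a coordinate of maximum modulus) bounds this local contribution by about $|S|/(\lambda_2+1)$. Summing over $N$ and using $\sum_{v\in N}|B(v,\lceil t/2\rceil)|\le n$ together with $|B(v,t)|\le |B(v,\lceil t/2\rceil)|\cdot\Delta^{\lceil t/2\rceil}$ then gives $m_G(\lambda_2)\lesssim \tfrac{n}{\lambda_2+1}\Delta^{\lceil t/2\rceil}+(\text{boundary error})$. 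The ball-size constraint $\Delta^{t}\le n$ caps $t$ at essentially $\log_\Delta n$; the boundary spheres can be made to carry a vanishing fraction of each $\|w\|^2$ for all but an $n^{o(1)}$ fraction of the balls, but this averaging only wins once $t$ exceeds a fixed power of $\lambda_2$ — which is exactly the hypothesis $\log_\Delta n\ge\lambda_2^{O(1)}$ for the clean bound, and below that threshold one only recovers the weaker $\max\{n/\lambda_2^{1-o(1)},\,n/(\log_\Delta n)^{1-o(1)}\}$.

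The main obstacle — and where the new input over~\cite{tcs-paper,igor-paper} lies — is making the passage "restriction of a global $\lambda_2$-eigenvector $\leadsto$ local approximate $\lambda_2$-eigenvector" quantitatively essentially lossless. Two things must be controlled simultaneously: (i) an averaging scheme over centers and/or scales so that the radius-$t$ spheres absorb a negligible fraction of each $\|w\|^2$ for typical balls — this is delicate for a general graph, since ball sizes can vary enormously, so the plain $t$-separated net has to be replaced by a covering weighted by the Perron vector $\phi$ so that mass is not systematically pushed onto the boundaries; and (ii) a robust version of the support lower bound "an approximate $\lambda_2$-eigenvector with small boundary error has support at least $\lambda_2+1-o(1)$", which is what upgrades the crude $O(1/\lambda_2)$ constant to $\tfrac1{\lambda_2+1}+n^{o(1)}$. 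Tracking how the boundary error in (i) and the slack in (ii) trade off against the constraint $\Delta^{t}\le n$ is precisely what produces the two regimes in the statement and pins down the dependence $\log_\Delta n\ge\lambda_2^{O(1)}$.
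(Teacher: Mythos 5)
Your proposal does not go through, and the failure is at the central step rather than in technicalities. The key inference you rely on --- that because every (approximate) $\lambda_2$-eigenvector localized on a ball $S$ has support of size at least roughly $\lambda_2+1$, the ``local contribution'' of the eigenspace on $S$ is at most about $|S|/(\lambda_2+1)$ --- is false as a matter of linear algebra: a subspace in which every nonzero vector has large support can still have nearly full dimension (e.g.\ in $\one^{\perp}\subseteq\R^S$ every nonzero vector has support at least $2$, yet $\dim\one^{\perp}=|S|-1$; more generally a generic codimension-$k$ subspace has all supports of size $\ge k+1$). Converting support lower bounds into multiplicity upper bounds is exactly the hard part of the problem, and no ``stability version'' of the coordinate-of-maximum-modulus argument supplies it. The paper never argues this way: in the superexponential regime it deletes the centers of a maximal family of disjoint large balls, uses \Cref{lem:partial-net} to force the spectral radius of every radius-$R$ ball in the reduced graph strictly below $\lambda_2$ (by a quantified amount $\eps^2$ in the $2\ell$-th power), and only then converts this local spectral gap into a multiplicity bound through the trace of $A_H^{2R}$ and Cauchy interlacing (\Cref{thm:multiplicity-approx-bound}, fed into \Cref{thm:multiplicity-combined} with $B=\lambda_2+2$). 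Your frame/projection setup with $x_v=Pe_v$ has no substitute for this mechanism.

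Two further gaps compound the problem. First, your covering accounting is lossy in a way you cannot afford: with a maximal $t$-separated net the balls $B(v,t)$ can overlap with unbounded multiplicity in a general graph, and your own displayed estimate $m_G(\lambda_2)\lesssim\frac{n}{\lambda_2+1}\Delta^{\lceil t/2\rceil}+(\text{boundary error})$ is trivial once $t\approx\log_{\Delta}n$ (the factor $\Delta^{\lceil t/2\rceil}$ is of order $\sqrt{n}$); the paper avoids any such double counting because $\tr A^{2R}=\sum_v \textbf{e}_v^{\intercal}A^{2R}\textbf{e}_v$ charges each vertex exactly once, with locality entering only through the fact that a closed walk of length $2R$ from $v$ stays in $B^{(R)}(v)$. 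Second, you give no actual mechanism producing the $n/\lambda_2^{1-o(1)}$ term of the first bound: in the paper this comes from \Cref{thm:multiplicity-bound-dense}, whose engine is the count of closed walks by support size (at most $\lambda_2^{2\ell}$ connected $\ell$-vertex subgraphs through a fixed vertex, via the DFS encoding) combined with a random deletion of $\approx n\log\ell/\ell$ vertices to kill all walks of large support, plus \Cref{lem:small-eval} to dispose of the case where some ball already has $\lambda_1>\lambda_2$; none of these ingredients, nor any replacement for them, appears in your outline. Your reduction to connected components is fine and matches the paper, but the core of the argument would need to be rebuilt along the lines above.
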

Note that the latter result is essentially tight for the graph consisting of a disjoint union of cliques on $\lambda_2+1$ vertices (and a slight addtive error term is necessary, as we can replace one component with a Paley graph of order $(2\lambda_2+1)^2$). We also note that the first term in the maximum is only relevant when $\log n = \lambda_2^{\Theta(1)}$, as otherwise the second term or the moreover part are stronger. 
This is due to the fact that the above result combines two different approaches,
a more structured one when $\log n \ge \lambda_2^{O(1)}$ (henceforth referred to as the superexponential regime) and a more probabilistic one when $\log n \le \lambda_2$ (henceforth referred to as the subexponential regime).

In the subexponential regime, our bound improves upon and significantly extends a result of McKenzie, Rasmussen, and Srivastava \cite{tcs-paper}, who proved an upper bound of the form $\frac{n (\log d)^{1/4}}{(\log_{d} n)^{1/4-o(1)}}$ for a $d$-regular graph with $d = \Omega(\log^{1/4} n)$. In particular, removing the regularity assumption is key for our applications to the equiangular lines problem. Besides this, their bound becomes trivial already when $d \ge 2^{\Omega(\sqrt{\log n})}$, whereas our result remains non-trivial all the way up to $d=n^{\Omega(1)}.$ In fact, our bound here is the first non-trivial upper bound on the second eigenvalue multiplicity for arbitrary $n$-vertex graphs with maximum degree larger than $\log n$.

While the bound of \Cref{thm:multiplicity-main} in the subexponential regime is behind our proof of \Cref{thm:superpolynomial}, the bound it gives in the superexponential regime is just slightly insufficient to conclude \Cref{thm:main}. The additional ingredient that comes to our rescue in the equiangular lines setting, is that we can reduce to working with connected graphs. Note that Jiang, Yao, Tidor, Zhang, and Zhao \cite{yufei-paper} also made use of this assumption, showing that in an $n$-vertex connected graph of maximum degree $\Delta$, the second eigenvalue multiplicity is at most $O(n/\log_{\Delta} \log n).$ They also observed that in expander graphs, one can improve upon this bound substantially. The key behind our new bound on equiangular lines in the superexponential regime, \Cref{thm:main}, is an extension of this result which is much more flexible and in particular, gives an interesting bound even when using extremely weak expansion (see e.g.\ \Cref{cor:multiplicity-bound} below). Note that the parameter $B$ in the following can be interpreted as a measure of some kind of expansion of $G$.

\begin{thm}\label{thm:multiplicity-bound}
    Let $G$ be a connected $n$-vertex graph with second eigenvalue $\lambda_2\ge 1$ and maximum degree $\Delta$. Suppose that for some $B > 1$ and integer $r \ge 1$, any vertex $v$ of $G$ with degree larger than $\lambda_2-1$ is within distance $r$ from at least $B$ vertices of $G$. Then, provided $2^9r\lambda_2^{4r+2} \log B \le \log_{\Delta} n $, we have
    $$m_G\left(\lambda_2\right)\le \frac{n}{B-1}.$$ 
\end{thm}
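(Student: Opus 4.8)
The plan is to analyze the orthogonal projection $P$ onto the $\lambda_2$-eigenspace $W\subseteq\R^{V(G)}$, of dimension $m:=m_G(\lambda_2)$. Since $P$ commutes with the adjacency matrix $A$, for every vertex $v$ the vector $Pe_v$ is a $\lambda_2$-eigenvector, and writing $h(v):=\langle Pe_v,e_v\rangle=\|Pe_v\|^2$ one has the two identities $\sum_v h(v)=\tr(P)=m$ and $h(v)=\max\{\,\phi(v)^2:\phi\in W,\ \|\phi\|=1\,\}$. From the eigenvalue equation $\lambda_2\phi(v)=\sum_{u\sim v}\phi(u)$ and Cauchy--Schwarz one also gets $h(v)\le\deg(v)/\lambda_2^2$ for every $v$, so in particular $h(v)<1/\lambda_2$ at every vertex of degree at most $\lambda_2-1$, and any vertex at which a nonzero $\lambda_2$-eigenvector attains its $\ell^\infty$-norm has degree $>\lambda_2-1$. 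The desired bound $m\le n/(B-1)$ is exactly the assertion that the average of $h$ over $V(G)$ is at most $1/(B-1)$, so I would argue by contradiction: assuming $m(B-1)>n$, derive a large set $V_\star$ of vertices of degree $>\lambda_2-1$ on which $h$ is bounded below by a fixed multiple of $1/(B-1)$, and note that by hypothesis each $v\in V_\star$ lies within distance $r$ of a set of at least $B$ vertices.

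The heart of the proof is a new delocalization statement for $\lambda_2$-eigenvectors of a connected graph, extending the observation of \cite{yufei-paper} that eigenvectors must spread out in expanders: if a unit $\phi\in W$ is comparatively large at a vertex $v$ of degree $>\lambda_2-1$, then its $\ell^2$-mass is forced onto many of the $\ge B$ vertices near $v$, and hence it cannot have been so large to begin with. The only elementary tool available locally is the eigenvalue equation, applied over the $r$ steps permitted by the hypothesis; writing $\phi=\lambda_2^{-r}A^r\phi$ and expanding produces relations between the value of $\phi$ at $v$ and its values on the ball $B_r(v)$, but a single such application loses a multiplicative factor polynomial in $\lambda_2^{r}$ (and can be badly lossy when $v$ has degree far larger than $\lambda_2$, where the equation spreads mass very thinly). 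To beat this loss I would bootstrap: repeatedly re-centre the argument at a vertex of large $|\phi|$ among the newly reached vertices --- which again has degree $>\lambda_2-1$ and hence again has $\ge B$ vertices in its $r$-ball --- while tracking how the $\ell^2$-mass of $\phi$ redistributes. Since each $r$-ball has at most $\Delta^{O(r)}$ vertices, this can be iterated for roughly $\log_\Delta n\big/\big(r\,\lambda_2^{O(r)}\log B\big)$ rounds before exhausting the $n$ vertices of $G$, and the quantitative hypothesis $2^9 r\lambda_2^{4r+2}\log B\le\log_\Delta n$ is precisely what guarantees enough rounds for the compounded $\lambda_2^{O(r)}$ losses to be absorbed; carrying out the accounting should then force $m=\sum_v h(v)\le n/(B-1)$, the desired contradiction.

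I expect the main obstacle to be making the bootstrapping rigorous: one must decide at each round which nearby vertex to re-centre at, verify that the vertices reached in distinct rounds are essentially disjoint so that the total budget $\|\phi\|^2=1$ is not overspent, and control the per-round loss by $\lambda_2^{O(r)}$ even in the presence of vertices of very large degree --- this bookkeeping is exactly what the condition on $\log_\Delta n$ is designed to support. A secondary point is to set up the initial contradiction hypothesis so that the vertices of $V_\star$ to which the local argument is applied genuinely have degree $>\lambda_2-1$; this is automatic when $\lambda_2$ is not too small relative to $B$ and otherwise requires a short separate argument using the bound $h(v)<1/\lambda_2$ at low-degree vertices.
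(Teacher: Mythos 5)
There is a genuine gap: your proposal defers the entire quantitative content of the theorem to the ``bootstrapping'' delocalization step, which you describe only as an intention and yourself flag as the main obstacle; as stated it is not an argument, and there are concrete reasons it would not go through. First, the contradiction setup already breaks when $B \gg \lambda_2$ (the regime the theorem is designed for, e.g.\ $B \approx \lambda_2 + \delta \log_\Delta \log n$ in \Cref{cor:multiplicity-bound}): from $\sum_v h(v) = m > n/(B-1)$ you cannot extract a large set of vertices of degree larger than $\lambda_2-1$ on which $h$ is bounded below, because your low-degree bound $h(v) \le \deg(v)/\lambda_2^2 < 1/\lambda_2$ still permits essentially all of the mass, up to $n/\lambda_2 \gg n/(B-1)$, to sit on low-degree vertices, and then the hypothesis about $B$ vertices within distance $r$ is never available. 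Second, the per-round loss in your bootstrap is not $\lambda_2^{O(r)}$ as claimed: applying $\phi = \lambda_2^{-r}A^r\phi$ at a vertex spreads the relation over up to $\Delta^r$ vertices, low-degree regions can have branching $\approx \lambda_2 - 1$ which exactly cancels the pointwise decay $(1-1/\lambda_2)$ per step, and nothing in the sketch controls double counting of $\ell^2$-mass across rounds or explains how statements about a single unit eigenvector are converted into a bound on $m = \tr P$, a sum over an entire orthonormal eigenbasis.

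The paper's proof (\Cref{thm:multiplicity-approx-bound}) is structured quite differently and it is exactly this structure that resolves the two issues above. One takes a maximal collection of disjoint balls of radius $r$ and size at least $B$, deletes their at most $n/B$ centers, and observes that by maximality every surviving vertex is either within distance $2r$ of a deleted center or has degree at most $\lambda_2 - 1$; this is the step that handles low-degree vertices, since they enter not through a pointwise bound on $h$ but through \Cref{lem:partial-net}, which shows that every ball $B_H^{(R)}(v)$ in the pruned graph $H$ satisfies $\lambda_1\bigl(B_H^{(R)}(v)\bigr)^{4r} \le \lambda_2^{4r} - (8\lambda_2)^{-2}$. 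A closed-walk (trace) count of length $2R$, with $R$ of order $\log_\Delta n$ guaranteed by \Cref{lem:small-eval} and the choice of the radius $S$, then yields $m_H(\lambda_2) \le n/B^2$, and Cauchy interlacing restores the deleted centers at a cost of $n/B$, giving $n/B + n/B^2 \le n/(B-1)$. None of these ingredients --- the maximal disjoint ball net, the deletion-plus-interlacing scheme, the local eigenvalue-drop lemma, or the global trace estimate --- appears in your sketch, and without some substitute for them the approach via the local quantity $h(v)$ and re-centering does not yield the theorem.
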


We note that for our application to the equiangular lines problem, we only require the above result for a very small value of $r$, namely $r=4$. For comparison to our lower bound requirement on $n$, note that in order for the bound from \cite{yufei-paper} to be non-trivial, one needs $\Delta^{O(1)}\le \log n$. 
The following corollary shows our improved bound in this regime. 
It exploits the very weak expansion we obtain from just the connectivity assumption, capturing the aforementioned result from \cite{yufei-paper} and giving an improvement when $\lambda_2 \gg \log_{\Delta} \log n$ or $\delta \gg 1.$ 

\begin{cor}\label{cor:multiplicity-bound}
    Let $G$ be a connected $n$-vertex graph with second eigenvalue $\lambda_2,$ $\Delta=\Delta(G)\ge 2$, and $\delta=\delta(G)$.  Then, provided $\log n \ge \Delta^{O(1)}$, we have
    $$m_G\left(\lambda_2\right)\le \frac{n}{\lambda_2+\Omega(\delta \log_{\Delta} \log n)}.$$ 
\end{cor}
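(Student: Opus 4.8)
The plan is to derive \Cref{cor:multiplicity-bound} from \Cref{thm:multiplicity-bound}, extracting the tiny amount of expansion we need from connectivity and the minimum degree alone. Since \Cref{thm:multiplicity-bound} is stated for $\lambda_2 \ge 1$, we assume this throughout; the degenerate regime $\lambda_2 < 1$ is not the point and can be treated directly. The only elementary ingredient is the following ball-growth estimate: \emph{if $G$ is connected with minimum degree $\delta$, then for every vertex $v$ and every integer $r \ge 1$,}
\begin{equation*}
|B_r(v)| \;\ge\; \min\Bigl\{\,n,\; 1 + \deg(v) + \floor{\tfrac{r-1}{3}}(\delta+1)\,\Bigr\},
\end{equation*}
where $B_r(v)$ denotes the set of vertices within distance $r$ of $v$.

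To see this, run a breadth-first search from $v$ with levels $L_0 = \{v\}, L_1, L_2, \dots$. If $B_r(v) = V(G)$ the estimate is trivial, so assume not; then $v$ has a vertex at distance $\ge r+1$, so $L_i \ne \emptyset$ for all $i \le r+1$. For any $i$ and any $u \in L_i$, the closed neighbourhood $N[u]$ is contained in $L_{i-1} \cup L_i \cup L_{i+1}$, whence $|L_{i-1}| + |L_i| + |L_{i+1}| \ge |N[u]| \ge \delta + 1$. Applying this to one vertex from each of the $\floor{(r-1)/3}$ pairwise disjoint triples $(L_2, L_3, L_4), (L_5, L_6, L_7), \dots$, all of which lie inside $L_2 \cup \dots \cup L_r$, and adding $|L_0| + |L_1| = 1 + \deg(v)$, yields the claim.

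Now apply \Cref{thm:multiplicity-bound} with $r := \floor{c \log_\Delta \log n}$ for a small absolute constant $c > 0$ and $B := \lambda_2 + \floor{(r-1)/3}(\delta+1)$. The hypothesis $\log n \ge \Delta^{O(1)}$ guarantees that $r$ is a well-defined integer exceeding a suitable absolute constant, that $\Delta \le (\log n)^{1/C}$ for the relevant large constant $C$, and hence that $B \le (\log n)^{O(1)} < n$; together with the ball-growth estimate and the fact that every vertex of degree $> \lambda_2 - 1$ has degree $\ge \lambda_2 - 1$, this shows each such vertex lies within distance $r$ of at least $B > 1$ vertices. It remains to check the condition $2^9 r \lambda_2^{4r+2} \log B \le \log_\Delta n$. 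Using $\lambda_2 \le \Delta$ we have $\lambda_2^{4r+2} \le \Delta^2 (\log n)^{4c}$, while $\log B \le \log \Delta + \log\log\log n + O(1)$ and $r \le c \log_\Delta \log n$; substituting $\log_\Delta n = \log n / \log \Delta$ and $\Delta \le (\log n)^{1/C}$, the left-hand side is at most $(\log n)^{4c + 2/C - 1 + o(1)} \cdot \log_\Delta n$, which is at most $\log_\Delta n$ once $c$ is small, $C$ is large and $n$ is large. \Cref{thm:multiplicity-bound} now gives $m_G(\lambda_2) \le n/(B-1)$, and since $\floor{(r-1)/3}(\delta+1) = \Omega(\delta \log_\Delta \log n)$ dominates the additive constant losses, $B - 1 \ge \lambda_2 + \Omega(\delta \log_\Delta \log n)$, which is the assertion.

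All the real work is in \Cref{thm:multiplicity-bound}; the only point requiring care here is the bookkeeping in the displayed inequality, namely choosing the constant $c$ in $r = \floor{c\log_\Delta \log n}$ small enough and the constant in $\log n \ge \Delta^{O(1)}$ large enough that $\lambda_2^{4r+2}\log B$ stays a small power of $\log n$ while $r$, and hence $B$, remains of order $\log_\Delta \log n$. We deliberately take $r \asymp \log_\Delta \log n$ rather than the somewhat larger $\log_{\lambda_2}\log n$ that the condition would permit, precisely so the final bound is phrased with $\log_\Delta \log n$; this loses nothing since $\lambda_2 \le \Delta$.
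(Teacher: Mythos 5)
Your argument is correct and takes essentially the same route as the paper's own proof (the precise version, \Cref{cor:multiplicity-bound-precise}): the same ball-growth estimate $|B_G^{(r)}(v)|\ge \min\{n,\,d_G(v)+1+\lfloor\frac{r-1}{3}\rfloor(\delta+1)\}$ extracted from connectivity and minimum degree, the same choice $r\asymp \log_\Delta\log n$ and $B\approx \lambda_2+\Omega(r\delta)$, and the same bookkeeping to verify the hypothesis $2^9 r\lambda_2^{4r+2}\log B\le \log_\Delta n$ of \Cref{thm:multiplicity-bound}. The one point you wave at rather than prove, the case $\lambda_2<1$, is in fact vacuous under $\log n\ge \Delta^{O(1)}$: by \Cref{obs:disjoint-supports} such a graph has no induced matching of size two, hence diameter at most $3$ and $n\le(\Delta+1)^3$, exactly as the paper notes.
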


We note that rather than just bounding the second eigenvalue multiplicity, our arguments can be modified to give a bound on the number of eigenvalues which are close to the second largest eigenvalue. In this approximate version of the problem, one cannot, in general, improve \Cref{cor:multiplicity-bound} by more than a constant factor since Haiman, Schildkraut, Zhang and Zhao \cite{schildkraut-paper} constructed connected graphs with bounded degree (so also bounded $\delta$ and $\lambda_2$) which have at least $\Omega\left(\frac{n}{\log \log n}\right)$ eigenvalues close to $\lambda_2$. On the other hand, for counting the actual second eigenvalue multiplicity of a connected graph on $n$ vertices with bounded degree, the best-known construction, due to \cite{schildkraut-paper}, only has a multiplicity of $n^{1/2-o(1)}$. Similarly, one can prove an approximate version of the bound of \Cref{thm:multiplicity-main} in the subexponential regime and this is tight up to a factor of roughly$\sqrt{\log_\Delta{n}}$, since McKenzie, Rasmussen, and Srivastava \cite{tcs-paper} constructed graphs with the number of eigenvalues close to $\lambda_2$ being at least $\frac{n}{\log_{\Delta}^{3/2-o(1)}n}.$

Finally, we want to mention that while our methods for bounding the second eigenvalue multiplicity certainly build on the framework and ideas introduced by a whole series of remarkable works on the topic, we do bring to the table several new ideas including numerous subtle but crucial changes to the previous arguments. Firstly, the arguments in the subexponential and superexponential regimes are quite different with the former relying more on probabilistic arguments, while the latter on more structured arguments. Similar to previous works \cite{yufei-paper,tcs-paper}, our arguments rely on the trace method combined with control on the growth of the largest eigenvalue of a graph upon the addition of a few vertices and edges. We note that this question is very classical in spectral graph theory in its own right and in fact, one can use a result of Nikiforov \cite{nikiforov-eigenvalue-growth} from 2007 to recover the results in this direction from both \cite{yufei-paper,tcs-paper}. Since Nikiforov's result is known to be tight, this precisely highlights the difficulty in breaking the barriers hit by previous works. Our first key lemma (\Cref{lem:partial-net}) gives a conditional improvement over the aforementioned result of Nikiforov which allows us to prove much better bounds under mild assumptions on the graph (in fact exponentially better in the case relevant for us, after carefully cleaning the graph to ensure we can apply our lemma). The second key lemma (\Cref{lem:small-eval}), despite its simplicity, turns out to be very useful as it simplifies arguments significantly and it is, in particular, crucial in the subexponential regime. It gives a very strong upper bound on the multiplicity of the second eigenvalue, provided that one can find a small subgraph with a large enough first eigenvalue. We believe that both of these lemmas hold great potential for further applications to eigenvalue multiplicity problems as well as to numerous closely related questions.

\subsection{Notation and preliminaries.}
Given a graph $G$, we denote by $V(G)$ and $E(G)$, its vertex and edge set, respectively. Given a vertex $v \in V(G)$, we denote its neighborhood by $N_G(v)$ and denote its degree by $d_G(v) = |N_G(v)|$. We let $\Delta(G)$ and $\delta(G)$ denote the maximum degree and minimum degree of $G$, respectively. Given a subset $S \subseteq V(G)$, we let $G[S]$ denote the induced subgraph of $G$ with vertex set $S$ and let $G \setminus S=G[V(G) \setminus S]$. Moreover, for a subgraph $H \subseteq G$, we let $G \setminus H=G[V(G) \setminus V(H)].$ We let $B_G^{(r)}(v)$ denote the ball of radius $r$ around a vertex $v$ in a graph $G$, namely the subgraph of $G$ induced on the set of all vertices of $G$ which can be reached by paths of length at most $r$ starting at $v$.
We denote by $A_G$, the adjacency matrix of $G$, and by $\lambda_1(G) \ge \lambda_2(G) \ge \ldots \ge \lambda_n(G)$ the eigenvalues of $G$ (so of $A_G$). We let $m_G(\lambda)$ denote the multiplicity of $\lambda$ as an eigenvalue of $G$ and let $m_G([a,b])$ denote the number of eigenvalues of $G$ which belong to the interval $[a,b]$ (counted with multiplicities).
We also let $\one$ denote the all ones vector and for $x \in S$, let $\textbf{e}_x \in \R^S$ denote the standard basis vector in the direction of $x$. We let $I_n, J_n$ denote the $n \times n$ identity and all ones matrices, respectively.
All of our logarithms are in base two unless otherwise specified. 
We will use standard results from linear algebra, such as the rank-nullity theorem, which appear in most undergraduate textbooks on the subject. We also use two standard tools from spectral graph theory. The first is the Cauchy interlacing theorem (sometimes also called Poincaré separation theorem) which, in its graph-theoretic formulation, states that if $H$ is an $m$-vertex induced subgraph of an $n$ vertex graph $G$, then 
$\lambda_{i}(G) \ge \lambda_i(H) \ge \lambda_{n-m+i}(G).$ We also use several parts of the Perron-Frobenius theorem. Namely, that for any graph $G$, one can always choose an eigenvector corresponding to $\lambda_1(G)$ with nonnegative coordinates, and if $G$ is connected, then $\lambda_1(G)>\lambda_2(G)$ and the eigenvector corresponding to $\lambda_1(G)$ can be taken to have all coordinates positive (referred to as the Perron eigenvector).

\section{New bounds on the second eigenvalue multiplicity}

The following lemma is our key new technical tool behind the proof of \Cref{thm:main}. It belongs to a whole class of results in spectral graph theory which attempt to understand the growth rate of the largest eigenvalue in a graph when it is extended by several vertices or edges. Perhaps the closest result in literature is a result of Nikiforov \cite{nikiforov-eigenvalue-growth} from 2007 in which he proves a lower bound on the increase in the largest eigenvalue when extending a graph into a connected one. His result is essentially tight and can be used to give an alternative proof of the results from \cite{yufei-paper}. Our lemma leverages additional information, which we establish as part of the argument, in order to achieve significantly better results than would be possible in the general case. It is also an extension of \cite[Lemma 4.3]{yufei-paper}, which corresponds to the case of $L=\emptyset$ below. 

At a high level, the lemma says that given a graph $K$, if we can split the vertices of a subgraph $F$ into two sets $L$ and $C$ such vertices in $L$ have (relatively) small degree (or even just $\lambda_1(F[L])$ is small) and vertices in $C$ which are all close to $K \setminus F$, then $\lambda_1(K)$ is noticeably larger than $\lambda_1(F)$.

\begin{lem} \label{lem:partial-net}
Let $0<\eps \le 1/4$. Given a non-empty graph $K$ and its subgraph $F$ with at least one vertex such that there is a partition $V(F)=C \sqcup L$ with every vertex in $C$ being within distance $\ell$ from $V(K)\setminus V(F)$ and  
$\lambda_1(F[L])\le \lambda_1(F) \left(1 -{4\eps}\right)$ (where if $L=\emptyset$, we may take $\eps=1$ in the following inequality). Then, $$\lambda_1(F)^{2\ell}\le \lambda_1(K)^{2\ell}-\eps^2.$$  
\end{lem}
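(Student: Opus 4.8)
The plan is to prove the contrapositive-style chain via the trace/eigenvector method: construct from a Perron eigenvector of $F$ a carefully chosen test vector $z$ supported on $V(K)$ and estimate $z^\top A_K z / z^\top z$ from below, showing it exceeds $\lambda_1(F)$ by the claimed amount after raising to the power $2\ell$. The natural starting point is to let $x$ be the (nonnegative, unit) Perron eigenvector of $F$, extended by zeros to $V(K)$, so that $x^\top A_K x \ge x^\top A_F x = \lambda_1(F)$. The key idea — already present in \cite[Lemma 4.3]{yufei-paper} for the case $L=\emptyset$ — is that since every vertex of $C$ is within distance $\ell$ of $V(K)\setminus V(F)$, a short walk of length $\le \ell$ from the mass sitting on $C$ reaches new vertices, and one should add a small multiple of the indicator of the vertices reachable from $\mathrm{supp}(x)\cap C$ to push the Rayleigh quotient strictly up. One way to organize this is to consider the vector $A_K^\ell x$ restricted appropriately, or to directly build $z = x + \eta\cdot w$ where $w$ captures the ``leakage'' out of $F$, and optimize the scalar $\eta$.

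The new ingredient relative to \cite{yufei-paper} is the presence of $L$: the mass of $x$ on $L$ need not propagate out of $F$, so one cannot argue that all of $x$ leaks. This is exactly why the hypothesis $\lambda_1(F[L]) \le \lambda_1(F)(1-4\eps)$ enters. First I would show that this spectral gap forces a definite fraction of the $\ell^2$-mass of $x$ to live on $C$: writing $x = x_C + x_L$ for the restrictions, the identity $\lambda_1(F) = x^\top A_F x = x_C^\top A_F x_C + 2 x_C^\top A_F x_L + x_L^\top A_F x_L$ together with $x_L^\top A_F x_L \le \lambda_1(F[L])\,\|x_L\|^2 \le \lambda_1(F)(1-4\eps)\|x_L\|^2$ and crude bounds on the cross term (using $\|A_F x\| \le \lambda_1(F)$ since $x$ is the eigenvector, hence $\|A_F x_C\|, \|A_F x_L\| \le \lambda_1(F)$ after splitting, or a Cauchy--Schwarz estimate) should yield a lower bound like $\|x_C\|^2 \ge c\eps$ or at least $\|x_C\| \cdot (\text{something}) \ge \eps$. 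The precise constant is where the $4\eps$ versus $\eps^2$ bookkeeping happens, and I expect the honest statement to be roughly $\|x_C\|^2 \ge \eps$ (this is consistent with the final $\eps^2$ loss and the restriction $\eps \le 1/4$).

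Given that a non-negligible part of $x$ sits on $C$, the next step is the leakage estimate: if $u \in C$ has $x_u > 0$, there is a vertex outside $F$ at distance $\le \ell$, so some entry of $(A_K^\ell x)$ outside $V(F)$ is bounded below in terms of $x_u$ and (a lower bound on) the nonzero structure — but since we have no lower bound on individual entries, the cleaner route is to compare $\|A_K^\ell x\|^2$ with $\|A_F^\ell x\|^2 = \lambda_1(F)^{2\ell}$ and show the difference is at least $\eps^2$, because the walk contributions that exit $F$ are genuinely new positive terms not counted in $A_F^\ell x$. Concretely, $\|A_K^\ell x\|^2 - \|A_F^\ell x\|^2$ counts pairs of length-$\ell$ walks in $K$ from $\mathrm{supp}(x)$ whose common endpoint lies outside $F$ (plus nonnegative cross terms), and one such endpoint is guaranteed for each $u \in C \cap \mathrm{supp}(x)$; summing $x_u^2$ over these and invoking $\|x_C\|^2 \ge \eps$ gives a lower bound, from which $\lambda_1(K)^{2\ell} \ge \|A_K^\ell x\|^2 \ge \lambda_1(F)^{2\ell} + \eps^2$ follows after checking the endpoints are distinct enough to avoid double-counting.

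The main obstacle I anticipate is precisely this last bookkeeping — ensuring the ``new'' walk-endpoints outside $F$ are not over-counted and that their contribution is at least $\eps^2$ rather than something smaller, since multiple vertices of $C$ may route to the same outside vertex and walks may re-enter $F$. I would handle this by choosing, for each relevant $u \in C$, a single shortest path to the outside and working with the contribution of just the terminal edge/vertex of that path, possibly passing to a sub-multiset of $C$ where the chosen exit-vertices are distinct, at the cost of only a constant factor (absorbed into the $\eps \le 1/4$ slack). A secondary technical point is the exact derivation of $\|x_C\|^2 \ge \eps$ from the gap hypothesis $\lambda_1(F[L]) \le \lambda_1(F)(1-4\eps)$; here the cross term $x_C^\top A_F x_L$ must be controlled, and I expect to use that $A_F x = \lambda_1(F) x$ implies $\lambda_1(F)\, x_L^\top x_C = x_L^\top A_F x = x_L^\top A_F x_C + x_L^\top A_F x_L$, and since $x$ has disjointly-supported parts $x_L^\top x_C = 0$, giving $x_L^\top A_F x_C = -x_L^\top A_F x_L \ge -\lambda_1(F[L])\|x_L\|^2 \ge 0$ wait — rather the sign works out so that plugging back into the Rayleigh expansion isolates $\|x_C\|^2$ cleanly. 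Getting the constant $4$ exactly right (versus $2$ or $8$) is routine once the inequalities are lined up, and the $L = \emptyset$ degeneration with $\eps = 1$ should drop out since then $x_C = x$ has unit norm, recovering the \cite{yufei-paper} bound $\lambda_1(F)^{2\ell} \le \lambda_1(K)^{2\ell} - 1$ up to the exact constant.
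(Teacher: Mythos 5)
Your skeleton is essentially the paper's: take the nonnegative unit Perron eigenvector $\mathbf{x}$ of $F$, split it as $\mathbf{x}=\mathbf{x}_C+\mathbf{x}_L$, force mass onto $C$ from the gap hypothesis on $\lambda_1(F[L])$, and gain $\eps^2$ from walks that exit $F$. But both key steps, as you leave them, would not go through. For the walk count, your framing via $\|A_K^\ell\mathbf{x}\|^2-\|A_F^\ell\mathbf{x}\|^2$ and ``endpoints outside $F$'' has a real defect that your proposed fix (choosing distinct exit vertices, losing a constant) does not address: for $u\in C$ at distance $d\le\ell$ from $V(K)\setminus V(F)$ there may be \emph{no} walk of length exactly $\ell$ from $u$ ending outside $F$ at all (in a bipartite $K$, parity forbids it when $\ell\not\equiv d \pmod 2$), so the per-vertex contribution you want can be zero; meanwhile the over-counting you worry about is a non-issue. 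The clean statement, and the one the paper proves, is the entrywise inequality $A_F^{2\ell}\le A_K^{2\ell}-I_C$: for each $u\in C$, the closed walk of length exactly $2\ell$ that follows a shortest path of length $d$ to an outside vertex, returns, and pads the remaining $2(\ell-d)$ steps by bouncing on an edge, is counted in $(A_K^{2\ell})_{uu}$ but not in $(A_F^{2\ell})_{uu}$. Counting closed walks per starting vertex makes distinctness of exit vertices and re-entry into $F$ irrelevant, and nonnegativity of $\mathbf{x}$ immediately yields $\lambda_1(F)^{2\ell}=\mathbf{x}^{\intercal}A_F^{2\ell}\mathbf{x}\le\lambda_1(K)^{2\ell}-\|\mathbf{x}_C\|^2$.

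The mass-on-$C$ bound is also not pinned down, and your guessed target $\|\mathbf{x}_C\|^2\ge\eps$ is false in general: the hypothesis only forces $\|\mathbf{x}_C\|/\|\mathbf{x}_L\|\ge 2\eps$, so when most mass sits on $L$ one only gets $\|\mathbf{x}_C\|$ of order $\eps$, i.e.\ $\|\mathbf{x}_C\|^2$ of order $\eps^2$ --- which is exactly why the lemma loses $\eps^2$ rather than $\eps$. Moreover the identity you attempt for the cross term is wrong: $\mathbf{x}_L^{\intercal}A_F\mathbf{x}=\lambda_1(F)\,\mathbf{x}_L^{\intercal}\mathbf{x}=\lambda_1(F)\|\mathbf{x}_L\|^2$, not $\lambda_1(F)\,\mathbf{x}_L^{\intercal}\mathbf{x}_C$ (which is zero), and the ensuing sign argument is abandoned mid-sentence. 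The correct handling is plain Cauchy--Schwarz, $2\mathbf{x}_C^{\intercal}A_F\mathbf{x}_L\le 2\|\mathbf{x}_C\|\,\|A_F\mathbf{x}_L\|\le 2\lambda_1(F)\|\mathbf{x}_C\|\,\|\mathbf{x}_L\|$, which together with $\mathbf{x}_L^{\intercal}A_F\mathbf{x}_L\le\lambda_1(F)(1-4\eps)\|\mathbf{x}_L\|^2$ rearranges (after disposing of the trivial case $\lambda_1(F)=0$, and of $\mathbf{x}_L=0$) to $\|\mathbf{x}_C\|/\|\mathbf{x}_L\|\ge 2\eps\ge\eps/\sqrt{1-\eps^2}$, whence $\|\mathbf{x}_C\|\ge\eps$ by monotonicity of $t/\sqrt{1-t^2}$; this is where $\eps\le 1/4$ enters. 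So the approach is right, but both estimates need to be redone along these lines before the argument is complete.
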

\begin{proof}
    If $\lambda_1(F)=0$, then we are done since $K$ being non-empty implies $\lambda_1(K) \ge 1$. So let us assume $\lambda_1(F)>0$ from now on.
    Let $N:=V(K)\setminus V(F).$
    Let $A_F$ be the adjacency matrix of $F$ padded with zeros in all rows and columns corresponding to $N$ to make it a $V(K) \times V(K)$ matrix. Let $\textbf{x}$ be a unit eigenvector of $\lambda_1(F)$ chosen so that its restriction to $N$ is the zero vector. Moreover, by the Perron-Frobenius Theorem, we can choose \textbf{x} to have all non-negative coordinates. Let also $\textbf{x}_C$ and $\textbf{x}_L$ be restrictions of $\textbf{x}$ to $C$ and $L$ respectively (with all other entries replaced by zeros), so $\textbf{x}=\textbf{x}_C+\textbf{x}_L$. 
    
    Since $\textbf{x}$ is a unit eigenvector of $A_F$ with eigenvalue $\lambda_1(F)$, we get $\textbf{x}^{\intercal}A_F\textbf{x}=\lambda_1(F)$ and $\textbf{x}^{\intercal}A_F^{2\ell}\textbf{x}=\lambda_1(F)^{2\ell}$. Now observe that $A_F^{2\ell} \le A_K^{2\ell}-I_C$, where the inequality is meant elementwise and $I_C$ is the $V(K) \times V(K)$ matrix with diagonal entries corresponding to $C$ equal to one and all other entries being zero (so identity when restricted to $C$). This follows since the diagonal entries of $A_F^{2r}$ and $A_K^{2\ell}$ count the number of closed walks of length $2\ell$ starting at a vertex in $F$ and $K$, respectively and for a vertex in $C$, the walk taking $\ell$ steps to a vertex in $N$ and back is counted for $K$ but not for $F$. Since the entries of $\textbf{x}$ are nonnegative, we have
    $$\lambda_1(F)^{2\ell}=\textbf{x}^{\intercal}A_F^{2r}\textbf{x}\le \textbf{x}^{\intercal}(A_K^{2\ell}-I_C)\textbf{x}=\textbf{x}^{\intercal}A_K^{2\ell}\textbf{x}-\textbf{x}^{\intercal}I_C\textbf{x}\le \lambda_1(K)^{2\ell}-\textbf{x}^{\intercal}_C\textbf{x}_C.$$
    So it will suffice to show that $\textbf{x}^{\intercal}_C\textbf{x}_C=\|\textbf{x}_C\|^2 \geq \eps^2$. If $\textbf{x}_L = 0$ (or $L=\emptyset$) then we are done since $1 = \|\textbf{x}\|^2 = \|\textbf{x}_C\|^2 + \|\textbf{x}_L\|^2=\|\textbf{x}_C\|^2$ and otherwise, observe that
    \begin{align*}\lambda_1(F)=\textbf{x}^{\intercal}A_F\textbf{x}&=\textbf{x}_C^{\intercal}A_F\textbf{x}_C+2\textbf{x}_C^{\intercal}A_F\textbf{x}_L+\textbf{x}^{\intercal}_LA_F\textbf{x}_L\\
    &\le \lambda_1(F)\textbf{x}^{\intercal}_C\textbf{x}_C+2\|\textbf{x}_C\| \cdot \|A_F\textbf{x}_L\|+ \lambda_1(F[L]) \textbf{x}^{\intercal}_L\textbf{x}_L,\\
    &\le \lambda_1(F)\|\textbf{x}_C\|^2+2\|\textbf{x}_C\| \cdot \lambda_1(F)\|\textbf{x}_L\|+\lambda_1(F) \left(1 -{4\eps} \right)\|\textbf{x}_L\|^2\\
    &=\lambda_1(F)+2\lambda_1(F)\|\textbf{x}_L\|^2\left(\frac{\|\textbf{x}_C\|}{\|\textbf{x}_L\|}-{2\eps}\right),
    \end{align*}
    so that, using the fact that $\lambda_1(F) > 0$, we conclude that $\frac{\|\textbf{x}_C\|}{\sqrt{1-\|\textbf{x}_C\|^2}} = \frac{\|\textbf{x}_C\|}{\|\textbf{x}_L\|} \geq 2\eps \ge \frac{\eps}{\sqrt{1-\eps^2}}$. In the above, the first inequality follows from the variational definition of eigenvalues for the first and third term (applied for $F[L]$ for the third term) and the Cauchy-Schwarz inequality for the second term. The desired result now follows since the function $\frac{t}{\sqrt{1-t^2}}$ is increasing on $(0,1)$.
\end{proof}

The following simple observation will prove useful in several places. It states that in a graph $G$, one cannot have two disjoint sets of vertices with no edges between them such that they both induce subgraphs with the largest eigenvalue larger than $\lambda_2(G)$.
\begin{lem}\label{obs:disjoint-supports}
    Let $G$ be a connected graph and let $V,U \subseteq V(G)$ be disjoint with no edges between $V$ and $U$. Then, $\lambda_1(G[U]) < \lambda_2(G)$ or $\lambda_1(G[V]) < \lambda_2(G)$ or $\lambda_1(G[U])=\lambda_1(G[V])=\lambda_2(G)$.
\end{lem}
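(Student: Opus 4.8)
The plan is to use the eigenvectors realizing $\lambda_1(G[U])$ and $\lambda_1(G[V])$ to build test vectors in $\R^{V(G)}$ and then invoke the variational characterization of eigenvalues together with the Perron--Frobenius theorem. First I would extend a Perron eigenvector $\mathbf{u}$ of $G[U]$ to a vector $\tilde{\mathbf{u}}\in\R^{V(G)}$ by setting all coordinates outside $U$ to zero, and similarly extend a Perron eigenvector $\mathbf{v}$ of $G[V]$ to $\tilde{\mathbf{v}}$ supported on $V$. Since there are no edges between $U$ and $V$, these two vectors have disjoint supports \emph{and} satisfy $\tilde{\mathbf{u}}^{\intercal}A_G\tilde{\mathbf{v}}=0$, i.e.\ they span a $2$-dimensional subspace on which the quadratic form $\mathbf{w}^{\intercal}A_G\mathbf{w}$ restricted to the span equals $\lambda_1(G[U])$ on $\tilde{\mathbf{u}}$, $\lambda_1(G[V])$ on $\tilde{\mathbf{v}}$, and is block-diagonal in between.

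The key step is then the standard min-max (Courant--Fischer) argument: on the $2$-dimensional subspace $W=\operatorname{span}\{\tilde{\mathbf{u}},\tilde{\mathbf{v}}\}$, the Rayleigh quotient is minimized by the smaller of $\lambda_1(G[U])$ and $\lambda_1(G[V])$ (because the form is diagonal on $W$ in the basis $\tilde{\mathbf{u}},\tilde{\mathbf{v}}$, which are moreover orthogonal as their supports are disjoint). Hence $\lambda_2(G)\ge \min_{\mathbf{w}\in W\setminus\{0\}}\frac{\mathbf{w}^{\intercal}A_G\mathbf{w}}{\mathbf{w}^{\intercal}\mathbf{w}}=\min\{\lambda_1(G[U]),\lambda_1(G[V])\}$. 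This already gives that not both $\lambda_1(G[U])$ and $\lambda_1(G[V])$ can exceed $\lambda_2(G)$.

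To obtain the sharper trichotomy, I would handle the boundary case: suppose $\min\{\lambda_1(G[U]),\lambda_1(G[V])\}=\lambda_2(G)$ but the two values are not equal, say $\lambda_1(G[U])=\lambda_2(G)<\lambda_1(G[V])$. Then the min-max bound forces $\tilde{\mathbf{u}}$ (up to scaling, the minimizer direction in $W$) to be an eigenvector of $A_G$ for $\lambda_2(G)$; but an eigenvector of $A_G$ for an eigenvalue $\lambda_2(G)<\lambda_1(G)$ must be orthogonal to the strictly positive Perron eigenvector of the connected graph $G$, which is impossible for the nonzero nonnegative vector $\tilde{\mathbf{u}}$. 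This contradiction shows that if the minimum equals $\lambda_2(G)$ then $\lambda_1(G[U])=\lambda_1(G[V])=\lambda_2(G)$, completing the three cases.

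The main obstacle is the careful treatment of this last case --- one must verify that a test vector achieving the min-max value genuinely is an eigenvector (this uses that the form is exactly diagonal on $W$, so no mixing can help), and then rule it out via strict positivity of the Perron eigenvector of the connected graph $G$; the first two paragraphs are routine once the disjoint-support observation $\tilde{\mathbf{u}}^{\intercal}A_G\tilde{\mathbf{v}}=0$ is in hand.
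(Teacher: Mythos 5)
Your proposal is correct, and it rests on the same key ingredients as the paper's proof: the Perron eigenvectors of $G[U]$ and $G[V]$ padded by zeros, the vanishing of the cross term $\tilde{\mathbf{u}}^{\intercal}A_G\tilde{\mathbf{v}}$, and a comparison against the strictly positive Perron eigenvector $\mathbf{z}$ of the connected graph $G$. The organization differs, though. You first apply Courant--Fischer to the $2$-dimensional subspace $W$ to get $\lambda_2(G)\ge\min\{\lambda_1(G[U]),\lambda_1(G[V])\}$ and then run a separate equality-case analysis, whereas the paper handles everything in one stroke: assuming both values are at least $\lambda_2(G)$ with one strictly larger, it picks a unit vector $a\tilde{\mathbf{v}}+b\tilde{\mathbf{u}}$ orthogonal to $\mathbf{z}$, notes that positivity of $\mathbf{z}$ and nonnegativity of $\tilde{\mathbf{u}},\tilde{\mathbf{v}}$ force $a,b\neq 0$, and concludes that its Rayleigh quotient strictly exceeds $\lambda_2(G)$, contradicting the variational characterization of $\lambda_2(G)$ over $\mathbf{z}^{\perp}$. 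One spot in your sketch needs fleshing out: that the minimizer $\tilde{\mathbf{u}}$ "is forced to be an eigenvector" does not follow from diagonality of the form on $W$ alone; the clean justification is to take a nonzero vector of $W\cap\mathbf{z}^{\perp}$ (which exists by dimension count), observe its Rayleigh quotient is at most $\lambda_2(G)$ yet at least $\min_W=\lambda_2(G)$, and then use the diagonality with distinct diagonal entries to conclude it is a multiple of $\tilde{\mathbf{u}}$, so $\tilde{\mathbf{u}}\perp\mathbf{z}$, contradicting positivity -- which is in effect the very computation the paper performs directly. The trade-off is that the paper's single test-vector computation is shorter and treats both cases at once, while your two-stage version makes the inequality $\lambda_2(G)\ge\min\{\lambda_1(G[U]),\lambda_1(G[V])\}$ explicit and isolates the boundary case, which some readers may find more transparent.
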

\begin{proof}
    Let $A$ be the adjacency matrix of $G$ and let $\textbf{z}$ be an eigenvector of $\lambda_1(G)$.
    Suppose, towards a contradiction, that both $\lambda_1(G[U]),\lambda_1(G[V]) \ge \lambda_2(G)$ and that at least one of these inequalities is strict. Let $\textbf{u}$ and $\textbf{v}$ be corresponding unit length eigenvectors of $G[U]$ and $G[V]$ respectively, padded with zeros so that $\textbf{v},\textbf{u}\in \R^{V(G)}$. Note that since $V$ and $U$ are disjoint, $\langle\textbf{v},\textbf{u}\rangle=0$. Let $a,b \in \R$ be chosen so that the vector $\textbf{x}=a\textbf{v}+b\textbf{u}$ is orthogonal to $\textbf{z}$ and is of unit length (so $a^2+b^2=1$). Via the Perron-Frobenius theorem, we may assume that all entries of $\textbf{u}$ and $\textbf{v}$ are nonnegative and since $G$ is connected, that all entries of $\textbf{z}$ are positive. It follows that neither $\textbf{v}$ nor $\textbf{u}$ can be orthogonal to $\textbf{z}$ and thus $a,b \neq 0$, so that we conclude
    $$\textbf{x}^{\intercal}A\textbf{x}=a^2 \textbf{v}^{\intercal}A\textbf{v} +b^2\textbf{u}^{\intercal}A\textbf{u}+2ab\textbf{v}^{\intercal}A\textbf{u}=a^2\lambda_1(G[V])+b^2\lambda_1(G[U])>\lambda_2(G)(a^2+b^2)=\lambda_2(G),$$
    where $\textbf{v}^{\intercal}A\textbf{u}=0$ follows from our assumption that there are no edges between vertices in the supports of $\textbf{v}$ and $\textbf{u}$. On the other hand, $\lambda_2(G)$ is the maximum of $\textbf{y}^\intercal A \textbf{y}$ over all unit length vectors $\textbf{y}$ which are orthogonal to $\textbf{z}$, so taking $\textbf{y} = \textbf{x}$ gives a contradiction.
\end{proof}

The following lemma says that in a graph $G$, either the second eigenvalue multiplicity is small or any subgraph of $G$ with largest eigenvalue larger than $\lambda_2(G)$ must be very large.   

\begin{lem}\label{lem:small-eval}
    Let $G$ be a connected graph with second eigenvalue $\lambda_2$ and maximum degree $\Delta$. If $H$ is a non-empty subgraph of $G$ with $\lambda_1(H) >  \lambda_2,$ then 
    $$m_G(\lambda_2) \le |H| \Delta.$$
\end{lem}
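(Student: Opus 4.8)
The plan is to use the eigenvector of $\lambda_2(G)$ restricted to $V(H)$ together with Lemma \ref{obs:disjoint-supports} to show that the $\lambda_2$-eigenspace cannot have too large a dimension. Let $W$ denote the eigenspace of $G$ associated with $\lambda_2$, so $\dim W = m_G(\lambda_2)$. The key observation is the following: if $\mathbf{y} \in W$ vanishes on every vertex within distance $1$ of $V(H)$ — equivalently on the vertex set $S := N_G[V(H)]$ of the closed neighborhood of $V(H)$ — then $\mathbf{y}$, viewed as supported on $V(G) \setminus S$, has no edges to $V(H)$, and $\lambda_1(G[V(H)]) = \lambda_1(H) > \lambda_2 = \lambda_1(G[\operatorname{supp}(\mathbf{y})]) \cdot$ (well, at least $\lambda_1$ of the support of $\mathbf{y}$ is $\le \lambda_2$ by interlacing, but here I actually want $\mathbf{y}$ to witness an eigenvalue $\ge \lambda_2$ on its support). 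I would argue: $\mathbf{y}$ restricted to its support is, by definition, in the $\lambda_2$-eigenspace of $G$, so the Rayleigh quotient $\mathbf{y}^\intercal A_G \mathbf{y} / \mathbf{y}^\intercal\mathbf{y} = \lambda_2$, and since $\mathbf{y}$ is supported off $N_G[V(H)]$, this equals $\mathbf{y}^\intercal A_{G'} \mathbf{y}/\mathbf{y}^\intercal\mathbf{y}$ where $G' = G \setminus N_G[V(H)]$; hence $\lambda_1(G') \ge \lambda_2$. Applying Lemma \ref{obs:disjoint-supports} with $U = V(H)$ and $V = \operatorname{supp}(\mathbf{y}) \subseteq V(G')$ (which are disjoint with no edges between them, since we removed the closed neighborhood of $V(H)$): we have $\lambda_1(G[U]) = \lambda_1(H) > \lambda_2$ strictly, so the lemma forces $\lambda_1(G[V]) < \lambda_2$, contradicting $\lambda_1(G[\operatorname{supp}(\mathbf{y})]) \ge \lambda_2$ unless $\mathbf{y} = 0$.

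Consequently, the restriction map $\rho : W \to \mathbb{R}^{S}$ sending $\mathbf{y} \mapsto \mathbf{y}|_{S}$, where $S = N_G[V(H)]$, is injective: its kernel consists precisely of the $\lambda_2$-eigenvectors vanishing on $S$, which we just showed is $\{0\}$. Therefore
\begin{equation*}
m_G(\lambda_2) = \dim W \le |S| = |N_G[V(H)]| \le |H|(\Delta + 1).
\end{equation*}
This is slightly weaker than the stated bound $|H|\Delta$, so I would tighten the final counting step: instead of the closed neighborhood, it should suffice to use just the neighborhood $N_G(V(H))$ outside $H$ together with $V(H)$ itself, i.e.\ observe that a $\lambda_2$-eigenvector vanishing on $V(H)$ alone already has no support issue — more carefully, one wants $\rho$ to land in $\mathbb{R}^{S'}$ for a set $S'$ of size at most $|H|\Delta$. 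The natural candidate is $S' = N_G(V(H)) \setminus V(H)$ has size $\le |H|\Delta$ but one must also control $\mathbf{y}$ on $V(H)$; alternatively $S' = V(H) \cup N_G(V(H))$ has size $\le |H| + |H|\Delta$. To get exactly $|H|\Delta$, I expect one argues that vanishing on $N_G(V(H)) \setminus V(H)$ already implies (via the eigenvalue equation at vertices of $H$, or a direct support argument) that $\mathbf{y}$ vanishes on $V(H)$ too, or one absorbs the $|H|$ term using $\Delta \ge 1$ and $|V(H)| \le |H|\Delta$ when $H$ has no isolated vertices — and since $\lambda_1(H) > \lambda_2 \ge 0$, $H$ has at least one edge, though it could still have isolated vertices as a subgraph; the cleanest route is probably to take $H$ minimal, so it is connected with no isolated vertices, whence $|V(H)| \le 2|E(H)| \le |V(H)|\Delta$, and then $|V(H)| + |V(H)|\Delta$ can be bounded appropriately, or simply to restrict to the edge-boundary.

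The main obstacle I anticipate is exactly this last bookkeeping: getting the clean constant $|H|\Delta$ rather than $|H|(\Delta+1)$ requires either passing to a minimal such $H$ (so $H$ is connected, hence $|V(H)| \le |E(H)| + 1$ and more importantly vanishing on the external neighborhood plus using the eigen-equation propagates the zero across $H$), or a small separate argument that a $\lambda_2$-eigenvector vanishing on the external neighborhood $N_G(V(H)) \setminus V(H)$ must vanish on $V(H)$ — which again follows from Lemma \ref{obs:disjoint-supports} applied with $U = V(H)$, since such a $\mathbf{y}$ restricted to $V(G) \setminus (N_G(V(H)) \setminus V(H)) \supseteq V(H)$ would then either be zero on $V(H)$ or give $\lambda_1 \ge \lambda_2$ on a set disjoint-with-no-edges from nothing... this needs care. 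The conceptual content (injectivity of restriction via the disjoint-supports lemma) is straightforward; only the optimization of the vertex count to hit $|H|\Delta$ exactly takes a moment.
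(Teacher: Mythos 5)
Your core argument is essentially the paper's: both proofs hinge on applying \Cref{obs:disjoint-supports} to $V(H)$ versus the vertices at distance at least two from $V(H)$, and on bounding the size of the closed neighborhood $N_G[V(H)]$; whether one finishes via injectivity of the restriction map on the $\lambda_2$-eigenspace (your route) or by deleting $N_G[V(H)]$ and invoking the Cauchy interlacing theorem (the paper's route) is a cosmetic difference. Your Rayleigh-quotient step, showing that a $\lambda_2$-eigenvector $\mathbf{y}$ vanishing on $N_G[V(H)]$ would force $\lambda_1(G[\operatorname{supp}(\mathbf{y})]) \ge \lambda_2$ and hence contradict \Cref{obs:disjoint-supports} (since $\lambda_1(G[V(H)]) > \lambda_2$ strictly), is correct, and it rigorously yields $m_G(\lambda_2) \le |N_G[V(H)]| \le |H|(\Delta+1)$.

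The genuine gap is the final counting: you never establish the stated bound $|H|\Delta$. Of the fixes you float, the propagation claim --- that a $\lambda_2$-eigenvector vanishing on the external boundary $N_G(V(H)) \setminus V(H)$ must also vanish on $V(H)$ --- is unjustified and false in general: vanishing on the boundary only shows that $\mathbf{y}$ restricted to $V(H)$ is a $\lambda_2$-eigenvector of $G[V(H)]$, which is perfectly consistent with $\lambda_1(G[V(H)]) > \lambda_2$, as $\lambda_2$ may simply occur lower in the spectrum of $G[V(H)]$; and the bookkeeping suggestions such as $|V(H)| \le 2|E(H)| \le |V(H)|\Delta$ do not remove the extra $|H|$ term. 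The missing observation is the paper's one-liner: replace $H$ by a vertex-minimal non-empty subgraph with $\lambda_1 > \lambda_2$, which is therefore connected; then every vertex of $H$ has at least one neighbor inside $V(H)$, hence at most $\Delta - 1$ neighbors outside, so $|N_G[V(H)]| \le |H| + (\Delta-1)|H| = \Delta|H|$, and your injectivity (or, equivalently, interlacing after deleting these vertices, using that the remaining graph has largest eigenvalue strictly below $\lambda_2$ by \Cref{obs:disjoint-supports}) then gives exactly $m_G(\lambda_2) \le \Delta|H|$.
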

\begin{proof}
First, we may assume without loss of generality that $H$ is a vertex-minimal non-empty subgraph satisfying $\lambda_1(H) >  \lambda_2$. In particular, this allows us to assume $H$ is connected. This implies that there are at most $|H|+(\Delta-1)|H|=\Delta|H|$ vertices at distance at most one from $H$.

Let $H'$ be the induced subgraph of $G$ consisting only of vertices at distance at least two from $H$. This guarantees that $H$ and $H'$ are vertex disjoint and that there are no edges between them, so \Cref{obs:disjoint-supports} implies we must have $\lambda_1(H') < \lambda_2$. This implies that by removing up to $\Delta|H|$ vertices at distance at most one from $H$, we obtain a graph with the largest eigenvalue smaller than $\lambda_2$. The desired conclusion now follows by the Cauchy interlacing theorem.
\end{proof}

The following theorem is behind our new bound on the multiplicity of the second eigenvalue in the superexponential regime. 
At a high level, the theorem says that given a graph on $n$ vertices with second eigenvalue $\lambda_2$ and maximum degree $\Delta$, if $B$ is the minimum size of a ball of radius $r$ centered at a vertex with degree larger than $\lambda_2-1$ and $n$ is large enough compared to $r,B,\lambda_2$, and $\Delta$, then the second eigenvalue multiplicity is at most $\frac{n}{B-1}$. Note that the parameter $B$ here can be thought of as some kind of measure of local expansion of the graph.

The general strategy behind the proof will be to remove a few choice vertices from our graph in such a way as to ``locally'' decrease the largest eigenvalue substantially below $\lambda_2$. We then convert this local bound to a global bound on the eigenvalue multiplicity of $\lambda_2$ for the reduced graph. The Cauchy interlacing theorem then tells us that bringing back the removed vertices can only increase the multiplicity by the number of vertices we are reintroducing, thereby yielding the desired result. We note that at this high level, our argument is similar to that of \cite{yufei-paper}, with the main differences coming from how we choose the vertices to remove and how we control the impact of this on the eigenvalues (at a local level).

We start by showing that balls of some large radius $S$ around any vertex must have the largest eigenvalue at most $\lambda_2$. Next, we consider a maximal collection of disjoint large balls of radius $r$ (each of size at least $B$). We then remove the centers of these balls from the graph and mark any vertex within distance $2r$ from one of them as ``covered''. By maximality, we now know that among the uncovered vertices, there are no large balls of radius $r$ around a vertex. 
So by our assumption, all uncovered vertices must have low degree. 
This brings us to the setting of \Cref{lem:partial-net} which guarantees that by removing our center vertices, we can decrease the largest eigenvalue of any ball of radius $S$ substantially below $\lambda_2$.

\begin{thm}\label{thm:multiplicity-approx-bound}
    Let $G$ be a connected $n$-vertex graph with second eigenvalue $\lambda_2\ge 1$ and $\Delta\ge \Delta(G)$. Suppose that for some integer $r \ge1$, any vertex of $G$ with degree larger than $\lambda_2-1$ satisfies $|B_G^{(r)}(v)|\ge B \ge 1$. Then, provided $2^{9}r\lambda_2^{4r+2} \log B \le \log_{\Delta} n $, we have
    $$m_G(\lambda_2)\le \frac{n}{B-1}.$$ 
\end{thm}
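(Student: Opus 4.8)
We may assume $B\ge 2$, since the bound is vacuous otherwise. Put $\eps:=\tfrac{1}{8\lambda_2}\in(0,\tfrac14]$ and $\delta:=\tfrac{1}{256\,r\,\lambda_2^{4r+1}}$ (so $\delta\le\tfrac12$), and fix an integer $S$ with $256\,r\,\lambda_2^{4r+2}\log B\le S$ and $S+2r+2+\log_\Delta(B-1)\le\log_\Delta n$; such an $S$ exists because the hypothesis $2^9 r\lambda_2^{4r+2}\log B\le\log_\Delta n$ leaves ample room once one uses $r\ge1$, $\lambda_2\ge1$, $B\ge2$ to absorb the lower-order terms $2r+2+\log_\Delta(B-1)$. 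The idea, similar at a high level to \cite{yufei-paper}, is to delete a small vertex set so that in the remaining graph $G'$ every ball of radius $S$ has largest eigenvalue at most $\lambda_2-\delta$; a trace computation then bounds $m_{G'}([\lambda_2,\infty))$, and Cauchy interlacing transfers the bound back to $G$.

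First I would dispose of an easy case: if $\lambda_1\!\left(B_G^{(S+2r)}(v)\right)>\lambda_2$ for some $v$, then \Cref{lem:small-eval} gives $m_G(\lambda_2)\le |B_G^{(S+2r)}(v)|\cdot\Delta\le\Delta^{S+2r+2}\le\tfrac{n}{B-1}$ by the choice of $S$, and we are done. So assume $\lambda_1\!\left(B_G^{(S+2r)}(v)\right)\le\lambda_2$ for every vertex $v$. Now choose a maximal collection of pairwise vertex-disjoint balls $B_G^{(r)}(c_1),\dots,B_G^{(r)}(c_m)$ whose centers have degree larger than $\lambda_2-1$; by hypothesis each has at least $B$ vertices, so (disjointness) $m\le n/B$. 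Set $G':=G\setminus\{c_1,\dots,c_m\}$ and call a vertex \emph{covered} if it lies within distance $2r$ in $G$ of some $c_i$. By maximality, any vertex of degree larger than $\lambda_2-1$ must be covered (such an uncovered vertex is distinct from every $c_i$ and its radius-$r$ ball, of size $\ge B$, would be disjoint from all the $B_G^{(r)}(c_i)$, contradicting maximality), so \emph{every uncovered vertex has degree at most $\lambda_2-1$}.

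The heart of the matter is to show $\lambda_1\!\left(B_{G'}^{(S)}(v)\right)\le\lambda_2-\delta$ for every $v\in V(G')$. Fix such a $v$ and set $F:=B_{G'}^{(S)}(v)$, viewed as an induced subgraph of $K:=B_G^{(S+2r)}(v)$. If $\lambda_1(F)<\lambda_2-\tfrac12$ we are done since $\delta\le\tfrac12$. Otherwise, partition $V(F)=C\sqcup L$ into its covered and uncovered vertices. Every $x\in C$ reaches some $c_i\in V(K)\setminus V(F)$ along a path of length $\le 2r$ lying inside $K$ (all its vertices are within distance $S+2r$ of $v$), so the distance hypothesis of \Cref{lem:partial-net} holds with $\ell=2r$; and since every vertex of $L$ has degree $\le\lambda_2-1$ in $G$, we get $\lambda_1(F[L])\le\Delta(F[L])\le\lambda_2-1\le(\lambda_2-\tfrac12)(1-4\eps)\le\lambda_1(F)(1-4\eps)$ by the choice of $\eps$. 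Thus \Cref{lem:partial-net} applies and yields $\lambda_1(F)^{4r}\le\lambda_1(K)^{4r}-\eps^2\le\lambda_2^{4r}-\eps^2$, whence, using $(1-x)^t\le 1-tx$ for $t\in(0,1]$, $\lambda_1(F)\le(\lambda_2^{4r}-\eps^2)^{1/(4r)}\le\lambda_2-\tfrac{\eps^2\lambda_2^{1-4r}}{4r}=\lambda_2-\delta$, as claimed.

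Finally, the trace step. A closed walk of length $2S$ from $v$ in $G'$ never leaves $B_{G'}^{(S)}(v)$, so $(A_{G'}^{2S})_{vv}\le\lambda_1\!\left(B_{G'}^{(S)}(v)\right)^{2S}\le(\lambda_2-\delta)^{2S}$, giving $\tr(A_{G'}^{2S})\le n(\lambda_2-\delta)^{2S}$, while $\tr(A_{G'}^{2S})\ge m_{G'}([\lambda_2,\infty))\,\lambda_2^{2S}$. Hence $m_{G'}([\lambda_2,\infty))\le n(1-\delta/\lambda_2)^{2S}\le n e^{-2S\delta/\lambda_2}\le n/B^2$, using $S\ge(\lambda_2/\delta)\log B$. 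Deleting the $m\le n/B$ centers changes the number of eigenvalues in $[\lambda_2,\infty)$ by at most $m$, so by Cauchy interlacing $m_G(\lambda_2)\le m_G([\lambda_2,\infty))\le m_{G'}([\lambda_2,\infty))+m\le n/B^2+n/B\le n/(B-1)$, which is the asserted bound. The main obstacle is the correct setup of \Cref{lem:partial-net} in the third step: one must simultaneously arrange that the covered vertices are genuinely reachable from deleted centers \emph{within} $K$, and that the uncovered vertices have degree small enough to bound $\lambda_1(F[L])$ away from $\lambda_1(F)$ — this is exactly where the degree threshold $\lambda_2-1$ in the hypothesis, together with connectivity (used inside \Cref{lem:small-eval}), are essential; everything else is bookkeeping to check that $2^9 r\lambda_2^{4r+2}\log B\le\log_\Delta n$ leaves enough slack to choose $S$ both large enough for the trace bound and small enough for \Cref{lem:small-eval}.
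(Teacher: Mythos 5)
Your proposal is correct and follows essentially the same route as the paper's proof: the dichotomy via \Cref{lem:small-eval}, a maximal family of disjoint radius-$r$ balls whose centers are deleted, \Cref{lem:partial-net} applied with $\ell=2r$ and $\eps=1/(8\lambda_2)$ to covered/uncovered vertices, the trace bound on closed walks of length $2S$, and Cauchy interlacing with $n/B+n/B^2\le n/(B-1)$. The only differences are cosmetic (you fix $S$ in an arithmetic window instead of taking the minimal radius at which a ball's top eigenvalue exceeds $\lambda_2$, center the balls at high-degree vertices, and linearize the eigenvalue gap to $\lambda_2-\delta$ before the trace step), and your bookkeeping checks out.
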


\begin{proof}
The desired bound is trivial unless $B>2$. This in turn implies $\Delta \ge 2$ as otherwise $|B_G^{(r)}(v)|\le 2$ for any $r$ and $v$. Note also that since $G$ is connected, the Perron-Frobenius theorem implies that $\lambda_1(G)>\lambda_2.$

Let $S$ be the minimum integer for which $\lambda_1(B_G^{(S+1)}(u))>\lambda_2$ for some $u \in V(G)$ (note that such an $S\ge 0$ exists since $\lambda_1(G)>\lambda_2$ and for large enough radius $B_G^{(S+1)}(u)=G$). In particular, by our choice of $S$, for any $v \in V(G)$ we have $\lambda_1(B_G^{(S)}(v))\le \lambda_2$. 
\Cref{lem:small-eval} implies that $m_G(\lambda_2) \le |B_G^{(S+1)}(u)|\Delta$ and so, we are done unless 
\begin{equation}\label{eq:s-lwr-bnd}
\frac{n}{B-1}<|B_G^{(S+1)}(u)|\Delta \le \Delta^{S+1}+\Delta^{S+2}\le \Delta^{S+3}.
\end{equation}

Let $\mathcal{B}$ be a maximal collection of disjoint balls in $G$ having radius $r$ and size at least $B$ and let $N$ denote the set of their center vertices. Observe that since each ball has size at least $B$, we have
\begin{equation}\label{eq:N_2-upr-bnd}
    |N| \le \frac{n}{B}.
\end{equation}
Let $C'$ consist of all vertices at distance at most $2r$ from a vertex in $N$ and let $L'=V(G) \setminus C'$.  
Note that any $v\in L'$ is at a distance of at least $r+1$ from any vertex belonging to a ball in $\mathcal{B}$. Therefore, $B_G^r(v)$ is disjoint from all balls in $\mathcal{B}$ and by maximality of $\mathcal{B}$, it must have size less than $B$. By our assumption, this implies that $d_G(v) \le \lambda_2-1$.

Let $R=S-2r$ and let $H$ be the graph obtained by deleting vertices in $N$ from $G$. 
By counting closed walks of length $2R$ in $H$, we get 
    \begin{equation}\label{eq:walks}
    m_{H}(\lambda_2)\cdot \lambda_2^{2R} \le \sum_{i=1}^{|H|} \left(\lambda_i\left(H\right)\right)^{2R}=\tr A_{H}^{2R} =\sum_{v \in V\left(H\right)}\textbf{e}_v^{\intercal} \: A_{B_{H}^{(R)}(v)}^{2R}\:\textbf{e}_v 
    \le \sum_{v \in V\left(H\right)} \left(\lambda_1\left(B_{H}^{(R)}(v)\right)\right)^{2R},
    \end{equation}
since $\textbf{e}_v^{\intercal} \: A_{B_{H}^{(R)}(v)}^{2R}\:\textbf{e}_v$ is the diagonal entry of $A_{B_{H}^{(R)}(v)}^{2R}$ corresponding to $v$ and hence equals the number of closed walks of length $2R$ starting and ending at $v$ in $H$ (since any such walk must be contained within $B_{H}^{(R)}(v)$).

    Now let $v \in V(H)$ and $F:=B_{H}^{(R)}(v)$. In view of \eqref{eq:walks}, observe that to bound the multiplicity of $\lambda_2$ in $H$, it will suffice for us to obtain an upper bound on $\lambda_1\left(F\right)$ in terms of $\lambda_2$, as follows.
    
    \begin{claim*}
        If we set $\eps=\frac{1}{8\lambda_2}$, we have $$\lambda_1(F)^{4r} \le \lambda_2^{4r}-\eps^2.$$
    \end{claim*}
    \begin{cla_proof}
    First note that $\lambda_1(F) \leq \lambda_1\left( B^{(S)}_G(v) \right) \leq \lambda_2$. If $\lambda_1\left(F\right) < \lambda_2-1/2$, the desired bound easily holds, so we may assume that $\lambda_1\left(F\right) \ge \lambda_2-1/2.$ We next apply
    \Cref{lem:partial-net} 
    with $\ell=2r$, $\eps=1/(8\lambda_2)$, $K=B_G^{(S)}(v)$, $F = B^{(R)}_H(v)$, $C=C' \cap V\left(F\right)$, and $L=L' \cap V\left(F\right)$. Indeed, we can do this since every vertex $u \in C$ is at a distance of at most $2r$ to some vertex $w \in N$, so that $w \in  V\left (K \right) \setminus V\left(F\right) $ and also since every vertex of $L'$ has a degree of at most $\lambda_2-1\le \lambda_1(F)-1/2\le \lambda_1(F) \left(1 -4\eps \right)$, so in particular the same bound holds for $\lambda_1(F\left[L\right])$. 
    This implies 
     $$\lambda_1(F)^{4r}=\left(\lambda_1\left(B_{H}^{(R)}(v)\right)\right)^{4r} \le \left(\lambda_1\left(B_G^{(S)}(v)\right)\right)^{4r}-\eps^2\le \lambda_2^{4r}-\eps^2,$$
     as desired.\end{cla_proof}

    Using the claim to continue the estimate in \eqref{eq:walks}, we get 
    \begin{equation*}
    m_{H}(\lambda_2)\cdot \lambda_2^{2R}
    \le \sum_{v \in V(H)} \left(\lambda_1\left(B_{H}^{(R)}(v)\right)\right)^{2R}\le n \cdot \left(\lambda_2^{4r}-\eps^2\right)^{R/(2r)},
    \end{equation*}
    which in turn implies 
    \begin{align*}
        m_{H}(\lambda_2)&\le n \cdot \left(1-\frac{\eps^2}{\lambda_2^{4r}}\right)^{R/(2r)}
        \le n \cdot e^{-\eps^2\lambda_2^{-4r}\cdot R/(2r)}=n \cdot e^{-\lambda_2^{-4r-2}\cdot R/(2^7 r)}\le n \cdot e^{-2 \log B}\le \frac{n}{B^2},
    \end{align*}
    where in the penultimate inequality we used \eqref{eq:s-lwr-bnd},   which gives that $R=S-2r \ge \log_{\Delta} n-\log_{\Delta} (B-1)-3-2r \ge 2^8 r\lambda_2^{4r+2}\log B$.
    
    Now the Cauchy interlacing theorem combined with \eqref{eq:N_2-upr-bnd} implies 
     $$m_G(\lambda_2) \le |N|+m_H(\lambda_2)\le \frac{n}{B} + \frac{n}{B^2}\le \frac{n}{B-1},$$
    as desired.
    \end{proof}

We note that with a slight change in the above argument, we could have counted the number of eigenvalues (counted with multiplicities) in the interval $\left[\left(1-\frac{\log B}{4\log_{\Delta} n}\right)\lambda_2,\lambda_2\right].$ Indeed, the only significant change is that if \eqref{eq:s-lwr-bnd} fails to hold, then we are not done by the Cauchy interlacing theorem. Instead, if there is a $u$ such that $B_G^{(S+1)}(u)$ is small, while its largest eigenvalue is larger than $\lambda_2$, we can simply add $B_G^{(S+1+r)}(u)$ into $N$ in the above argument and continue as before. Another slight change is in \eqref{eq:walks}, where instead of $\lambda_2^{2R}$, we get as a lower bound the lower limit of the interval at hand, which determines how wide the interval we can take.

We also remark that in the above theorem, the assumption $\lambda_2 \ge 1$ is more of a technicality. Indeed, if $\lambda_2<1$ the graph cannot contain an induced matching of size $2$ by \Cref{obs:disjoint-supports}. This implies that the diameter of the graph is at most 3 and in particular that the number of vertices is at most $\Delta^3+1$.

The following corollary is a more precise version of \Cref{cor:multiplicity-bound}. It is a simple consequence of the previous theorem combined with the tiny amount of expansion we obtain from the connectivity assumption.
\begin{cor}\label{cor:multiplicity-bound-precise}
    Let $G$ be a connected $n$-vertex graph with second eigenvalue $\lambda_2,$ $\Delta=\Delta(G)\ge 2$, and $\delta=\delta(G)$.  Then, provided $\log n \ge \Delta^{48}$, we have
    $$m_G\left(\lambda_2\right)\le \frac{n}{\lambda_2+\delta \log_{\Delta} \log n/42   }.$$ 
\end{cor}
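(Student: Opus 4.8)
The plan is to derive this as a direct application of \Cref{thm:multiplicity-approx-bound} with a carefully chosen radius $r$, using only the weak expansion that connectivity forces. First I would handle trivial cases: if $\lambda_2 < 1$, then by the remark following \Cref{thm:multiplicity-approx-bound} (or \Cref{obs:disjoint-supports}), the diameter is at most $3$, so $n \le \Delta^3 + 1$, contradicting $\log n \ge \Delta^{48}$ for $\Delta \ge 2$; hence we may assume $\lambda_2 \ge 1$. Next, the key observation is that if $v$ is any vertex of $G$ with $d_G(v) > \lambda_2 - 1$, i.e.\ $d_G(v) \ge \lceil \lambda_2 \rceil$, then since $G$ is connected we can grow a ball around $v$: in each successive layer, as long as we have not yet exhausted all of $V(G)$, connectivity guarantees at least one new vertex is added, and that new vertex has degree at least $\delta$, so it contributes at least $\delta - 1$ further new vertices (in a rough greedy count; one must be slightly careful to avoid double counting, but a clean lower bound of the form $|B_G^{(r)}(v)| \ge \lambda_2 + \Omega(\delta r)$ holds as long as the ball has not swallowed the whole graph). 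So I would set $B := \lambda_2 + c\,\delta\, r$ for a suitable small constant $c$, and verify $|B_G^{(r)}(v)| \ge B$ for every such high-degree $v$ — either the ball grows at this rate, or it already equals $G$ and is therefore even larger (since $n \ge B$ must hold or the conclusion is vacuous).

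With this $B$ in hand, \Cref{thm:multiplicity-approx-bound} gives $m_G(\lambda_2) \le \frac{n}{B-1}$ provided the hypothesis $2^9 r \lambda_2^{4r+2} \log B \le \log_\Delta n$ holds. The game is now to choose $r$ as large as possible subject to this constraint, since $B$ grows linearly in $r$. Taking logarithms, the constraint reads roughly $(4r+2)\log \lambda_2 + \log r + \log\log B + 9 \lesssim \log\log_\Delta n = \log\log n - \log\log\Delta$. If $\lambda_2$ is bounded away from growing too fast relative to $\log\log n$, one can afford $r \approx \frac{\log\log n}{8\log\lambda_2} = \Theta(\log_{\lambda_2}\log n)$; and more crudely, since $\log n \ge \Delta^{48} \ge \lambda_2^{48}$ (as $\lambda_2 \le \Delta$), even a small constant choice like $r$ on the order of a constant already makes the hypothesis hold, but to get the claimed $\delta \log_\Delta \log n$ term in the denominator one wants $r = \Theta(\log_\Delta\log n)$. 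I would pick $r := \lfloor \log_\Delta\log n \rfloor$ (so $\Delta^r \le \log n$, making $\lambda_2^{4r} \le \Delta^{4r} \le (\log n)^4$ manageable), check that $2^9 r \lambda_2^{4r+2}\log B$ is then at most $\log_\Delta n$ using $\log n \ge \Delta^{48}$ to absorb all the lower-order factors (this is where the constant $48$ and the constant $42$ in the denominator get pinned down), and conclude $m_G(\lambda_2) \le \frac{n}{B-1} = \frac{n}{\lambda_2 + c\delta\log_\Delta\log n - 1} \le \frac{n}{\lambda_2 + \delta\log_\Delta\log n / 42}$ for the appropriate constant bookkeeping.

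The main obstacle I anticipate is the arithmetic of simultaneously (a) getting the ball-growth lower bound $|B_G^{(r)}(v)| \ge \lambda_2 + \Omega(\delta r)$ honest — one must ensure that during the first $r$ steps the ball genuinely gains $\Omega(\delta)$ vertices per step and has not prematurely become all of $G$ (if it has, then $n$ is small and one can fall back on a cruder argument), and that the additive $\lambda_2$ really is there, coming from the high degree of $v$ itself; and (b) threading the quantitative needle so that the exponent $48$ in $\log n \ge \Delta^{48}$ is exactly enough to satisfy $2^9 r\lambda_2^{4r+2}\log B \le \log_\Delta n$ with $r = \Theta(\log_\Delta\log n)$, while still leaving the constant $42$ in the final bound. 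Neither step is conceptually hard, but the constants must be chosen consistently; everything else is a mechanical substitution into \Cref{thm:multiplicity-approx-bound}.
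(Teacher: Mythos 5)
Your overall route is exactly the paper's: dispose of the case $\lambda_2<1$ via \Cref{obs:disjoint-supports}, prove a ball-growth bound of the form $|B_G^{(r)}(v)|\ge d_G(v)+1+\Omega(\delta r)$ from connectivity and minimum degree, and feed $B=\lambda_2+\Omega(\delta r)$ with $r=\Theta(\log_\Delta\log n)$ into \Cref{thm:multiplicity-approx-bound}. However, one concrete step would fail as written: the choice $r=\floor{\log_\Delta\log n}$ does not satisfy the hypothesis of \Cref{thm:multiplicity-approx-bound}. When $\lambda_2$ is of order $\Delta$ (which is allowed), $\lambda_2^{4r}$ is of order $\Delta^{4\log_\Delta\log n}=(\log n)^4$, which dwarfs $\log_\Delta n\le \log n$, so the required inequality $2^9 r\lambda_2^{4r+2}\log B\le \log_\Delta n$ cannot hold no matter how large the exponent $48$ is; calling $(\log n)^4$ ``manageable'' is where the argument breaks. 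One must take $r\le c\log_\Delta\log n$ with $4c<1$, so that $\lambda_2^{4r}\le(\log n)^{4c}$ leaves a positive power of $\log n$ to absorb the remaining factors via $\log n\ge \Delta^{48}$; the paper takes $r=\floor{\tfrac16\log_\Delta\log n}$. Your own heuristic $r\approx \frac{\log\log n}{8\log\lambda_2}$ already points at the right constant, so this is a repairable slip rather than a wrong approach, but the verification you propose to carry out would not go through with your stated $r$.

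A smaller point: the per-layer greedy count (``each new vertex contributes at least $\delta-1$ further new vertices'') is not valid layer by layer, since a vertex at distance $i$ from $v$ may have all its neighbors at distance $i-1$ or $i$. The clean way to get your hedged $\Omega(\delta r)$ bound is the paper's: if some vertex $v_r$ lies at distance exactly $r$ (otherwise the ball is all of $G$, and $n\ge 2^{\Delta^{48}}\gg B$ so there is nothing to check), take a geodesic $v=v_0,\ldots,v_r$ and note that the closed neighborhoods of $v_0,v_3,v_6,\ldots$ are pairwise disjoint and contained in $B_G^{(r)}(v)$, giving $|B_G^{(r)}(v)|\ge d_G(v)+1+\floor{\frac{r-1}{3}}(\delta+1)$. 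Thus the growth is $\Omega(\delta)$ per three layers rather than per layer; this factor, combined with the $\tfrac16$ in the choice of $r$, is exactly how the paper's constant $42$ arises.
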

\begin{proof}
First, note that we must have $\lambda_2 \ge 1$. Indeed, otherwise, $G$ cannot contain an induced matching of size $2$ by \Cref{obs:disjoint-supports}, which implies that the diameter of $G$ is at most 3 and, in particular, that the number of vertices in $G$ is at most $(\Delta+1)^3  < 2^{\Delta^{48}}$.

Let $r=\floor{\frac16\log_{\Delta} \log{n}}\ge 8$ and $B=\lambda_2+\delta r/6+1$ so that $\log B \le \lambda_2+\delta r/6\le \Delta r/3$. This implies that
$$2^8 r\Delta^{4r+2}  \log \Delta \log B\le \frac{2^6}{3^3}(\log \log n)^2\Delta^{4r+3}\le 3 (\log \log n)^2 (\log n)^{2/3}\Delta^{3} \le\log n.$$

Now observe that in a connected graph with minimum degree $\delta$, we have $\left|B_G^{(r)}(v)\right|\ge \min\{d_G(v)+1+\floor{\frac{r-1}{3}}(\delta+1),n\}$. Indeed, if $B_G^{(r)}(v)\setminus B_G^{(r-1)}(v)=\emptyset,$ then $\left|B_G^{(r)}(v)\right|= n$. Otherwise, there exists a vertex $v_r$ at distance $r$ from $v$. Let $v=v_0,v_1,\ldots,v_r$ be vertices making a path from $v$ to $v_r$. Since $v_r$ is at a distance of precisely $r$ from $v$, we know that any vertex $v_{i}$ together with its neighborhood $N_G(v_i)$ is restricted to $B_G^{(i+1)}(v) \setminus B_G^{(i-2)}(v)$. It follows that the sets $N_G(v_{3i}) \cup \{v_{3i}\}$ for $i = 0, 1, \ldots, \floor{\frac{r-1}{3}}$ are all disjoint and lie in $B_G^{(r)}(v)$. Since $|N_G(v_{3i}) \cup \{v_{3i}\}| = d_G(v_{3i}) + 1 \geq \delta + 1$, we conclude that $\left|B_G^{(r)}(v)\right| \geq d_G(v) + 1 + \floor{\frac{r-1}{3}}(\delta+1)$.

So for any vertex $v$ with degree larger than $\lambda_2-1$, we have $\left|B_G^{(r)}(v)\right|\ge \lambda_2+r\delta/6+1=B$ and therefore, we may apply \Cref{thm:multiplicity-approx-bound} to obtain 
$$m_G\left(\lambda_2\right)\le \frac{n}{\lambda_2+\delta r/6 }\le \frac{n}{\lambda_2+\delta \log_{\Delta} \log n/42},$$
as desired.
\end{proof}

As with \Cref{thm:multiplicity-approx-bound}, the above corollary can also be generalized to obtain the same upper bound when counting eigenvalues in the interval $\left[\left(1-\Omega\left(\frac{\log \log \log n}{\log n}\right)\right)\lambda_2,\lambda_2\right]$. In this form, the corollary is essentially tight thanks to a construction from \cite{schildkraut-paper}, which exhibits a graph with maximum degree $6$ having at least $\Omega(n/\log \log n)$ eigenvalues in a slightly smaller interval.

We proceed with our result in the denser regime. We state and prove it below across two regimes to showcase the transition from the upper bound being approximately $\frac{n}{\log_{\Delta} n}$ to being approximately $\frac{n}{\lambda_2}$. The transition occurs around $ \lambda_2 \approx \log_{\Delta} n$. 
We note that the main part of the following theorem is the first regime, with the remaining being included to bridge the gap between it and \Cref{cor:multiplicity-bound-precise}. 

\begin{thm}\label{thm:multiplicity-bound-dense}
    Let $G$ be a connected $n$-vertex graph with $\lambda_2=\lambda_2(G)> 0$ and $\Delta\ge \Delta(G)\ge 2$. Then,
  $$m_G(\lambda_2)\le 
    \begin{cases}\displaystyle
        \frac{4n \log^2 \log_{\Delta} n}{\log_{\Delta} n}
 &\text{if } \lambda_2 \log \lambda_2 \ge \log_{\Delta} n > 2, \vspace{0.2cm}\\ \displaystyle
    \frac{4n \log (\lambda_2+1)}{\lambda_2}
 &\text{if }  \lambda_2 \log \lambda_2 \le \log_{\Delta} n 
    \end{cases}$$ 
\end{thm}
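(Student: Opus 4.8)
The plan is to mimic the trace-method skeleton used for Theorem~\ref{thm:multiplicity-approx-bound}, but in a regime where we do not have local expansion to exploit; instead we will use a cruder ``net'' argument. First I would handle the easy structural preliminaries: since $G$ is connected, Perron--Frobenius gives $\lambda_1(G)>\lambda_2$, so there is a minimal radius $S$ with $\lambda_1\bigl(B_G^{(S+1)}(u)\bigr)>\lambda_2$ for some $u$, while every ball of radius $S$ has largest eigenvalue at most $\lambda_2$. By Lemma~\ref{lem:small-eval} we get $m_G(\lambda_2)\le |B_G^{(S+1)}(u)|\,\Delta\le \Delta^{S+3}$, so we may assume $S$ is large, i.e.\ $S=\Omega(\log_\Delta n)$ — this is exactly the place where the hypothesis $\log_\Delta n>2$ (resp.\ the relation between $\lambda_2\log\lambda_2$ and $\log_\Delta n$) enters to make $S$ usable. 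Now for any vertex $v$, setting $F=B_G^{(R)}(v)$ for suitable $R\le S$, the trace-method bound
\[
m_G(\lambda_2)\cdot\lambda_2^{2R}\le \tr A_G^{2R}=\sum_{v\in V(G)}\mathbf{e}_v^{\intercal}A_{B_G^{(R)}(v)}^{2R}\mathbf{e}_v\le \sum_{v\in V(G)}\lambda_1\bigl(B_G^{(R)}(v)\bigr)^{2R}\le n\cdot\lambda_2^{2R}
\]
is useless on its own; the whole game is to gain a multiplicative saving on a constant fraction of the vertices $v$, i.e.\ to show $\lambda_1(B_G^{(R)}(v))\le \lambda_2(1-c)$ for most $v$, and then optimize $R$.

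The key step, which I expect to be the main obstacle, is the following dichotomy for a typical vertex $v$: either $B_G^{(R)}(v)$ already has $\lambda_1\le(1-c)\lambda_2$ (great), or it does not, in which case $B_G^{(R')}(v)$ for a slightly larger radius $R'$ already realizes $\lambda_1>\lambda_2$, hence is ``large'' by Lemma~\ref{lem:small-eval}-type reasoning, hence $v$ lies in only boundedly many such balls — allowing us to charge the ``bad'' $v$'s to a bounded number of large balls and conclude a constant fraction are good. Concretely, I would take a maximal packing of disjoint balls of radius $R$ each having $\lambda_1>(1-c)\lambda_2$; the centers number at most $n/B^\ast$ where $B^\ast$ is a lower bound on their size (obtained by noting such a ball contains a subgraph with spectral radius exceeding $(1-c)\lambda_2\ge \tfrac12\lambda_2$, hence, via the connection between spectral radius and the number of walks, must contain at least $\sim (\tfrac12\lambda_2)^{R}$ edges in a walk-dense configuration, or more cheaply, at least $\Omega(\lambda_2)$ vertices), and every vertex outside the $2R$-neighborhoods of these centers has $\lambda_1(B_G^{(R)}(\cdot))\le(1-c)\lambda_2$ by maximality. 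Removing the centers and invoking Lemma~\ref{lem:partial-net} exactly as in the proof of Theorem~\ref{thm:multiplicity-approx-bound} — with $L$ the low-spectral-radius balls and $C$ the covered vertices — upgrades the saving to all of $V(H)$, giving
\[
m_H(\lambda_2)\le n\Bigl(1-\tfrac{\eps^2}{\lambda_2^{4R}}\Bigr)^{R/(2R)}\le n\,e^{-\eps^2\lambda_2^{-4R}/2},
\]
and then Cauchy interlacing adds back at most $n/B^\ast$ for the removed centers.

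Finally I would split into the two stated regimes by choosing $R$ optimally. When $\lambda_2\log\lambda_2\le\log_\Delta n$ (the sparse-$\lambda_2$ case) one can afford $R$ of order $\log\lambda_2/\log\lambda_2=\Theta(1)$ — in fact $R=1$ or $2$ suffices — so that the balls $B_G^{(R)}(v)$ with $\lambda_1>\tfrac12\lambda_2$ have size $\Omega(\lambda_2/\log(\lambda_2+1))$, yielding the bound $\tfrac{4n\log(\lambda_2+1)}{\lambda_2}$; the constraint $\lambda_2\log\lambda_2\le\log_\Delta n$ is precisely what guarantees $S\ge R$ so the argument is not vacuous. In the complementary regime $\lambda_2\log\lambda_2\ge\log_\Delta n>2$, here $\lambda_2$ is large enough that a radius-$1$ ball never has spectral radius close to $\lambda_2$, so the relevant saving is governed by $S\approx\log_\Delta n$ itself: choosing $R=\Theta(\log_\Delta n/\log\log_\Delta n)$ makes the factor $(1-\eps^2\lambda_2^{-4R})^{R/(2R)}$ and the packing bound $n/B^\ast$ balance at roughly $\tfrac{n\log^2\log_\Delta n}{\log_\Delta n}$. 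The delicate point throughout, and the reason the two cases look different, is tracking how the exponent $R$ trades off against the loss $\lambda_2^{4R}$ in the denominator of $\eps^2\lambda_2^{-4R}$ coming from Lemma~\ref{lem:partial-net}: one must keep $R$ small enough that $\lambda_2^{4R}$ does not swamp the gain, yet large enough (comparable to $S$) that balls of radius $R$ are genuinely informative. I expect the bookkeeping to verify $2^9 R\lambda_2^{4R}\le \log_\Delta n$ in each regime to be the most error-prone part, but it is routine given the case hypotheses.
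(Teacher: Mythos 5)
Your plan transplants the packing-plus-\Cref{lem:partial-net} machinery from \Cref{thm:multiplicity-approx-bound} into the dense regime, and this is where it breaks quantitatively. The saving that \Cref{lem:partial-net} delivers is $\eps^2$ per block of $2R$ walk-steps with $\eps=\Theta(1/\lambda_2)$, so over walks of length $2S$ the multiplicative gain is at best $\exp\bigl(-\Theta(S/(R\,\lambda_2^{4R+2}))\bigr)$. For this to produce anything like the claimed bounds you need $R\,\lambda_2^{4R+2}\lesssim \log_\Delta n$, and your own closing check ``$2^9R\lambda_2^{4R}\le\log_\Delta n$'' is simply false in the stated regimes: in the first regime $\lambda_2$ can be as large as $\log_\Delta n$ (or larger), and even in the second regime one only has $\lambda_2\log\lambda_2\le\log_\Delta n$, so e.g.\ $\lambda_2=\sqrt{\log_\Delta n}$ is allowed while $\lambda_2^4=(\log_\Delta n)^2\gg\log_\Delta n$ already at $R=1$. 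This is not bookkeeping; it is the reason the superexponential argument cannot be pushed into this regime. There is also a structural gap in how you feed the packing into \Cref{lem:partial-net}: the lemma needs $\lambda_1(F[L])\le(1-4\eps)\lambda_1(F)$, and in \Cref{thm:multiplicity-approx-bound} this comes for free because uncovered vertices have degree at most $\lambda_2-1$. In your version $L$ is a union of vertices each of whose radius-$R$ balls has spectral radius at most $(1-c)\lambda_2$, but that local condition does not bound $\lambda_1(F[L])$ -- the trace method only gives $\lambda_1(F[L])\le|L|^{1/(2R)}(1-c)\lambda_2$, and the factor $|L|^{1/(2R)}$ is not negligible for the values of $R$ you can afford.

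The paper's proof of this theorem is genuinely different and avoids \Cref{lem:partial-net} entirely. After the same preliminary step (if some ball of radius $S\approx\log_\Delta n$ has $\lambda_1>\lambda_2$, \Cref{lem:small-eval} finishes; otherwise every such ball has $\lambda_1\le\lambda_2$), it splits closed walks of length $2S$ by support size relative to a parameter $\ell$: walks of support at most $\ell$ number at most $n\lambda_2^{2\ell}\ell^{2S}$ (each connected $\ell$-vertex subgraph through a vertex is encoded by a closed walk of length $2\ell$, hence there are at most $\lambda_2^{2\ell}$ of them, and each supports at most $\ell^{2S}$ walks), while walks of support at least $\ell$ are destroyed, except for a $1/\ell$ fraction, by deleting a uniformly random set of $\lceil n\log\ell/\ell\rceil$ vertices; Cauchy interlacing then adds these deletions back, and choosing $\ell\approx S/(2\log S)$ or $\ell=\lambda_2/3$ gives the two regimes. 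If you want to salvage your write-up, you should replace the packing/partial-net core with a saving mechanism of this kind (or some other argument whose loss does not scale like $\lambda_2^{4R}$); as it stands the central step of your proposal does not go through in either stated regime.
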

\begin{proof}
Note that in the first regime, we may assume $\log_{\Delta} n \ge  4\log^2\log_{\Delta} n$ or the desired inequality becomes trivial. Since $\log_\Delta n >2$ this implies $\log_{\Delta} n \ge 16.$ In the second regime, we may assume $\lambda_2 \ge 4 \log (\lambda_2+1)$, which gives $\lambda_2 \ge 16$ and $\log_{\Delta} n \ge \lambda_2 \ge 16$.

Let $S = \floor{\log_{\Delta} n}-2\ge 14$. Suppose first that for some $v \in V(G)$, we have $\lambda_1(B_G^{(S)}(v))> \lambda_2$.
Using \Cref{lem:small-eval} with $H=B_G^{(S)}(v)$ (note that $S \ge 1$ implies $H$ is non-empty), we obtain
$$m_G(\lambda_2) \le \left|B_G^{(S)}(v)\right|\Delta\le \Delta^{S+1}\le \frac{n}{\Delta} \le \frac{n}{\lambda_2},$$
where we used $\Delta \ge 2$ and $S \ge 3$ in the second inequality and the definition of $S$ in the third. This bound clearly suffices if we are in the second regime and by our upper bound assumption on $\log_{\Delta} n$ in the first regime. So, we may now assume that for any $v \in V(G)$, we have 
\begin{equation}\label{eq:ball-eval-upr-bnd}
\lambda_1(B_G^{(S)}(v))\le \lambda_2.
\end{equation}

Our next goal is to count the closed walks of length $2S$ in a subgraph of $G$ that we will obtain by deleting a few vertices. Let us first observe that the total number of such walks in $G$ (counted with a specified starting vertex) equals
\begin{equation}\label{eq:upr-bnd-walks-G}
    \sum_v\textbf{e}_v^{\intercal} A_G^{2S}\textbf{e}_v \le \sum_{v} \lambda_1\left(B_G^{(S)}(v)\right)^{2S}\le n \lambda_2^{2S},
\end{equation}
where in the final inequality, we used \eqref{eq:ball-eval-upr-bnd}.

We now separate walks in $G$ according to whether their support size is bigger or smaller than some parameter $\ell \le S$ to be chosen later, depending on the regime we are in.
    Note that given a vertex $v$ in $G$, the number of closed walks of length $2\ell$ starting at $v$ equals 
    $$\textbf{e}_v^{\intercal} A_G^{2\ell} \textbf{e}_v=\textbf{e}_v^{\intercal} A_{B^{(\ell)}_G(v)}^{2\ell} \textbf{e}_v \le \lambda_1\left(B^{(\ell)}_G(v)\right)^{2\ell}\le \lambda_2^{2\ell},$$
    where we used \eqref{eq:ball-eval-upr-bnd} once again.
    This implies that there can be at most $\lambda_2^{2\ell}$ connected induced subgraphs of $G$ on $\ell$ vertices containing $v$, since each such graph can be identified with a unique walk of length $2\ell$ (for example, to the DFS traversal of one of its spanning trees). Moreover, once we fix such a graph, the number of closed walks of length $2S$ starting at $v$ restricted to this subgraph is at most $\ell^{2S}$. So putting the bounds together, we conclude there can be at most 
    \begin{equation}\label{eq:short-walks-bound}
        n \cdot \lambda_2^{2\ell} \cdot \ell^{2S} 
    \end{equation}
    closed walks of length $2S$ in $G$ (counted with specified starting vertex) with support of size at most $\ell$.

    If we take a uniformly random subset consisting of $t=\ceil{n\cdot \frac{\log \ell}{\ell}}$ vertices of $G$, for any walk with support at least $\ell$, the probability that we do not sample any of its vertices is at most $$\binom{n-\ell}{t} \Big / \binom{n}{t} \le (1-\ell/n)^{t}\le 2^{-t\ell/n}\le \frac{1}{\ell}.$$

    This, combined with \eqref{eq:upr-bnd-walks-G}, implies that there is an outcome in which, by removing $t$ vertices from $G$, we obtain a graph $H$ which has at most $n  \lambda_2^{2S}/\ell$ closed walks of length $2S$ (counted with specified starting vertex) with support size at least $\ell$. Together with $\eqref{eq:short-walks-bound}$, it follows that the total number of closed walks of length $2S$ in $H$ is at most $n \lambda_2^{2\ell} \ell^{2S} + n \lambda_2^{2S}/\ell$ and thus,
    $$n \lambda_2^{2\ell} \ell^{2S} + n \lambda_2^{2S}/\ell \ge \sum_{v \in H} \textbf{e}_v^{\intercal} A_H^{2S}\textbf{e}_v = \tr \left(A_H^{2S}\right)\geq m_H(\lambda_2) \cdot \lambda_2^{2S}.$$
    Combining this with the Cauchy interlacing theorem gives an upper bound of 
    \begin{equation}\label{eq:multiplicity-all-regimes}
        m_G(\lambda_2) \le t+ m_H(\lambda_2)\le n \cdot \frac{\log \ell}{\ell} + 1 + n \cdot \lambda_2^{2\ell-2S}\ell^{2S}+\frac{n}{\ell}\le n \cdot \frac{2+\log \ell}{\ell} + n \cdot \left(\frac{\ell}{\lambda_2^{1-\ell/S}}\right)^{2S},
    \end{equation}
    where we used $\ell \le S \le \log_{\Delta} n \le n$.

    We now make separate choices for $\ell$ depending on the regime. 
    If  $\lambda_2 \log \lambda_2 \ge \log_{\Delta} n > S$, we set $\ell:=  \floor{\frac{S}{2\log S}}\ge \frac{S}{4\log S}.$ Note that since $\ell\le \frac{S}{2\log S}$ and $\lambda_2 \ge \frac{S}{\log S}$, we get 
    $$\left(\frac{\ell}{\lambda_2^{1-\ell/S}}\right)^{2S}\le \left(\frac{S/(2\log S)}{(S/\log S)^{1-\ell/S}}\right)^{2S}\le 2^{-S},$$
    where we used in the first inequality that the left-hand side is decreasing in $\lambda_2$. Plugging this into \eqref{eq:multiplicity-all-regimes}, we get   
    $$ m_G(\lambda_2) \le n \cdot \frac{2+\log S-\log (2\log S)}{ S/ (3\log S)} + n \cdot 2^{-S}\le n \cdot \frac{3\log^2 S}{ S}  \le n \cdot \frac{4\log^2 \log_{\Delta} n}{\log_{\Delta} n},
    $$
    where we used $S \ge \log_{\Delta} n -3 \ge 13.$
    
    Finally, if $\log_{\Delta} n \ge \lambda_2 \log \lambda_2$, we set $\ell:=\lambda_2/3$ to get from \eqref{eq:multiplicity-all-regimes} 
    $$ m_G(\lambda_2) \le 3n \cdot \frac{1/2+\log \lambda_2}{\lambda_2}+ n \cdot 2^{-2S}\le \frac{4n\log \lambda_2}{\lambda_2},$$
    where we use $\lambda_2 \ge 16$ in the first inequality to bound the second term via $\frac{\lambda_2 \log \lambda_2}{3S} \le \frac{\lambda_2 \log \lambda_2}{3(\lambda_2 \log \lambda_2-3)}\le \log(3/2)$ and in the second inequality, we use that $2S \ge 2\log_{\Delta} n -6 \ge 2\lambda_2 \log \lambda_2-6 \ge \log \lambda_2$.
    \end{proof}

We note that by being a bit more careful, one can remove one of the $\log \log_{\Delta}n$ terms in the first regime of the above result when $\log_{\Delta} n \le \lambda_2^{1-\eps}$ for any fixed $\eps>0$. We decided not to do so to keep the argument as simple and non-technical as feasible.

We now put together previous results to conclude \Cref{thm:multiplicity-main}.

\begin{thm}\label{thm:multiplicity-combined}
    Let $G$ be an $n$-vertex graph with second eigenvalue $\lambda_2$ and $\Delta \ge \Delta(G)\ge 2.$  Then, 
    $$m_G\left(\lambda_2\right)\le 5n \cdot \max\left\{\frac{\log (\lambda_2+1)}{\lambda_2},\frac{\log^2 (1+\log_{\Delta} n)}{\log_{\Delta} n}\right\}.$$
    Moreover, for any $0<\eps\le 1$ if $\log_{\Delta} n \ge \eps^{-1}(2\lambda_2+2)^{14} \log (\lambda_2+2)$, then 
     $$m_G\left(\lambda_2\right)\le \frac{n}{\lambda_2+1}+n^{\eps}.$$
\end{thm}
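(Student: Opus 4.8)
The plan is to obtain the statement by assembling the results already proved, after first reducing to the case that $G$ is connected. For the reduction, write $\mu=\lambda_2(G)$ and let $G_1$ be the connected component of $G$ with $\lambda_1$ largest. If $\lambda_1(G_i)\le\mu$ for every component, then $\mu$ is the Perron eigenvalue of each component attaining it, each such component has at least $\mu+1$ vertices, so $m_G(\mu)\le n/(\mu+1)$ and we are done (this lies below both claimed bounds). Otherwise $G_1$ is the unique component with $\lambda_1(G_1)>\mu$, whence $\lambda_1(G_1)=\lambda_1(G)$; the other components contribute at most one each to $m_G(\mu)$, and only if their $\lambda_1$ equals $\mu$, in which case they have $\ge\mu+1$ vertices; meanwhile $m_{G_1}(\mu)$ equals $m_{G_1}(\lambda_2(G_1))$ when $\mu=\lambda_2(G_1)$ and equals $0$ when $\mu>\lambda_2(G_1)$ (as then $\mu$ would lie strictly between the two top eigenvalues of $G_1$). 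Hence in all cases $m_G(\mu)\le m_{G_1}(\lambda_2(G_1))+\tfrac{n-|G_1|}{\mu+1}$, with $\lambda_2(G_1)=\mu$ in the only case where the first term is needed, so it suffices to prove both bounds for connected graphs and then recombine, using $\tfrac1{\mu+1}\le\tfrac{\log(\mu+1)}{\mu}$ (for $\mu\ge1$) and the monotonicity of $m\mapsto m\tfrac{\log^2\log_\Delta m}{\log_\Delta m}$ for $m\ge\Delta^2$.

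For the first bound, assume $G$ is connected. If $\lambda_2\le2$ then $\tfrac{\log(\lambda_2+1)}{\lambda_2}\ge\tfrac15$, so the bound follows from the trivial $m_G(\lambda_2)\le n$; likewise the boundary regime $\log_\Delta n\le2$ (where $n\le\Delta^2$) follows from $m_G(\lambda_2)\le n$ once one checks the maximum is at least $\tfrac15$, which it is for any sensibly chosen degree bound. Otherwise $\lambda_2>2$ and $\log_\Delta n>2$, and \Cref{thm:multiplicity-bound-dense} applies: it gives $m_G(\lambda_2)\le\tfrac{4n\log(\lambda_2+1)}{\lambda_2}$ when $\lambda_2\log\lambda_2\le\log_\Delta n$ and $m_G(\lambda_2)\le\tfrac{4n\log^2\log_\Delta n}{\log_\Delta n}$ when $\lambda_2\log\lambda_2\ge\log_\Delta n$, and in either case the right-hand side is at most $5n\max\{\ldots\}$ since $4\le5$ and $\log^2\log_\Delta n\le\log^2(1+\log_\Delta n)$. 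Passing back through the reduction gives the first bound for all $G$.

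For the moreover bound, the real point is that its hypothesis $\log_\Delta n\ge\eps^{-1}(2\lambda_2+2)^{14}\log(\lambda_2+2)$ forces the local-expansion condition of \Cref{thm:multiplicity-approx-bound} to hold automatically, with the tiny radius $r=3$ (note $4r+2=14$). Assume $G$ connected; if $\lambda_2<1$ the hypothesis is incompatible with \Cref{obs:disjoint-supports} (which forces $\operatorname{diam}(G)\le3$, hence $n<\Delta^4$ and $\log_\Delta n<4$), so the statement is vacuous, and we may take $\lambda_2\ge1$. Set $B=\lambda_2+2$: if some $v$ with $d_G(v)>\lambda_2-1$ had $|B_G^{(3)}(v)|<B$, then none of $B_G^{(1)}(v)\subseteq B_G^{(2)}(v)\subseteq B_G^{(3)}(v)$ could coincide (a coincidence would exhibit that set as a connected component of $G$, forcing $n<\lambda_2+2$, absurd since $\log_\Delta n$ is enormous), so $|B_G^{(3)}(v)|\ge|B_G^{(1)}(v)|+2=d_G(v)+3>\lambda_2+2=B$, a contradiction. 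Since moreover $2^9\!\cdot\!3\,\lambda_2^{14}\log B\le 2^{14}(\lambda_2+1)^{14}\log(\lambda_2+2)\le\eps^{-1}(2\lambda_2+2)^{14}\log(\lambda_2+2)\le\log_\Delta n$, \Cref{thm:multiplicity-approx-bound} applies with this $B$ and $r=3$ and yields $m_G(\lambda_2)\le\tfrac{n}{B-1}=\tfrac{n}{\lambda_2+1}$. For general $G$, combine with the reduction: if $n_1:=|G_1|\le n^\eps$ then $m_{G_1}(\lambda_2(G_1))\le n_1\le n^\eps$ and $m_G(\lambda_2)\le n^\eps+\tfrac n{\lambda_2+1}$; if $n_1>n^\eps$ then $\log_\Delta n_1>\eps\log_\Delta n\ge(2\lambda_2+2)^{14}\log(\lambda_2+2)$, which is enough to rerun the connected argument on $G_1$, giving $m_{G_1}(\lambda_2(G_1))\le\tfrac{n_1}{\lambda_2+1}$, so $m_G(\lambda_2)\le\tfrac n{\lambda_2+1}\le\tfrac n{\lambda_2+1}+n^\eps$.

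The heavy lifting is entirely in \Cref{thm:multiplicity-bound-dense} and \Cref{thm:multiplicity-approx-bound}, so the work here is organizational; the one genuinely delicate observation is the one above — that the strong lower bound on $\log_\Delta n$ in the moreover part is exactly what makes the expansion hypothesis of \Cref{thm:multiplicity-approx-bound} free, since with $r=3$ it suffices that $G$ has no component of fewer than $\lambda_2+2$ vertices — together with checking that the disconnected-to-connected reduction costs only the additive $n^\eps$ term, contributed by a single (necessarily small) leftover component. The remaining effort is routine bookkeeping in the boundary regimes where $\lambda_2$ or $\log_\Delta n$ is bounded and one simply invokes $m_G(\lambda_2)\le n$.
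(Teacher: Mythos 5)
Your proposal is correct and follows essentially the same route as the paper: reduce to the unique component carrying $\lambda_1(G)$ while charging every other component $\lambda_2+1$ vertices per unit of multiplicity (the paper's inequality \eqref{eq:GtoG'}), then invoke \Cref{thm:multiplicity-bound-dense} for the first bound and \Cref{thm:multiplicity-approx-bound} with $r=3$, $B=\lambda_2+2$ for the moreover part, with degenerate cases ($\lambda_2<1$, small $\log_\Delta n$, small dominant component) dispatched by the diameter/trivial bounds exactly as in the paper. Your only deviations are cosmetic — you make explicit the ball-size verification that the paper leaves implicit and organize the boundary bookkeeping via monotonicity of $m\mapsto m\log^2(1+\log_\Delta m)/\log_\Delta m$ rather than the paper's $n'/\log n'\le n/\log n$ splitting — and your hand-wave in the regime $\Delta\gg n$ mirrors an analogous glossing in the paper's own reduction to $\log_\Delta n\ge 10$.
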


\begin{proof}
First, we claim that we may assume that $\log_{\Delta} n \ge 10$.
Indeed, for the first part of the theorem, this follows since the trivial upper bound of $n$ is smaller than our second term unless $\log_{\Delta} n  > 5 \log^2 (1+\log_{\Delta} n)$ and for the moreover part, this follows from the lower bound assumption on $\log_{\Delta} n$.
This implies that $\lambda_2 \ge 0$ (since it implies that our graph can't be complete). This in turn ensures the first term in the maximum is always larger than $\frac{1}{\lambda_2+1}$ and shows that if we establish an upper bound of $\frac{n}{\lambda_2+1}$ we are done in both the main and moreover parts of the theorem. 
  Furthermore, we may now assume that $\lambda_2 \ge 1$. Indeed, if $\lambda_2 < 1$, then by the Cauchy interlacing theorem, $G$ cannot contain an induced matching of size $2$, which implies that the diameter of $G$ is at most $3$ and, in particular, $m_{G}(\lambda_2) \le n\le \Delta^3+1 \le \frac{n}{\Delta+1} < \frac{n}{\lambda_2+1}$.

Suppose first that $\lambda_1(G)=\lambda_2$. Then, $m_G(\lambda_2)$ equals the number of connected components of $G$ with largest eigenvalue equal to $\lambda_2$. Since any such graph must have at least $\lambda_2+1$ vertices, we get $m_G(\lambda_2) \le \frac{n}{\lambda_2+1},$ as desired.
 
Let us now assume $\lambda_1(G) > \lambda_2$. Let $G'$ denote the connected component of $G$ with the largest eigenvalue equal to $\lambda_1(G)$. Note that the remaining components must have largest eigenvalues at most $\lambda_2$, so if $\lambda_2(G')<\lambda_2$ we again get $\frac{n}{\lambda_2+1}$ as an upper bound. So we may assume $\lambda_2(G')=\lambda_2$ and we get
\begin{equation}\label{eq:GtoG'}
    m_G(\lambda_2)\le m_{G'}(\lambda_2) + \frac{n-n'}{\lambda_2+1},
\end{equation}
where $n'\le n$ denotes $|V(G')|$. Note that $\Delta(G)\ge 2$ implies $1<\lambda_1(G)=\lambda_1(G') \le \Delta(G')$, so $\Delta(G') \ge 2$.


To verify the first part of the theorem, note that since $\log_{\Delta} n \ge 10$, we may assume $\log_{\Delta} n' >8$, as otherwise $m_G(\lambda_2) \le n'+\frac{n}{\lambda_2+1}\le \Delta^8+\frac{n}{\lambda_2+1}\le \frac{2n}{\lambda_2+1}.$
We can now apply \Cref{thm:multiplicity-bound-dense} to $G'$ (with $\Delta\ge \Delta(G) \ge \Delta(G')\ge 2$). The second regime gives us an upper bound of $\frac{4n' \log (\lambda_2+1)}{\lambda_2}\le \frac{4n \log (\lambda_2+1)}{\lambda_2}$, as desired. The first regime gives an upper bound of
$$ \frac{4n'\log^2 \log_{\Delta} n'}{\log_{\Delta} n'}\le \frac{4n'\log^2 \log_{\Delta} n}{\log_{\Delta} n'}\le \frac{4n\log^2 \log_{\Delta} n}{\log_{\Delta} n},$$
where we used $\frac{n'}{\log n'} \le \frac{n}{\log n}$ in the final inequality. Combining with \eqref{eq:GtoG'} in both cases gives the desired bound for $G$.

For the moreover part, since by \eqref{eq:GtoG'} $m_G(\lambda_2)\le n' + \frac{n}{\lambda_2+1}$ we are done unless $n' \ge n^{\eps}$. This gives $\log_{\Delta} n' >\eps \log_{\Delta} n \ge 2^9 \cdot 3 \cdot \lambda_2^{14} \log (\lambda_2+2)$ and we may apply \Cref{thm:multiplicity-approx-bound} with $r=3$ and $B=\lambda_2+2$ to $G'$ to get $m_{G'}(\lambda_2) \le \frac{n'}{\lambda_2+1}$. Combining with \eqref{eq:GtoG'} gives the desired bound for $G$.
\end{proof}

We note that by choosing $\eps$ optimally, the error term $n^{\eps}$ is of the form $\Delta^{\lambda_2^{O(1)}}$.

\section{Improved bounds for equiangular lines}\label{sec:equiangular}
In this section, we prove \Cref{thm:main}. We begin with some setup and two lemmas that we will need.

For any collection of $n$ $\alpha$-equiangular lines in $\R^r$, choosing a unit vector along each line yields a collection of $n$ unit vectors $\Ce$ having the property that $\inprod{\textbf{u}}{\textbf{v}} \in \{\pm \alpha\}$ for all distinct $\textbf{u}, \textbf{v} \in \Ce$; we refer to such a collection $\Ce$ as a \emph{spherical $\{\pm \alpha\}$-code}. We define its corresponding Gram matrix $M \colon \Ce \times \Ce \to \R$ by $M(\textbf{u}, \textbf{v}) = \inprod{\textbf{u}}{\textbf{v}}$ and its \emph{corresponding graph} $G$ as having vertex set $V(G) = \Ce$ and edges made by pairs with negative inner products, i.e.\ $E(G) = \{\textbf{u}\textbf{v} : \inprod{\textbf{u}}{\textbf{v}} = -\alpha \}$. Now let $A = A_G$ be the adjacency matrix of the corresponding graph $G$. Then, $M$ and $A$ satisfy
\begin{equation} \label{eq_gram_adjacency}
    M = (1-\alpha) \: I_n + \alpha J_n - 2 \alpha A,
\end{equation}
where $I_n$ is the $n \times n$ identity matrix and $J_n$ is the $n \times n$ matrix with all entries equal to one. 
A key, standard property of any Gram matrix that we will use is that it is positive semidefinite, i.e.\ $\textbf{x}^{\intercal} M \textbf{x}\ge 0$ for any $\textbf{x}\in \R^{\Ce}$. 

For any graph $G$ with adjacency matrix $A$, we define the parameter $$\beta(G) := \max_{\substack{\textbf{x} \in \R^{V(G)} \backslash \{\textbf{0}\} \\ \textbf{x} \perp \one}} \frac{\textbf{x}^{\intercal} A\textbf{x}}{\textbf{x}^{\intercal} \textbf{x}}.$$
$\beta(G)$ plays a similar role in general graphs as the second eigenvalue does in the regular case. Indeed, by the variational definition of eigenvalues, $\beta(G) \geq \lambda_2(G)$ with equality when $G$ is regular. The following lemma, which appears in an almost identical form in \cite{igor-paper}, gives us an upper bound on this parameter for the corresponding graph of a spherical $\{\pm \alpha\}$-code, in terms of $\alpha$. 

\begin{lem} \label{lem_beta_upper}
Let $0 < \alpha < 1$ and let $\Ce$ be a spherical $\{\pm \alpha\}$-code in $\R^r$ with corresponding graph $G$. Then $$\beta(G) \leq \frac{1-\alpha}{2\alpha}$$ and if $|\Ce| \geq r + 2$, then we have equality above and $\beta(G)$ is an eigenvalue of $G$.
\end{lem}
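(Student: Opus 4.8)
The plan is to exploit the identity \eqref{eq_gram_adjacency} together with the positive semidefiniteness of the Gram matrix $M$. For the upper bound, take any $\textbf{x} \in \R^{\Ce} \setminus \{\textbf{0}\}$ with $\textbf{x} \perp \one$, and compute $\textbf{x}^{\intercal} M \textbf{x}$. Since $J_n \textbf{x} = \textbf{0}$ by orthogonality to the all-ones vector, \eqref{eq_gram_adjacency} gives $\textbf{x}^{\intercal} M \textbf{x} = (1-\alpha)\,\textbf{x}^{\intercal}\textbf{x} - 2\alpha\,\textbf{x}^{\intercal} A \textbf{x}$. Positive semidefiniteness of $M$ forces this to be $\geq 0$, which rearranges (using $\alpha > 0$) to $\textbf{x}^{\intercal} A \textbf{x} \leq \frac{1-\alpha}{2\alpha}\,\textbf{x}^{\intercal}\textbf{x}$. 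Taking the supremum over all such $\textbf{x}$ yields $\beta(G) \leq \frac{1-\alpha}{2\alpha}$.

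For the second part, assume $|\Ce| = n \geq r+2$. The code $\Ce$ lives in $\R^r$, so the vectors span a subspace of dimension at most $r$; consequently the Gram matrix $M$ has rank at most $r$, hence nullity at least $n - r \geq 2$. The kernel of $M$ is therefore a subspace of dimension at least $2$, so it must contain a nonzero vector $\textbf{x}$ orthogonal to $\one$ (the condition $\textbf{x} \perp \one$ is a single linear constraint, and a subspace of dimension $\geq 2$ cannot be contained in a hyperplane). For this $\textbf{x}$ we have $M\textbf{x} = \textbf{0}$, so the inequality $\textbf{x}^{\intercal} M \textbf{x} \geq 0$ above holds with equality, giving $\textbf{x}^{\intercal} A \textbf{x} = \frac{1-\alpha}{2\alpha}\,\textbf{x}^{\intercal}\textbf{x}$; combined with the upper bound from the first part, this shows $\beta(G) = \frac{1-\alpha}{2\alpha}$. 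Moreover, from \eqref{eq_gram_adjacency} and $J_n\textbf{x} = \textbf{0}$ we get $\textbf{0} = M\textbf{x} = (1-\alpha)\textbf{x} - 2\alpha A\textbf{x}$, i.e. $A\textbf{x} = \frac{1-\alpha}{2\alpha}\textbf{x}$, so $\frac{1-\alpha}{2\alpha}$ is genuinely an eigenvalue of $G$ (with eigenvector $\textbf{x}$), as claimed.

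There is no serious obstacle here: the argument is essentially a two-line manipulation of \eqref{eq_gram_adjacency} plus a rank-nullity count. The only point requiring a small amount of care is the step extracting a kernel vector orthogonal to $\one$ from a kernel of dimension $\geq 2$, which is immediate from elementary linear algebra. One should also double-check the direction of the inequality when dividing by $2\alpha$ (positive since $0 < \alpha < 1$), and note that the claimed eigenvector statement needs $J_n\textbf{x} = \textbf{0}$, which is exactly why we selected $\textbf{x}$ in the kernel to also be orthogonal to $\one$.
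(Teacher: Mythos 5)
Your proposal is correct and follows essentially the same argument as the paper: the upper bound via positive semidefiniteness of $M$ applied to vectors orthogonal to $\one$, and the equality case by extracting from the kernel of $M$ (of dimension at least $2$ by rank--nullity) a nonzero vector orthogonal to $\one$, which is then an eigenvector of $A$ with eigenvalue $\frac{1-\alpha}{2\alpha}$. No differences worth noting.
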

\begin{proof}
Let $M$ be the Gram matrix of $\Ce$, let $A = A_G$ be the adjacency matrix of $G$, and let $\textbf{x} \in \R^\Ce \backslash \{0\}$ be such that $\textbf{x} \perp \one$. Then using \eqref{eq_gram_adjacency} and the fact that $M$ is positive semidefinite, we have
$$
0 \leq \textbf{x}^{\intercal} M \textbf{x} 
= (1-\alpha) \textbf{x}^{\intercal} I_n \textbf{x} - 2 \alpha \textbf{x}^{\intercal} A \textbf{x}= (1-\alpha) \textbf{x}^{\intercal} \textbf{x} - 2 \alpha \textbf{x}^{\intercal} A \textbf{x}\implies \frac {\textbf{x}^{\intercal} A \textbf{x}}{\textbf{x}^{\intercal} \textbf{x}} \le \frac{1-\alpha}{2\alpha},
$$
from which the desired upper bound on $\beta(G)$ follows. 

Moreover, if $|\Ce| \geq r+2,$ then since $\rk(M) \leq r$, the nullspace of $M$ has dimension at least 2 and must therefore contain a nonzero vector $\textbf{y} \perp \one$. Again using \eqref{eq_gram_adjacency}, we have that $A \textbf{y} = \frac{1-\alpha}{2\alpha} \textbf{y}$ and $\beta(G) \geq \frac{\textbf{y}^\intercal A \textbf{y}}{\textbf{y}^\intercal \textbf{y}} = \frac{1-\alpha}{2\alpha}$, which implies that $\beta(G)$ equals $\frac{1-\alpha}{2\alpha}$ and is an eigenvalue of $G$.
\end{proof}

Note that when choosing the unit vectors making the spherical code corresponding to a given collection of equiangular lines, we have two choices for each of their directions. The operation of changing the direction for one such vector from $\textbf{v}$ to $-\textbf{v}$ is called a \emph{switching} and it corresponds to 
swapping the set of neighbors and non-neighbors of $\textbf{v}$ in the corresponding graph. 
It was shown in \cite{igor-paper} that there is always a switching for which the corresponding graph has small maximum degree. We prove a weaker version of this lemma with a different but simpler proof, since it will suffice for our applications. We will make use of the Frobenius inner product of two matrices with the same dimensions defined by $\inprod{A}{B}_F=\tr(A^{\intercal}B)$. The corresponding norm is then defined as $\|A\|^2=\inprod{A}{B}_F$. We will repeatedly use the fact that if $\textbf{x},\textbf{z}, \textbf{y},\textbf{w} \in \R^r$, then $\inprod{\textbf{x}\textbf{y}^{\intercal}}{\textbf{z}\textbf{w}^{\intercal}}_F=\tr(\textbf{y}\textbf{x}^{\intercal}\textbf{z}\textbf{w}^{\intercal})=\tr(\textbf{w}^{\intercal}\textbf{y}\textbf{x}^{\intercal}\textbf{z})=\textbf{w}^{\intercal}\textbf{y}\textbf{x}^{\intercal}\textbf{z}=\inprod{\textbf{x}}{\textbf{z}}\inprod{\textbf{y}}{\textbf{w}}.$

\begin{lem}\label{lem:max-deg-upr-bnd}
    Let $0<\alpha<1$. For any family of $\alpha$-equiangular lines, there exists a choice of a unit vector along each line such that the resulting spherical $\{\pm\alpha\}$-code has a corresponding graph with maximum degree at most $6 / \alpha^4$.
\end{lem}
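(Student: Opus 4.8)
The plan is to use a probabilistic (random switching) argument together with the Frobenius-norm machinery set up just before the lemma. Let $\Ce$ be any spherical $\{\pm\alpha\}$-code obtained by choosing a unit vector $\textbf{v}$ along each of our $n$ lines, and let $M$ be its Gram matrix. The key observation is that switching the sign of $\textbf{v}$ replaces the row/column of $M$ indexed by $\textbf{v}$ by its negation; equivalently, if $\eta \in \{\pm 1\}^{\Ce}$ records our sign choices (relative to some fixed reference code), then the Gram matrix is $D_\eta M D_\eta$ where $D_\eta$ is the diagonal sign matrix. Writing $P := M - (1-\alpha)I_n$, the matrix $P$ has entries $\pm\alpha$ off the diagonal and $0$ on the diagonal, and $\textbf{v}$ has degree $d_\eta(\textbf{v})$ in the corresponding graph exactly when the row of $D_\eta P D_\eta$ at $\textbf{v}$ has $d_\eta(\textbf{v})$ entries equal to $-\alpha$. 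So controlling the maximum degree amounts to controlling, for a good choice of $\eta$, the number of $-\alpha$ entries in each row.

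First I would pick the signs $\eta_{\textbf v}$ independently and uniformly at random. For a fixed vertex $\textbf{v}$ and a fixed neighbour candidate $\textbf{w}$, the inner product $\langle \eta_{\textbf v}\textbf{v}, \eta_{\textbf w}\textbf{w}\rangle$ is $\pm\alpha$ each with probability $1/2$ and these are \emph{not} independent across $\textbf w$ for fixed $\textbf v$ — but the sign of $\langle \eta_{\textbf v}\textbf{v},\eta_{\textbf w}\textbf{w}\rangle$ equals $\eta_{\textbf v}\eta_{\textbf w}\cdot\mathrm{sgn}\langle\textbf v,\textbf w\rangle$, so after conditioning on $\eta_{\textbf v}$ the events ``$\textbf w$ is a neighbour of $\textbf v$'' over the various $\textbf w\neq\textbf v$ are independent, each with probability $1/2$. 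Hence $d_\eta(\textbf v)$ is a sum of independent Bernoulli$(1/2)$ random variables, and a Chernoff bound gives $\pr[d_\eta(\textbf v) \ge n/2 + t\sqrt n] \le e^{-t^2/2}$ or similar. That alone only caps degrees near $n/2$, which is useless — the point of the lemma is a bound depending only on $\alpha$, not on $n$. So the real content must come not from the random switching of a \emph{single} code, but from combining switching with the rank bound $\mathrm{rk}(M)\le r$, exactly as in the role $\beta(G)$ and positive semidefiniteness play in \Cref{lem_beta_upper}.

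The sharper approach, and the one I expect the paper takes, is an averaging argument over switchings using the Frobenius norm. Consider $\|P\|_F^2 = \alpha^2\cdot 2|E(G)| \le \alpha^2 n^2$ trivially; more usefully, decompose $M = \sum_{\textbf v} (\eta_{\textbf v}\textbf v)(\eta_{\textbf v}\textbf v)^\intercal$ has rank at most $r$, so one can bound a suitable quadratic quantity in terms of $r$ rather than $n$. Concretely, for a vertex $\textbf v$ of degree $d=d_\eta(\textbf v)$, look at $\textbf x := \sum_{\textbf w \sim \textbf v}\textbf w$ (sum of the switched unit vectors at the neighbours of $\textbf v$): then $\|\textbf x\|^2 = d + \sum_{\textbf w\ne\textbf w'\sim\textbf v}\langle\textbf w,\textbf w'\rangle$, and the projection identities using $\langle\textbf w,\textbf v\rangle = -\alpha$ for neighbours force $\|\textbf x\|^2 \ge \alpha^2 d^2 / \|\textbf v\|^2 = \alpha^2 d^2$ by Cauchy–Schwarz applied to $\langle \textbf x,\textbf v\rangle = -\alpha d$. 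On the other hand, after an \emph{optimal} switching — one minimising, say, $\sum_{\textbf w\ne\textbf w'}|\langle\textbf w,\textbf w'\rangle|$ or making a single local switch at $\textbf v$'s neighbourhood not decrease something — one shows $\|\textbf x\|^2 \le O(1)\cdot d/\alpha^2$, because otherwise flipping one of $\textbf v$'s neighbours would reduce the relevant potential. Putting $\alpha^2 d^2 \le \|\textbf x\|^2 \le O(d/\alpha^2)$ yields $d \le O(1/\alpha^4)$, and chasing the constant gives $6/\alpha^4$.

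The main obstacle will be making precise the ``optimal switching'' step: one needs a single scalar potential $\Phi(\eta)$ (e.g.\ $\Phi(\eta) = \sum_{\textbf w}\|\sum_{\textbf w'\sim \textbf w}\textbf w'\|^2$, or a Frobenius-norm expression like $\|D_\eta M D_\eta - cJ\|_F^2$ for a well-chosen $c$) whose minimiser automatically bounds every vertex degree, and then verify via a one-variable sign-flip computation that at the minimiser no vertex can have degree exceeding $6/\alpha^4$. The delicate points are (i) choosing $\Phi$ so that the first-order optimality condition at each vertex $\textbf v$ translates into the inequality $\|\sum_{\textbf w\sim\textbf v}\textbf w\|^2 = O(d_\eta(\textbf v)/\alpha^2)$, and (ii) keeping track of cross terms involving $\textbf v$ itself and the diagonal so the constant stays at $6$. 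Everything else — the Cauchy–Schwarz lower bound $\|\sum_{\textbf w\sim\textbf v}\textbf w\|^2 \ge \alpha^2 d_\eta(\textbf v)^2$ and the bilinearity identity $\langle\textbf x\textbf y^\intercal,\textbf z\textbf w^\intercal\rangle_F = \langle\textbf x,\textbf z\rangle\langle\textbf y,\textbf w\rangle$ already recorded in the text — is routine bookkeeping.
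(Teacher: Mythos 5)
Your plan has a genuine gap at its central step. The correct and useful part is the observation that for a vertex $\textbf{v}$ of degree $d$ one has $\bigl\|\sum_{\textbf{w}\sim\textbf{v}}\textbf{w}\bigr\|^2\ge\alpha^2d^2$ by Cauchy--Schwarz, so that an upper bound of order $d/\alpha^2$ on this quantity would give $d=O(1/\alpha^4)$. But the matching upper bound is exactly the content of the lemma, and you never produce it: you defer to an unspecified potential $\Phi(\eta)$ whose minimiser is supposed to satisfy a first-order condition of the form $\|\sum_{\textbf{w}\sim\textbf{v}}\textbf{w}\|^2=O(d/\alpha^2)$ at every vertex. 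The candidates you float do not do this. Minimising or maximising $\|D_\eta M D_\eta-cJ\|_F^2$ is, up to constants depending only on $M$, the same as extremising $\langle D_\eta M D_\eta,J\rangle_F=\|\sum_{\textbf{v}}\eta_{\textbf{v}}\textbf{v}\|^2$, and the single-flip optimality condition for that potential only says that each vertex has degree at most (or at least) about $(n-1)/2$ --- a bound depending on $n$, not on $\alpha$, i.e.\ exactly the uselessness you yourself identified for random switching. The potential $\sum_{\textbf{w}}\|\sum_{\textbf{w}'\sim\textbf{w}}\textbf{w}'\|^2$ is not analysed at all, and there is a structural obstacle to the heuristic ``otherwise flip a neighbour of $\textbf{v}$'': flipping a neighbour $\textbf{w}$ changes the sign of \emph{all} inner products involving $\textbf{w}$, not just those inside $N(\textbf{v})$, so a surplus localised in $\textbf{v}$'s neighbourhood does not translate into a decrease of any global potential without a genuinely new idea. (Also, your appeal to the rank bound $\rk(M)\le r$ is a red herring: no dimension count is needed for this lemma.)

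For comparison, the paper does not optimise over all $2^n$ switchings at all. It makes one explicit normalisation --- fix a reference unit vector $\textbf{w}$ and choose every other vector $\textbf{v}$ so that $\inprod{\textbf{v}}{\textbf{w}}=+\alpha$ --- and then, for each vertex $\textbf{u}$, compares the averaged rank-one matrices $A_X=\frac1{|X|}\sum_{\textbf{v}\in X}\textbf{v}\textbf{v}^\intercal$ for $X=N(\textbf{u})$ and $Y=\Ce\setminus(N(\textbf{u})\cup\{\textbf{u}\})$ against $\textbf{u}\textbf{w}^\intercal$ in the Frobenius inner product. Since $z_X=-\alpha$ and $z_Y=+\alpha$, Cauchy--Schwarz gives $2\alpha^2\le\sqrt{1/|X|+1/|Y|}$, forcing every degree to be either at most $1/\alpha^4$ or at least $|\Ce|-1-1/\alpha^4$; expanding $\|\sum_{\textbf{u}\in T}\textbf{u}\|^2\ge0$ shows the set $T$ of high-degree vertices has size at most $4/\alpha^4$, and switching precisely the vectors in $T$ yields maximum degree at most $6/\alpha^4$. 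So the polarisation-plus-finite-correction structure, and the use of the fixed reference vector, are the missing ingredients; without them (or a concrete, verified potential), your argument does not close.
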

\begin{proof}
For any family of $\alpha$-equiangular lines, we may fix one unit vector $\textbf{w}$ along a fixed line, and for every other line, choose a unit vector $\textbf{v}$ lieing in it such that $\inprod{\textbf{v}}{\textbf{w}} = \alpha$. Let $\Ce$ be the spherical $\{\pm\alpha\}$-code consisting of all chosen vectors except for $\textbf{w}$. It will suffice for us to show that every vertex in the graph corresponding to $\Ce$ has degree either at most $1 / \alpha^4$ or at least $|\Ce| - 1 - 1/\alpha^4$ and that the number of vertices with degree at least $|\Ce| -1- 1/\alpha^4$ is at most $4/\alpha^4$. Indeed, we may then apply a switching to all vectors which were in the latter group (negating the vectors) in order to arrive at a spherical $\{\pm\alpha\}$-code which also corresponds to the given family of lines and for which the corresponding graph has maximum degree at most $5/\alpha^4+1$.

Fix $\textbf{u} \in \Ce$ and for any nonempty $X \subseteq \Ce$, define $A_X = \frac{1}{|X|}\sum_{\textbf{v}\in X}{\textbf{v}\textbf{v}^\intercal}$ and $z_X = \frac{1}{|X|}\sum_{\textbf{v}\in X}{\inprod{\textbf{v}}{\textbf{u}}}$. Observe that $||A_X||_F^2 = \frac{1}{|X|^2}\left(|X| + \alpha^2|X|(|X|-1)\right) < \alpha^2 + \frac{1}{|X|}$ and $\inprod{\textbf{u} \textbf{w}^\intercal}{A_X}_F = \frac{1}{|X|}\sum_{\textbf{v}\in X}{\inprod{\textbf{w} \textbf{u}^\intercal}{\textbf{v}\textbf{v}^\intercal}_F} = \alpha z_X$. Furthermore, if we let $Y \subseteq \Ce$ be nonempty and disjoint from $X$, then $\inprod{A_X}{A_Y}_F = \alpha^2$, so via the Cauchy-Schwarz inequality, we obtain
    $$
        \alpha |z_X - z_Y| = |\inprod{\textbf{u} \textbf{w}^\intercal}{A_X - A_Y}_F| \leq ||\textbf{u}\textbf{w}^\intercal||_F ||A_X - A_Y||_F
        = \sqrt{||A_X||_F^2 + ||A_Y||_F^2 - 2 \inprod{A_X}{A_Y}_F}
        \leq \sqrt{\frac{1}{|X|} + \frac{1}{|Y|}}.
    $$
Suppose towards a contradiction that $1/\alpha^4<d(\textbf{u})< |\Ce|-1-1/\alpha^4$. The above inequality with $X = N(\textbf{u})$ and $Y = \Ce \backslash (N(\textbf{u})\cup \{\textbf{u}\})$ implies that $4\alpha^4 \leq \frac{1}{d(\textbf{u})} + \frac{1}{|\Ce|-1 - d(\textbf{u})}<2\alpha^4$, a contradiction.
It now remains to show that $T := \left\{\textbf{u} \in \Ce : |\Ce| -1- d(\textbf{u}) \leq 1/\alpha^4 \right\}$ satisfies $|T| \leq 4/\alpha^4$. To this end, we note that $d_{G[T]}(\textbf{u}) \geq d(\textbf{u}) - |\Ce| + |T| \geq |T| -1- 1/\alpha^4$ and thus,
    \begin{align*}
        0 \leq \Big\|  \sum_{\textbf{u} \in T}{\textbf{u}}  \Big\|^2 
        = |T| + \alpha |T|(|T|-1) - 2 \alpha \sum_{\textbf{u} \in T}{d_{G[T]}(\textbf{u})}
        &\leq |T| + \alpha |T|(|T|-1) - 2 \alpha |T| \left(|T| - 1 - \frac{1}{\alpha^4} \right)\\
        &\leq |T| \left(\frac{4}{\alpha^3} - \alpha |T| \right),
    \end{align*}
giving the desired bound on $|T|$. \end{proof}

Let $\MC(n,\lambda_2,\Delta)$ denote the maximum multiplicity of the second largest eigenvalue $\lambda_2$ in a connected graph with at most $n$ vertices with maximum degree at most $\Delta$. The following lemma captures the relation between the equiangular lines problem and the eigenvalue multiplicity questions discussed in the previous section.
\begin{lem}\label{lem:lines-to-multiplicity} 
Let $0< \alpha <1 $ and $r\ge 2$ be an integer. If there exist $n$  $\alpha$-equiangular lines in $\mathbb{R}^r$, then
$$n \le r-1+\max\left\{(r-1)\cdot\frac{2\alpha}{1-\alpha},\: \MC\left(n,\frac{1-\alpha}{2\alpha},\frac{6}{\alpha^4}\right)+2\right\}.$$
Moreover, if there does not exist a graph with the largest eigenvalue equal to $\frac{1-\alpha}{2\alpha}$, then $n \leq r + 1 + \MC\left(n,\frac{1-\alpha}{2\alpha},\frac{6}{\alpha^4}\right).$
\end{lem}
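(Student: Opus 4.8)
The plan is to combine the three ingredients established so far: the upper bound $\beta(G) \le \frac{1-\alpha}{2\alpha}$ with the equality case (Lemma \ref{lem_beta_upper}), the existence of a switching making the corresponding graph have maximum degree at most $6/\alpha^4$ (Lemma \ref{lem:max-deg-upr-bnd}), and the definition of $\MC$. Fix a family of $n$ $\alpha$-equiangular lines in $\R^r$; we may assume $n \ge r+2$ since otherwise the bound is trivial. By Lemma \ref{lem:max-deg-upr-bnd}, choose a unit vector along each line so that the corresponding spherical $\{\pm\alpha\}$-code $\Ce$ has corresponding graph $G$ with $\Delta(G) \le 6/\alpha^4$. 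By Lemma \ref{lem_beta_upper}, since $|\Ce| = n \ge r+2$, the value $\lambda := \frac{1-\alpha}{2\alpha}$ equals $\beta(G)$ and is an eigenvalue of $G$.

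Next I would pass to a connected component. Let $G_1, \dots, G_t$ be the connected components of $G$ with vertex counts $n_1, \dots, n_t$. Since $\beta(G) = \lambda$ is attained by some vector $\textbf{x} \perp \one$, and since for a disconnected graph $\beta(G) = \max_i \lambda_1(G_i)$ when we are allowed to put mass on a single component orthogonalized against $\one$ — more carefully, $\beta(G) \ge \lambda_1(G_i)$ for every $i$ (take $\textbf{x}$ supported on $G_i$ as the Perron vector, corrected by a small multiple of an indicator of another component, or note $\beta(G)\ge \lambda_2(G)\ge \min_i \lambda_1(G_i)$ and handle the rest). The cleaner route: observe $\lambda$ is an eigenvalue of $G$, hence of some component $G_i$, and $\lambda_1(G_i) \le \beta(G) = \lambda$ always (by restricting the Rayleigh quotient and orthogonalizing against $\one$ using a second component, which exists if $t\ge 2$; if $t=1$ then $G$ is connected and $\lambda_1(G) \le \lambda$ need not hold, but then $\lambda = \lambda_2(G)$). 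So in the connected case $\lambda = \lambda_2(G)$ and $m_G(\lambda) \le \MC(n, \lambda, 6/\alpha^4)$, while the eigenvalues of $G$ other than $\lambda$ (with multiplicity) number $n - m_G(\lambda)$, and since $\rk(M) \le r$ and $M$'s kernel is exactly the $\lambda$-eigenspace of $G$ restricted to $\one^\perp$ plus possibly $\one$ itself, we get $n - m_G(\lambda) \le r+1$, i.e. $n \le r+1 + \MC(n,\lambda,6/\alpha^4)$. To match the claimed bound we must account for the possibility of two or more components each contributing eigenvalue $\lambda$: if $t \ge 2$, then each $G_i$ with $\lambda_1(G_i) = \lambda$ contributes at least $\lambda + 1$ vertices (a graph with spectral radius $\lambda$ has at least $\lambda+1$ vertices), so the number of such components is at most $n/(\lambda+1)$, which after the $\rk(M) \le r$ bookkeeping yields the $(r-1)\cdot \frac{2\alpha}{1-\alpha}$ term since $\frac{1}{\lambda+1} = \frac{2\alpha}{1+\alpha} \le \frac{2\alpha}{1-\alpha}$; combining, the multiplicity of $\lambda$ in $G$ is at most $\max\{$(components-count term)$, \MC(n,\lambda,6/\alpha^4)+(\text{small slack})\}$ and $n \le r - 1 + m_G(\lambda)$ since in the disconnected case $\rk(M)\le r$ forces one extra degree of freedom beyond $\one$. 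I'd track the $+2$ and $-1$ constants carefully here.

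The \emph{moreover} part is cleaner: if no finite graph has largest eigenvalue exactly $\lambda = \frac{1-\alpha}{2\alpha}$, then no component $G_i$ can have $\lambda_1(G_i) = \lambda$, so $\lambda$ cannot be $\lambda_1$ of any component; since $\lambda$ is an eigenvalue of $G$, it is $\lambda_2$ of the unique component $G_1$ with $\lambda_1(G_1) > \lambda$ (all other components have $\lambda_1 < \lambda$, hence cannot have $\lambda$ as an eigenvalue at all by interlacing within a component), $G$ is effectively connected for our purposes, $m_G(\lambda) = m_{G_1}(\lambda) \le \MC(n,\lambda,6/\alpha^4)$, and the rank bound $n - m_G(\lambda) \le r+1$ gives $n \le r+1+\MC(n,\lambda,6/\alpha^4)$.

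The main obstacle I anticipate is the bookkeeping around disconnectedness and the exact constants: pinning down precisely how $\rk(M) \le r$ translates into "$n$ minus the multiplicity of $\lambda$ in $G$ is at most $r+1$ (or $r-1$)" depending on whether $\one$ itself lies in the $\lambda$-eigenspace and whether $G$ is connected, and showing the components-with-spectral-radius-$\lambda$ contribution is genuinely bounded by $(r-1)\frac{2\alpha}{1-\alpha}$ rather than $n \cdot \frac{2\alpha}{1+\alpha}$ — this requires observing that those components together use up at least as many "dimensions" in the kernel of $M$ as they have vertices minus one apiece, so they can be charged against $r$. Everything else is a direct assembly of the quoted lemmas.
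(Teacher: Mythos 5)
Your overall assembly (the max-degree switching via \Cref{lem:max-deg-upr-bnd}, the identity $\beta(G)=\frac{1-\alpha}{2\alpha}$ from \Cref{lem_beta_upper}, and a rank--nullity count against $\rk(M)\le r$) is the right skeleton, but two load-bearing steps are wrong or missing. First, the claim that for $t\ge 2$ one has $\beta(G)\ge\lambda_1(G_i)$ for every component is false: for $K_m$ together with one isolated vertex, every $\textbf{x}\perp\one$ satisfies $\textbf{x}^{\intercal}A\textbf{x}\le\|\textbf{x}\|^2$, so $\beta(G)\le 1$ while $\lambda_1(K_m)=m-1$. ``Correcting the Perron vector of $G_i$ by a small multiple of an indicator on another component'' does not work, since the correction coefficient is forced by the orthogonality constraint and need not be small, and it drags the Rayleigh quotient down. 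Consequently your case split by connectivity is not the right one; the relevant dichotomy is whether $\lambda_1(G)=\beta$ or $\lambda_1(G)>\beta$, and the latter can perfectly well happen with $G$ disconnected. In that case you need, and only assert, that the component with $\lambda_1>\beta$ is the \emph{unique} one with spectral radius at least $\beta$ --- this is exactly where the paper reruns the argument of \Cref{obs:disjoint-supports} with $\one$ in place of the Perron vector of $G$ (two components with spectral radius $\ge\beta$, one strictly, would produce a unit vector orthogonal to $\one$ with Rayleigh quotient exceeding $\beta=\beta(G)$). Without this uniqueness, $m_G(\beta)$ is not the second-eigenvalue multiplicity of a single connected graph and cannot be bounded by $\MC\left(n,\frac{1-\alpha}{2\alpha},\frac{6}{\alpha^4}\right)$.

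Second, the constants you defer (``I'd track the $+2$ and $-1$ carefully'') are not bookkeeping but the actual content of the first term. In the case $\lambda_1(G)=\beta$, the paper shows the strict inequality $\dim\ker M<m_G(\beta)$, i.e.\ $m_G(\beta)\ge n-r+1$: the $\beta$-eigenspace is the nullspace of $(1-\alpha)I_n-2\alpha A=M-\alpha J_n$, and by Perron--Frobenius it contains a nonnegative vector, which is not orthogonal to $\one$ and hence not in $\ker M$. Combined with the facts that each component attaining $\beta$ contributes multiplicity exactly one and has at least $\beta+1$ vertices, this gives $n\ge m_G(\beta)(\beta+1)\ge(n-r+1)(\beta+1)$ and hence $n\le(r-1)\bigl(1+\tfrac{2\alpha}{1-\alpha}\bigr)$. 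Your sketch instead bounds the number of such components by $n/(\beta+1)$ and hopes to ``charge them against $r$,'' which as written yields a bound with $n$ rather than $r-1$ in front of the $\frac{2\alpha}{1+\alpha}$ factor; the strict containment of $\ker M$ in the $\beta$-eigenspace is the missing idea. The moreover part inherits the first gap as well (uniqueness of the dominant component is again asserted, not proved). So the proposal follows the same general route as the paper but has genuine gaps in both cases of the dichotomy.
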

\begin{proof}
    Let $\Ce$ be the spherical $\{\pm \alpha\}$-code in $\R^r$ with $|\Ce|=n$ chosen via \Cref{lem:max-deg-upr-bnd} so that its corresponding graph $G$ has maximum degree at most $6/\alpha^4$.
Let $A = A_G$ be its adjacency matrix and let $\lambda_1 \ge \lambda_2 \ge \ldots \ge \lambda_n$ be its eigenvalues. Also, let $M$ denote the Gram matrix corresponding to $\Ce$. Recall that by definition, $M$ and $A$ are related via \eqref{eq_gram_adjacency}.
Using $n \ge r+2$ (which we can since otherwise $n \le r+1$ is smaller than our second term), we may apply \Cref{lem_beta_upper} to conclude that $\beta = \beta(G) = \frac{1-\alpha}{2\alpha}$ is an eigenvalue of $A$ and hence $\lambda_1 \geq \beta$.

If $\lambda_1 = \beta$, then $(1-\alpha)I_n - 2\alpha A$ and $\alpha J_n$ are both positive semidefinite, so \eqref{eq_gram_adjacency} implies that the intersection of their nullspaces is the nullspace of $M$. Moreover, the Perron--Frobenius theorem implies that there exists a nonzero vector in the nullspace of $(1-\alpha)I_n - 2\alpha A$ with all nonnegative coordinates. This guarantees that it is not in the nullspace of $\alpha J_n$ and thus the dimension of the nullspace of $M$ is strictly smaller than the dimension of the nullspace of $(1-\alpha)I_n - 2\alpha A$. 
The eigenspace of $\beta$ as an eigenvalue of $A$ equals the nullspace of $(1-\alpha)I_n - 2\alpha A$. Putting all of this together and using the rank--nullity theorem combined with the fact that rank of $M$ is at most $r$ (so its nullspace has dimension at least $n-r$), we get 
\begin{equation}\label{eq:multiplicity-bound-strong}
    n -r \le m_G(\beta) - 1.
\end{equation}
Let $G_1,\ldots,G_t$ be the connected components of $G$. By our case assumption, $\beta \ge \lambda_1(G_i)$ for all $i$. Let $s\ge 1$ be the number of components $G_i$ for which the equality holds.
Observe that any such $G_i$ must have at least $\beta+1$ vertices (otherwise, its maximum degree and hence also its largest eigenvalue is less than $\beta$) and so $n \ge s(\beta+1)$.
Since each $G_i$ is connected, the Perron--Frobenius Theorem guarantees that its largest eigenvalue has multiplicity one and thus, $s=m_G(\beta)$. Putting these observations together with \eqref{eq:multiplicity-bound-strong}, we get 
$$n\ge m_G(\beta)(\beta+1)\ge (n-r+1)(\beta+1) \implies n \le (r-1) \cdot (1+1/\beta),$$ 
which gives the first term in the desired bound. 

We now turn to the case where $\lambda_1>\beta$. 
Since $\lambda_2 \leq \beta$ and $\beta$ is an eigenvalue of $G$, we must have $\lambda_2 = \beta$. 
Note that the eigenspace of $\beta = \frac{1-\alpha}{2\alpha}$ equals the nullspace of $(1-\alpha)I_n - 2\alpha A=M-\alpha J_n$. Since the rank of $M$ is at most $r$ and the rank of $\alpha J_n$ equals one, we conclude that the rank of $M-\alpha J_n$ is at most $r+1$, so again by the rank--nullity theorem, the dimension of its nullspace is at least $n-r-1$. Therefore, in this case, we obtain a slightly weaker variant of \eqref{eq:multiplicity-bound-strong}:
\begin{equation}\label{eq:multiplicity-bound}
    n - r\le m_G(\beta) + 1.
\end{equation}

Let us again denote by $G_1,\ldots, G_t$, the connected components of $G$ ordered so that $\lambda_1(G_1)=\lambda_1$. 

We now claim that $\lambda_1(G_i) < \beta$ for all $i \geq 2$. The argument is a variant of \Cref{obs:disjoint-supports} that applies to the disconnected graph $G$. Let $i \geq 2$ be fixed and suppose, for the sake of contradiction, that $\lambda_1(G_i) \ge \beta$. As in the proof of \Cref{obs:disjoint-supports}, let $\textbf{v}, \textbf{u} \in \R^{V(G)}$ be unit length eigenvectors for $\lambda_1(G_1), \lambda_1(G_i)$, respectively, and observe that $\langle\textbf{v},\textbf{u}\rangle=0$. Let $a,b \in \R$ be chosen so that the vector $\textbf{x}=a\textbf{v}+b\textbf{u}$ is orthogonal to $\one$ and $a^2+b^2=1$. Via the Perron-Frobenius theorem, we may assume that all entries of $\textbf{v}$ and $\textbf{u}$ are nonnegative, so that neither $\textbf{v}$ nor $\textbf{u}$ are orthogonal to $\one$, and hence $a,b \neq 0$. Note that $\textbf{v}^{\intercal}A\textbf{u}=0$ since $G_1$ and $G_i$ are disconnected, so that using $\lambda_1(G_1) > \beta$, we conclude
$$\beta \geq \textbf{x}^{\intercal}A\textbf{x}=a^2 \textbf{v}^{\intercal}A\textbf{v} +b^2\textbf{u}^{\intercal}A\textbf{u}+2ab\textbf{v}^{\intercal}A\textbf{u}=a^2\lambda_1(G_1)+b^2\lambda_1(G_i)>\beta(a^2+b^2)=\beta,$$
a contradiction.

Having established that $\lambda_1(G_i) < \beta = \lambda_2$ for all $i \geq 2$, we conclude that $\lambda_2(G_1) = \lambda_2$ and moreover, $m_G(\beta) = m_{G_1}\left(\lambda_2\right) \le \MC\left(n, \frac{1-\alpha}{2\alpha}, \frac{6}{\alpha^4} \right)$, so that \eqref{eq:multiplicity-bound} gives the second term in the desired inequality.

For the moreover part of the lemma, note that in the above argument, the case $\lambda_1 = \beta$ cannot occur if there is no graph with largest eigenvalue equal to $\beta.$ 
\end{proof}

We are now ready to present a proof of a precise version of \Cref{thm:main} with an explicit $O(1)$ term of $20$. We note that we made no effort to optimize this constant and rather focused on making the argument as general and easy to read as possible. By being significantly more careful and tweaking multiple parts of the argument, we can improve this constant. In the instance relevant for \Cref{cor:main}, we can even improve it to $4+o(1)$ and believe that $2+o(1)$ is a natural barrier for our current methods. 

\begin{thm}\label{thm:main-precise}
Given $0< \alpha <1 $ and an integer $r\ge 2^{1/\alpha^{20}},$ the number of $\alpha$-equiangular lines in $\mathbb{R}^r$ is at most
$$
    \floor{\left(1+\frac{2\alpha}{1-\alpha}\right)(r-1)}. 
    $$
\end{thm}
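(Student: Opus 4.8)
The plan is to combine the reduction lemma (\Cref{lem:lines-to-multiplicity}) with the multiplicity bound obtained by applying the moreover part of \Cref{thm:multiplicity-combined} to the connected graph $G_1$ supplied by that reduction. Concretely, given $n$ $\alpha$-equiangular lines in $\R^r$, \Cref{lem:lines-to-multiplicity} tells us that either $n \le r-1+\floor{(r-1)\cdot \frac{2\alpha}{1-\alpha}}$ directly (the first term, which is exactly the bound we want), or else $n \le r+1+\MC\left(n,\frac{1-\alpha}{2\alpha},\frac{6}{\alpha^4}\right)$, i.e.\ $n$ is controlled by the second eigenvalue multiplicity of a connected graph $G$ on at most $n$ vertices with $\lambda_2 = \beta = \frac{1-\alpha}{2\alpha}$ and $\Delta \le 6/\alpha^4$. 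So the whole argument reduces to showing that in this second case, $\MC\left(n,\beta,6/\alpha^4\right)$ is at most roughly $\frac{n}{\beta+1} = \frac{n}{\beta+1}$ with a small lower-order error, and then checking that the resulting inequality for $n$ rearranges into the claimed bound.

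The key step is to verify that the hypothesis of the moreover part of \Cref{thm:multiplicity-combined} holds for $G_1$ with a suitable choice of $\eps$. That hypothesis requires $\log_{\Delta} n \ge \eps^{-1}(2\lambda_2+2)^{14}\log(\lambda_2+2)$ with $\lambda_2 = \beta \le \frac{1}{2\alpha}$ and $\Delta \le 6/\alpha^4$. Since $r \ge 2^{1/\alpha^{20}}$ and in the nontrivial case $n \ge r$, we have $\log n \ge 1/\alpha^{20}$, so $\log_\Delta n = \frac{\log n}{\log \Delta} \ge \frac{1/\alpha^{20}}{\log(6/\alpha^4)} = 1/\alpha^{20-o(1)}$, which comfortably dominates $(2\beta+2)^{14}\log(\beta+2) = (1/\alpha)^{14+o(1)}$; thus one can even afford to take $\eps$ polynomially small in $\alpha$, say $\eps = \alpha^{O(1)}$, and still satisfy the inequality. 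This yields $\MC\left(n,\beta,6/\alpha^4\right) \le \frac{n}{\beta+1} + n^{\eps}$, so plugging into \Cref{lem:lines-to-multiplicity} gives $n \le r+3+\frac{n}{\beta+1}+n^{\eps}$, hence $n\left(1-\frac{1}{\beta+1}\right) \le r+3+n^{\eps}$, i.e.\ $n \le \frac{\beta+1}{\beta}(r+3+n^{\eps}) = (1+1/\beta)(r + 3 + n^{\eps})$.

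It remains to absorb the additive slack $(1+1/\beta)(3+n^{\eps})$ into the floor. Since $1+1/\beta = 1+\frac{2\alpha}{1-\alpha}$ and the target is $\floor{(1+\frac{2\alpha}{1-\alpha})(r-1)}$, we need $(1+1/\beta)(3+n^{\eps}) \le (1+1/\beta) \cdot 2 + (\text{a sub-}1 \text{ correction absorbed by the floor})$ — more carefully, we want $(1+1/\beta)(r+3+n^{\eps}) \le (1+1/\beta)(r-1) + 1$, i.e.\ $(1+1/\beta)(4 + n^{\eps}) \le 1$... which is false as stated, so the slack must instead be handled by noting that $n^{\eps} \ll n \cdot \frac{2\alpha}{1-\alpha}$, i.e.\ one should first bound $n$ crudely (e.g.\ $n \le 2r$ from the linear bounds of \cite{igor-felix-paper}, or bootstrapped from the inequality itself), conclude $n^{\eps} \le (2r)^{\eps} = r^{\eps(1+o(1))}$, and then observe that since $r \ge 2^{1/\alpha^{20}}$, we have $r^{\eps} \cdot (1+1/\beta) = o\!\left(r \cdot \frac{2\alpha}{1-\alpha}\right)$ provided $\eps$ is chosen with $\eps \log r$ a small enough multiple of $\log r$ — in fact $r^\eps < \alpha r/100$ suffices and holds because $\eps = \alpha^{O(1)}$ and $\log r \ge \alpha^{-20}$. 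Thus $n \le (1+\frac{2\alpha}{1-\alpha})r + o(\alpha r) - (1+\frac{2\alpha}{1-\alpha})$, which is at most $(1+\frac{2\alpha}{1-\alpha})(r-1)$ once the $o(\alpha r)$ term is smaller than the $\frac{2\alpha}{1-\alpha}$ gained from replacing $r$ by $r-1$ would lose — one should double-check the exact bookkeeping here, but the point is that $n^\eps$ is polynomially small in $r$ while the available slack is linear in $\alpha r$. The main obstacle, and the part requiring genuine care rather than routine estimation, is precisely this final bookkeeping: making sure the choice of $\eps$ simultaneously satisfies the hypothesis of \Cref{thm:multiplicity-combined} (which pushes $\eps$ down, needing $\log_\Delta n$ large) and keeps $n^\eps$ negligible against $\alpha r$ (which also wants $\eps$ small but interacts with the $r \ge 2^{1/\alpha^{20}}$ threshold), and then confirming the final integer inequality tightens correctly into the floor — I expect the constant $20$ in the exponent to be exactly what makes all these constraints consistent.
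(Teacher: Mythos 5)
There is a genuine gap, and it is exactly at the point you flagged as needing ``bookkeeping.'' The target bound $\floor{\left(1+\frac{2\alpha}{1-\alpha}\right)(r-1)}$ is attained exactly by the known construction, so there is no slack of order $\alpha r$ available: the term $\frac{2\alpha}{1-\alpha}(r-1)$ is part of the bound, not room to spare, and the floor can only absorb a fractional part strictly less than $1$. Your route through the moreover part of \Cref{thm:multiplicity-combined} produces $\MC \le \frac{n}{\lambda_2+1}+n^{\eps}$, and after combining with \Cref{lem:lines-to-multiplicity} (which gives $n \le r+1+\MC$, not $r+3+\MC$) you end up with $n \le \left(1+\frac{2\alpha}{1-\alpha}\right)(r-1) + \left(1+\frac{2\alpha}{1-\alpha}\right)(2+n^{\eps})$; the extra additive term is at least $3$ (indeed polynomially large in $r$), so it cannot be swallowed by the floor, and your proposed fix --- comparing $n^{\eps}$ against ``$o(\alpha r)$ slack'' --- rests on the misconception that such slack exists. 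No choice of $\eps$ repairs this, because the $n^{\eps}$ error is intrinsic to the disconnected-graph statement of \Cref{thm:multiplicity-combined} (it pays for small components), and the exponent $20$ in the dimension hypothesis is not what reconciles these constraints.

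The paper's proof avoids this entirely by exploiting that $\MC$ is by definition a bound over \emph{connected} graphs (the reduction to a single connected component is done inside \Cref{lem:lines-to-multiplicity}), and therefore applies \Cref{thm:multiplicity-approx-bound} directly with $r=4$ and $B=\lambda_2+3$: in a connected graph any vertex of degree larger than $\lambda_2-1$ has at least $\lambda_2+3$ vertices within distance four, and the hypothesis $2^{9}\cdot 4\cdot\lambda_2^{18}\log B\log\Delta \le (1/\alpha)^{20}\le \log n$ is where the exponent $20$ is used. This gives the clean bound $\MC \le \frac{n}{\lambda_2+2}$ with no additive error; since $\frac{n}{\lambda_2+1}-\frac{n}{\lambda_2+2}=\frac{n}{(\lambda_2+1)(\lambda_2+2)}\gg 2$, the gain of one unit in the denominator is precisely what absorbs the $+2$ coming from the lemma, yielding $n \le r-1+\frac{n}{\lambda_2+1}$, hence $n \le \left(1+\frac{2\alpha}{1-\alpha}\right)(r-1)$ exactly, and the floor follows from integrality of $n$. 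You would also need to treat the case $\alpha>1/3$ (where $\lambda_2<1$ and \Cref{thm:multiplicity-approx-bound} does not apply) separately, as the paper does by noting via \Cref{obs:disjoint-supports} that a connected graph with $\lambda_2<1$ has at most $2\Delta^2$ vertices.
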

\begin{proof}
Let $n$ denote the maximum number of $\alpha$-equiangular lines in $\mathbb{R}^r$. We may assume $n \ge r+2$, since otherwise $n \le r+1 \le r-1+\frac{2\alpha}{1-\alpha}(r-1)$ and the desired inequality holds.

Suppose for now that $\alpha \le 1/3$, so that $\lambda_2 = \frac{1-\alpha}{2\alpha} \ge 1.$ Using \Cref{thm:multiplicity-approx-bound} with $r=4$ and $B=\lambda_2+3$, we get $$\MC(n,\lambda_2,6/\alpha^4) \le \frac{n}{B-1} \le \frac{n}{\lambda_2+2},$$ where the requirements of the theorem are satisfied since in a connected graph, any vertex of degree larger than $\lambda_2-1$ will have at least $\lambda_2+3$ vertices within distance four from it and in addition, $2^9 r \log \Delta \log B \lambda_2^{4r+2} \le  2^{11} \left( 4\log \lambda_2 + 9 \right)\log (\lambda_2+3) \lambda_2^{18}\le 2^{16}\lambda_2^{20} < (1/\alpha)^{20} \le \log n.$ Now observe that \Cref{lem:lines-to-multiplicity} implies that either $n
\le \left(1+\frac{2\alpha}{1-\alpha}\right)(r-1)$ or $$n \le r + 1 + \MC(n,\lambda_2,6/\alpha^4) \le r-1+\frac{n}{\lambda_2+1}  \implies n \le \left(1 + \frac{1}{\lambda_2} \right)(r-1) = \left(1+\frac{2\alpha}{1-\alpha}\right)(r-1).$$ In either case, we obtain $n \le \left(1+\frac{2\alpha}{1-\alpha}\right)(r-1)$ and thus, the desired bound follows from the integrality of $n$.

Otherwise if $\alpha>1/3$, then we have $\lambda_2<1$ and thus \Cref{obs:disjoint-supports} implies that a connected graph $G$ with second eigenvalue $\lambda_2$ can not contain an induced matching of size $2$. Therefore, if we fix an edge $vu$, then outside of $\{v,u\} \cup N(u) \cup N(v)$ there can be no edges. This implies that the number of vertices of $G$ is at most $2\Delta(G)^2$. Hence, 
$$\MC(n,\lambda_2,6/\alpha^4) \le 2(6/\alpha^4)^2 \le \frac{2\alpha}{1-\alpha} \cdot (r-1)-2$$
and so, the desired bound follows via \Cref{lem:lines-to-multiplicity}.
\end{proof}

As mentioned in the introduction, the bound in this theorem is tight whenever $\alpha=\frac{1}{2k-1}$ (so $\beta=\frac{1-\alpha}{2\alpha}=k-1$) for any $k \in \mathbb{N}$. Indeed, there is a well-known construction which achieves equality in the case $\lambda_1=\beta$ of the above argument. This construction was pointed out by Bukh \cite{B17}, although it already follows from a result of Greaves, Koolen, Munemasa, and Sz\"oll\H{o}si \cite{GKMS16} (and is implicit in \cite{LS73} when they consider the case of $k=2$). We start with letting $G$ consist of the disjoint union of $t = \left \lfloor \frac{r-1}{k-1} \right \rfloor$ copies of a clique on $k$ vertices and $r - t(k-1) - 1$ isolated vertices. $A = A_G$ is then an $n \times n$ matrix where $$n = tk + r - t(k-1) - 1 = r - 1 + t = \left \lfloor \left(1 + \frac{1}{\beta} \right) (r-1) \right \rfloor,$$ it has $t$ eigenvalues being $k-1$, and all remaining eigenvalues are either $0$ or $-1$. Therefore, if we define $M = (1-\alpha)I + \alpha J - 2 \alpha A$, then for any nonzero vector $\textbf{x}$, we have $$\textbf{x}^\intercal M \textbf{x} = (1-\alpha)||\textbf{x}||^2 + \alpha \inprod{\textbf{x}}{\one}^2 - 2\alpha \textbf{x}^\intercal A \textbf{x} \geq \left((1-\alpha) - 2\alpha(k-1)\right) ||\textbf{x}||^2 = 0$$ with equality if and only if $\textbf{x}$ is orthogonal to $\one$ and it lies in the eigenspace of $A$ corresponding to $k-1$. Thus, $M$ is positive semidefinite, and its nullspace has dimension at least $t-1$, so its rank is at most $n - t + 1 = r$. It follows that $M$ must be the Gram matrix of some vectors $v_1, \ldots, v_n \in \R^r$ and moreover, the diagonal entries of $M$ are all $1$ and all off-diagonal entries are in $\{\pm \alpha\}$, so $\{v_1, \ldots, v_n\}$ is the desired spherical $\{\pm \alpha\}$-code. 

We conclude the section with a proof of a more precise version of \Cref{thm:superpolynomial}.

\begin{thm}\label{thm:superpolynomial-precise}
    Given $0< \alpha <1$ and an integer $r\ge 1/\alpha^{4},$ the number of $\alpha$-equiangular lines in $\mathbb{R}^r$ is at most
$$
r+r \cdot \max\left\{\frac{66\log^2 \log_{1/\alpha} r}{\log_{1/\alpha} r}, \: 17\alpha \log \frac{2}{\alpha} \right\}.
$$ 
\end{thm}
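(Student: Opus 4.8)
The plan is to feed \Cref{thm:multiplicity-combined} into \Cref{lem:lines-to-multiplicity} and unwind the resulting recursion. Set $\lambda_2:=\frac{1-\alpha}{2\alpha}$ and $\Delta:=6/\alpha^4$, let $n$ be the maximum number of $\alpha$-equiangular lines in $\mathbb{R}^r$, and write $\mu^\star:=\max\!\left\{\,\frac{66\log^2\log_{1/\alpha}r}{\log_{1/\alpha}r},\ 17\alpha\log\tfrac2\alpha\,\right\}$, so the goal is $n\le r+r\mu^\star$. We may assume $n\ge r+2$, since otherwise the bound is immediate. The first step is to dispose of the easy regime $\mu^\star\ge1$: then $r+r\mu^\star\ge 2r$, so the conclusion already follows from the linear bound $n\le 2r-2$ of \cite{igor-felix-paper,igor-paper}, which applies since $r\ge 1/\alpha^4$ lies above the threshold $r_\alpha\approx\frac1{4\alpha^4}$. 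Hence from now on we may assume $\mu^\star<1$; this forces $\alpha$ below an absolute constant (so $\lambda_2$ is above an absolute constant and $\log\Delta=\log6+4\log\tfrac1\alpha<5\log\tfrac1\alpha$, whence $\log_\Delta r>\tfrac15\log_{1/\alpha}r$) and forces $\log_{1/\alpha}r$ above an absolute constant.

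Now apply \Cref{lem:lines-to-multiplicity}. Its first alternative gives $n\le(r-1)\big(1+\tfrac{2\alpha}{1-\alpha}\big)\le r+3r\alpha\le r+r\mu^\star$ (using $\alpha<\tfrac13$ and $\log\tfrac2\alpha\ge1$), so it suffices to treat the alternative $n\le r+1+\MC(n,\lambda_2,\Delta)$. To bound $\MC(n,\lambda_2,\Delta)$, take any connected $H$ with $|V(H)|\le n$, maximum degree at most $\Delta$ and $\lambda_2(H)=\lambda_2$; \Cref{thm:multiplicity-combined} gives $m_H(\lambda_2)\le 5|V(H)|\cdot\max\!\left\{\frac{\log(\lambda_2+1)}{\lambda_2},\ \frac{\log^2(1+\log_\Delta|V(H)|)}{\log_\Delta|V(H)|}\right\}$. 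Since a short elementary calculation shows $m\mapsto \frac{m\log^2(1+\log_\Delta m)}{\log_\Delta m}$ is nondecreasing for $\Delta\ge 2$, since $|V(H)|\le n$, and since $x\mapsto\frac{\log^2(1+x)}{x}$ is decreasing past an absolute constant (and $\log_\Delta r$ is large in our regime, so $\log_\Delta n>\log_\Delta r$ lies in the decreasing range), this is at most $5n\bar\mu$ with $\bar\mu:=\max\!\left\{\frac{\log(\lambda_2+1)}{\lambda_2},\ \frac{\log^2(1+\log_\Delta r)}{\log_\Delta r}\right\}$ now independent of $n$. Plugging into $n\le r+1+5n\bar\mu$ — legitimate to solve because $5\bar\mu<\tfrac12$ here (both entries are small, as $\lambda_2$ and $\log_\Delta r$ exceed suitable absolute constants) — yields $n\le\frac{r+1}{1-5\bar\mu}$.

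It remains to show $\frac{r+1}{1-5\bar\mu}\le r(1+\mu^\star)$, equivalently $\mu^\star-5\bar\mu-5\bar\mu\mu^\star\ge 1/r$. Here one uses $\lambda_2=\frac{1-\alpha}{2\alpha}$ to write $\frac{\log(\lambda_2+1)}{\lambda_2}=\frac{2\alpha}{1-\alpha}\log\frac{1+\alpha}{2\alpha}$, which is essentially $2\alpha\log\tfrac1\alpha$ and which sits strictly below $\tfrac5{17}$ of $17\alpha\log\tfrac2\alpha$ with a genuine surplus (the $\log\tfrac2\alpha$ appearing in $\mu^\star$ beats the $\log\tfrac1\alpha$ coming out of $\bar\mu$, leaving a surplus of order $\alpha\log\tfrac1\alpha$), and one uses $\log_\Delta r\ge\tfrac15\log_{1/\alpha}r$ together with $\log^2(1+\tfrac15x)\le\log^2x$ to get $\frac{\log^2(1+\log_\Delta r)}{\log_\Delta r}\le\frac{5\log^2\log_{1/\alpha}r}{\log_{1/\alpha}r}=\tfrac5{66}\cdot\frac{66\log^2\log_{1/\alpha}r}{\log_{1/\alpha}r}$. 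Crucially $\bar\mu$ itself is always tiny in this regime, and whenever $\mu^\star$ fails to be small it is because either $\alpha$ is bounded away from $0$ (so the first entry of $\bar\mu$ is only moderately large, and the surplus over it persists) or $\log_{1/\alpha}r$ is only moderately large (bounding the second entry the same way); in every case $5\bar\mu\le c\mu^\star$ for an explicit $c<1$ and the cross term $5\bar\mu\mu^\star$ is lower order, so $\mu^\star-5\bar\mu-5\bar\mu\mu^\star$ stays at least a positive constant times $\alpha$ (or a positive constant), which comfortably exceeds $1/r\le\alpha^4$.

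The main obstacle is exactly this last paragraph: the clean recursion bound $n\le\frac{r+1}{1-5\bar\mu}$ is only usable once the band where $\mu^\star$ is near $1$ has been peeled off via the linear bound, and the specific constants $66$ and $17$ in the statement are tuned so that the translation losses — $\log\Delta\approx 4\log\tfrac1\alpha$ rather than $\log\tfrac1\alpha$, $\log^2(1+\cdot)$ rather than $\log^2(\cdot)$, and the factor $5$ from \Cref{thm:multiplicity-combined} — together with the $(1+1/r)$ inflation and the quadratic cross term all still close. Making this rigorous amounts to an elementary but somewhat delicate case split according to which of the two entries of $\bar\mu$ (equivalently of $\mu^\star$) dominates and how large $\mu^\star$ is.
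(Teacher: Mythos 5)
Your proposal is correct and follows essentially the same route as the paper: peel off the regime where the target bound exceeds a fixed multiple of $r$ using the known linear bound, feed the dense-regime multiplicity bound into \Cref{lem:lines-to-multiplicity}, and solve the resulting self-referential inequality in $n$. The only substantive difference is bookkeeping: you invoke \Cref{thm:multiplicity-combined} with its constant $5$, which makes the closing arithmetic genuinely tight near $\mu^\star\approx 1$ (it is rescued exactly by the $\log\frac{1+\alpha}{2\alpha}$ versus $\log\frac{2}{\alpha}$ surplus and the $\tfrac{25}{66}$ ratio you identify, and your deferred case split does go through), whereas the paper, since $\MC$ ranges over connected graphs, effectively uses the constant $4$ of \Cref{thm:multiplicity-bound-dense} and lands directly on the constants $16$ and $64$, comfortably below $17$ and $66$, with no delicate boundary analysis.
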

\begin{proof}
Let $n$ denote the maximum number of $\alpha$-equiangular lines in $\mathbb{R}^r$. From \cite[Theorem 1.2]{igor-paper} we have an upper bound of $(2+3\alpha)r$ which is smaller than the second term in our maximum unless $\alpha<2^{-7}$. Moreover, if $\alpha<2^{-7}$ then $(2+3\alpha)r$ is also smaller than the first term unless $(1+3\alpha)\log_{1/\alpha} r > 66 \log^2 \log_{1/\alpha} r \implies \log_{1/\alpha} r > 64 \log^2 \log_{1/\alpha} r$.

We may assume $n \ge r+2$, since otherwise $n \le r+1 \le r+2\alpha r \log \frac{2}{\alpha}$ and the desired inequality holds. Next, we apply \Cref{lem:lines-to-multiplicity}. Note that $\frac{2\alpha}{1-\alpha} \le 2\alpha \log \frac{2}{\alpha}$, so if the first term in \Cref{lem:lines-to-multiplicity} is larger, then we are done. Hence, we may assume that the second term is larger, so that we have
\[
n \leq r+1 + \MC(n,\lambda_2,6/\alpha^4),
\]
where $\MC(n,\lambda_2,6/\alpha^4)$ is the second eigenvalue multiplicity of an $n$-vertex graph $G$ with second eigenvalue $\lambda_2=\frac{1-\alpha}{2\alpha}$ and maximum degree $\Delta\le 6/\alpha^4$. To bound this multiplicity, we may apply \Cref{thm:multiplicity-combined}, which implies either that
$$n \le r+1 + \frac{4n\log (\lambda_2+1)}{\lambda_2} = r+1 + \frac{8n\alpha \log \frac{1+\alpha}{2\alpha}}{1-\alpha}\le r+8n \alpha \log \frac{2}{\alpha} \implies n \le r + r \cdot 16 \alpha \log \frac{2}{\alpha},$$
where the implication uses $8 \alpha \log \frac{2}{\alpha} \le \frac12$, or otherwise, that
$$n \le r+1 + \frac{4n \log^2 (1+\log_{\Delta} n)}{\log_{\Delta} n}
\le  r + \frac{32 n \log^2 \log_{1/\alpha} r}{\log_{1/\alpha} r}
\implies n \le r+r \cdot \frac{64 \log^2 \log_{1/\alpha} r}{\log_{1/\alpha} r},$$
where the second inequality uses $\log_{\Delta} n \ge \frac{\log n}{\log (6/\alpha^4)}\ge \frac{\log r}{8\log (1/\alpha)}=\frac18\log_{1/\alpha} r$ and the implication uses the fact that $\frac{32\log^2 \log_{1/\alpha}{r}}{\log_{1/\alpha} r} \leq \frac{1}{2}$.
\end{proof}

\section{Concluding remarks}
In this paper, we determined that the maximum number of equiangular lines in $\mathbb{R}^r$ with common angle $\arccos \alpha$ is $r + o(r)$ provided that $r$ is at least superpolynomial in $1/\alpha \to \infty.$ It would be interesting to determine whether having $r$ be a large polynomial in $1/\alpha$ already suffices for this result.

We also precisely determined the maximum number of equiangular lines in $\mathbb{R}^r$ with common angle $\frac{1}{2k-1}$ for $k \in \mathbb{N}$ where $r$ is at least exponential in $k^{O(1)}$, improving on the previous, doubly exponential requirements from \cite{igor-paper,yufei-paper}. The most immediate open question is whether the same behavior extends to the subexponential regime as well. Here, \Cref{thm:superpolynomial-precise} gives an upper bound of the form $r+\frac{r}{(\log_{k} r)^{1-o(1)}}$, which improves the best-known bounds already when $r$ is a large polynomial in $k$, but is not strong enough to match the best-known constructions.

\Cref{cor:main} demonstrates that \Cref{thm:main} is tight when 
$1/\alpha$ is an odd integer. We note that one can also use our tools to improve \Cref{thm:main} when this is not the case. 
In fact, we can determine the correct answer in the exponential regime for an infinite family of other $\alpha$, namely whenever the spectral radius order of $\frac{1-\alpha}{2\alpha}$ is small. This only requires one to take a slightly larger $r$ in the application of \Cref{thm:multiplicity-approx-bound} at the end of the proof of \Cref{thm:main-precise}. The cost of this is an increase in the required lower bound on the dimension. However, so long as the spectral radius order remains close to $\frac{1}{2\alpha}$, the requirement remains exponential.

Our results are based on significant improvements to the best-known bounds on the second eigenvalue multiplicity of a graph. For sufficiently dense graphs, our bound improves upon a result of McKenzie, Rasmussen, and Srivastava \cite{tcs-paper}, extending it from regular to arbitrary graphs and substantially extending the range of density for which the bound is non-trivial. For sparser graphs, we prove the following appealing inequality for an $n$ vertex graph $G$ with second eigenvalue $\lambda_2$.
\begin{equation}\label{eq:dream}
    m_G(\lambda_2)\le \frac{n}{\lambda_2+1}+n^{o(1)},
\end{equation} 
provided that the maximum degree of $G$ is upper bounded by a polylogarithmic function in $n$. It is, however, possible that this bound holds already when the maximum degree is only required to be at most $n^{o(1)}$. If this were the case, it would prove an only slightly weaker version of \Cref{thm:main} already when $r$ is polynomial in $1/\alpha$. 
Moreover, a slightly stronger bound in \eqref{eq:dream} for sufficiently dense connected graphs would lead to a full solution of the equiangular lines problem 
when $1/\alpha$ is an odd integer already from the polynomial dimension onwards.
Indeed, one can not in general improve upon \eqref{eq:dream} as the graph might be a disjoint union of cliques on $\lambda_2+1$ vertices, whereas our \Cref{cor:multiplicity-bound} shows that one can improve upon it when the graph is assumed to be connected and sufficiently sparse. 

This inequality is reminiscent of a classical conjecture in spectral graph theory, due to Powers \cite{powers89} from 1989, which asserts that if $\lambda_1 \ge \ldots \ge \lambda_n$ are the eigenvalues of an $n$ vertex graph, then $\lambda_i \le \floor{n/i}$. This appealing conjecture has unfortunately been disproved for $i \ge 4$. But a number of variants have been raised over the years with, in particular, Nikiforov \cite{nikiforov} recently proving a number of results surrounding the problem. See the survey \cite{spectral-survey} for more details.

McKenzie, Rasmussen, and Srivastava \cite{tcs-paper} proved that the second eigenvalue multiplicity of the \emph{normalized} adjacency matrix of a connected graph with maximum degree $\Delta$ is at most $\frac{n \Delta^{7/5}}{(\log n)^{1/5-o(1)}}$. While there is generally no direct relation between normalized and usual adjacency matrices, if the underlying graph is regular, they are the same up to a scalar multiple. This implies that this bound also holds for regular graphs and the usual adjacency matrix. Note that when $\lambda_2$ is small (so in the superexponential regime), this bound improves upon \eqref{eq:dream}. It is an intriguing question to determine if one can combine their approach with ours in this regime to remove the regularity assumption. However, unfortunately, similar types of obstructions cause issues for both of our arguments.

As mentioned in \Cref{sec:second-eigenvalue}, Letrouit and Machado \cite{LM24} have recently adapted the approach of Jiang, Tidor, Yao, Zhang, and Zhao \cite{yufei-paper} to the setting of negatively curved two-dimensional Riemannian manifolds. This allowed them to establish a bound on the second eigenvalue multiplicity of a corresponding Laplace-Beltrami operator, thereby making progress on a conjecture of Colin de Verdi\`ere \cite{C87}. Since our approach substantially improves the multiplicity bound of \cite{yufei-paper} in the setting of graphs, it would be interesting to see if it can also be generalized to the setting of Riemannian manifolds.

\textbf{Acknowledgements.}
We want to thank Alp M\"uyesser, Hung-Hsun Hans Yu, Theo McKenzie, Mehtaab Sawhney, Varun Sivashankar, Carl Schildkraut, Benny Sudakov, and Shengtong Zhang for useful comments and discussions.
The first author would like to thank the Mathematics Department at Princeton University for hosting him while part of this work took place. The second author would like to gratefully acknowledge the support of the Oswald Veblen Fund and the Institute for Advanced Study in Princeton. 




\providecommand{\MR}[1]{}
\providecommand{\MRhref}[2]{%
  \href{http://www.ams.org/mathscinet-getitem?mr=#1}{#2}
}


\begin{thebibliography}{10}

\bibitem{ABGHM22}
M.~Appleby, I.~Bengtsson, M.~Grassl, M.~Harrison, and G.~McConnell, \emph{{SIC-POVMs} from stark units: Prime dimensions $n^2 + 3$}, J. Math. Phys. \textbf{63} (2022), no.~11, 112205.

\bibitem{AFMY17}
M.~Appleby, S.~Flammia, G.~McConnell, and J.~Yard, \emph{{SICs} and algebraic number theory}, Found. Phys. \textbf{47} (2017), no.~8, 1042--1059.

\bibitem{AFMY20}
M.~Appleby, S.~Flammia, G.~McConnell, and J.~Yard, \emph{Generating ray class fields of real quadratic fields via complex equiangular lines}, Acta Arith. \textbf{192} (2020), no.~3, 211--233. \MR{4048602}

\bibitem{igor-paper}
I.~Balla, \emph{Equiangular lines via matrix projection}, preprint arXiv:2110.15842 (2021).

\bibitem{igor-felix-paper}
I.~Balla, F.~Dr\"axler, P.~Keevash, and B.~Sudakov, \emph{Equiangular lines and spherical codes in {E}uclidean space}, Invent. Math. \textbf{211} (2018), no.~1, 179--212. \MR{3742757}

\bibitem{B19}
G.~Basso, \emph{Computation of maximal projection constants}, J. Funct. Anal. \textbf{277} (2019), no.~10, 3560--3585. \MR{4001080}

\bibitem{B17}
I.~Bengtsson, \emph{The number behind the simplest {SIC-POVM}}, Found. Phys. \textbf{47} (2017), no.~8, 1031--1041.

\bibitem{B53}
L.~M. Blumenthal, \emph{Theory and applications of distance geometry}, Clarendon Press, Oxford, 1953.

\bibitem{B16}
B.~Bukh, \emph{Bounds on equiangular lines and on related spherical codes}, SIAM J. Discrete Math. \textbf{30} (2016), no.~1, 549--554. \MR{3477753}

\bibitem{CKLY22}
M.-Y. Cao, J.~H. Koolen, Y.-C.~R. Lin, and W.-H. Yu, \emph{The {L}emmens-{S}eidel conjecture and forbidden subgraphs}, J. Combin. Theory Ser. A \textbf{185} (2022), Paper No. 105538, 28. \MR{4316717}

\bibitem{CKP13}
P.~G. Casazza, G.~Kutyniok, and F.~Philipp, \emph{Introduction to finite frame theory}, Finite frames, Appl. Numer. Harmon. Anal., Birkh\"auser/Springer, New York, 2013, pp.~1--53. \MR{2964006}

\bibitem{CFS02}
C.~M. Caves, C.~A. Fuchs, and R.~Schack, \emph{Unknown quantum states: the quantum de {F}inetti representation}, J. Math. Phys. \textbf{43} (2002), no.~9, 4537--4559, Quantum information theory. \MR{1924454}

\bibitem{colding}
T.~H. Colding and W.~P. Minicozzi, II, \emph{Harmonic functions on manifolds}, Ann. of Math. (2) \textbf{146} (1997), no.~3, 725--747. \MR{1491451}

\bibitem{C87}
Y.~Colin~de Verdi\`ere, \emph{Construction de laplaciens dont une partie finie du spectre est donn\'ee}, Ann. Sci. \'Ecole Norm. Sup. (4) \textbf{20} (1987), no.~4, 599--615. \MR{932800}

\bibitem{C73}
H.~S.~M. Coxeter, \emph{Regular polytopes}, 3rd ed., Dover Publications, 1973.

\bibitem{DL23}
B.~Der\c{e}gowska and B.~Lewandowska, \emph{A simple proof of the {G}r{\"u}nbaum conjecture}, J. Funct. Anal. \textbf{285} (2023), no.~2, Paper No. 109950, 8. \MR{4575685}

\bibitem{FS17}
S.~Foucart and L.~Skrzypek, \emph{On maximal relative projection constants}, J. Math. Anal. Appl. \textbf{447} (2017), no.~1, 309--328. \MR{3566474}

\bibitem{FHS17}
C.~A. Fuchs, M.~C. Hoang, and B.~C. Stacey, \emph{The {SIC} question: History and state of play}, Axioms \textbf{6} (2017), no.~3, 21.

\bibitem{FS19}
C.~A. Fuchs and B.~C. Stacey, \emph{{QBism}: Quantum theory as a hero's handbook}, Foundations of Quantum Theory, IOS Press, 2019, pp.~133--202.

\bibitem{FS03}
C.~A. Fuchs and M.~Sasaki, \emph{Squeezing quantum information through a classical channel: measuring the ``quantumness'' of a set of quantum states}, Quantum Inf. Comput. \textbf{3} (2003), no.~5, 377--404. \MR{2006047}

\bibitem{GY18}
A.~Glazyrin and W.-H. Yu, \emph{Upper bounds for {$s$}-distance sets and equiangular lines}, Adv. Math. \textbf{330} (2018), 810--833. \MR{3787558}

\bibitem{GR01}
C.~Godsil and G.~F. Royle, \emph{Algebraic graph theory}, vol. 207, Springer Science \& Business Media, 2001.

\bibitem{GKMS16}
G.~Greaves, J.~H. Koolen, A.~Munemasa, and F.~Sz\"oll\H~osi, \emph{Equiangular lines in {E}uclidean spaces}, J. Combin. Theory Ser. A \textbf{138} (2016), 208--235. \MR{3423477}

\bibitem{gromov}
M.~Gromov, \emph{Groups of polynomial growth and expanding maps}, Inst. Hautes \'Etudes Sci. Publ. Math. (1981), no.~53, 53--73. \MR{623534}

\bibitem{HS47}
J.~Haantjes and J.~J. Seidel, \emph{The congruence order of the elliptic plane}, Indagationes Mathematicae \textbf{9} (1947), 403--405.

\bibitem{HC17}
J.~I. Haas and P.~G. Casazza, \emph{On the structures of {Grassmannian} frames}, 2017 International Conference on Sampling Theory and Applications (SampTA), {IEEE}, July 2017, pp.~377--380.

\bibitem{schildkraut-paper}
M.~Haiman, C.~Schildkraut, S.~Zhang, and Y.~Zhao, \emph{Graphs with high second eigenvalue multiplicity}, Bull. Lond. Math. Soc. \textbf{54} (2022), no.~5, 1630--1652. \MR{4499595}

\bibitem{JP20}
Z.~Jiang and A.~Polyanskii, \emph{Forbidden subgraphs for graphs of bounded spectral radius, with applications to equiangular lines}, Israel J. Math. \textbf{236} (2020), no.~1, 393--421. \MR{4093893}

\bibitem{yufei-paper}
Z.~Jiang, J.~Tidor, Y.~Yao, S.~Zhang, and Y.~Zhao, \emph{Equiangular lines with a fixed angle}, Ann. of Math. (2) \textbf{194} (2021), no.~3, 729--743. \MR{4334975}

\bibitem{kleiner}
B.~Kleiner, \emph{A new proof of {G}romov's theorem on groups of polynomial growth}, J. Amer. Math. Soc. \textbf{23} (2010), no.~3, 815--829. \MR{2629989}

\bibitem{KLL83}
H.~K\"onig, D.~Lewis, and P.~K. Lin, \emph{Finite dimensional projection constants}, Studia Math. \textbf{75} (1983), no.~3, 341--358.

\bibitem{lee2008eigenvalue}
J.~R. Lee and Y.~Makarychev, \emph{Eigenvalue multiplicity and volume growth}, preprint arXiv:0806.1745 (2008).

\bibitem{LS73}
P.~W.~H. Lemmens and J.~J. Seidel, \emph{Equiangular lines}, J. Algebra \textbf{24} (1973), 494--512.

\bibitem{LM24}
C.~Letrouit and S.~Machado, \emph{Maximal multiplicity of {L}aplacian eigenvalues in negatively curved surfaces}, Geom. Funct. Anal. \textbf{34} (2024), no.~5, 1609--1645. \MR{4792842}

\bibitem{spectral-survey}
L.~Liu and B.~Ning, \emph{Unsolved problems in spectral graph theory}, Oper. Res. Trans. \textbf{27} (2023), no.~4, 33--60. \MR{4742366}

\bibitem{tcs-paper}
T.~McKenzie, P.~M.~R. Rasmussen, and N.~Srivastava, \emph{Support of closed walks and second eigenvalue multiplicity of graphs}, S{TOC} '21---{P}roceedings of the 53rd {A}nnual {ACM} {SIGACT} {S}ymposium on {T}heory of {C}omputing, ACM, New York, 2021, pp.~396--407. \MR{4398851}

\bibitem{N89}
A.~Neumaier, \emph{Graph representations, two-distance sets, and equiangular lines}, Linear Algebra Appl. \textbf{114/115} (1989), 141--156. \MR{986870}

\bibitem{nikiforov-eigenvalue-growth}
V.~Nikiforov, \emph{Revisiting two classical results on graph spectra}, Electron. J. Combin. \textbf{14} (2007), no.~1, Research Paper 14, 7. \MR{2285818}

\bibitem{nikiforov}
V.~Nikiforov, \emph{Extrema of graph eigenvalues}, Linear Algebra Appl. \textbf{482} (2015), 158--190. \MR{3365272}

\bibitem{powers89}
D.~L. Powers, \emph{Bounds on graph eigenvalues}, Linear Algebra Appl. \textbf{117} (1989), 1--6. \MR{993025}

\bibitem{RBSC04}
J.~M. Renes, R.~Blume-Kohout, A.~J. Scott, and C.~M. Caves, \emph{Symmetric informationally complete quantum measurements}, Jour. Math. Phys. \textbf{45} (2004), no.~6, 2171--2180.

\bibitem{HS03}
T.~Strohmer and R.~W. Heath, Jr., \emph{Grassmannian frames with applications to coding and communication}, Appl. Comput. Harmon. Anal. \textbf{14} (2003), no.~3, 257--275. \MR{1984549}

\bibitem{Y17}
W.~H. Yu, \emph{New bounds for equiangular lines and spherical two-distance sets}, SIAM J. Discrete Math. \textbf{31} (2017), no.~2, 908--917.

\bibitem{Z11}
G.~Zauner, \emph{Grundz\"{u}ge einer nichtkommutativen {D}esigntheorie}, Ph.D. thesis, University of Vienna, 1999, Published in English translation: Int. J. Quantum Inf. 9, 2011.

\bibitem{notices}
Y.~Zhao, \emph{Equiangular lines and eigenvalue multiplicities}, Notices Amer. Math. Soc \textbf{71} (2024), no.~9, 1151.

\end{thebibliography}

\providecommand{\bysame}{\leavevmode\hbox to3em{\hrulefill}\thinspace}
\providecommand{\MR}{\relax\ifhmode\unskip\space\fi MR }
\providecommand{\MRhref}[2]{%
  \href{http://www.ams.org/mathscinet-getitem?mr=#1}{#2}
}
\providecommand{\href}[2]{#2}

\end{document}